\newcommand*{\R}{\ensuremath{\mathbb{R}}}
\newcommand*{\RR}{\ensuremath{\mathbb{R}}}
\newcommand*{\Sph}{\ensuremath{\mathbb{S}}}
\renewcommand*{\epsilon}{\varepsilon}
\newtheorem{theorem}{Theorem}
\newtheorem{definition}[theorem]{Definition}
\newtheorem{corollary}[theorem]{Corollary}
\newtheorem{lemma}[theorem]{Lemma}
\newtheorem{fact}[theorem]{Fact}
\newtheorem{remark}[theorem]{Remark}
\newtheorem{example}[theorem]{Example}
\begin{document}
\title[The closed graph function from plane ...]{The closed graph function from plane with the network of discontinuity points}
\author{Michał Stanisław Wójcik}

\begin{abstract}
The main result of this paper states, that if a function $f:\R^2\to [0, +\infty)$ has a closed graph and the set of discontinuity points is a network (as defined by Kuratowski in Topology II, 61.IV), then the graph of $f$ is disconnected. It is also proven that this result can be easily generalised to a function $f:\R^2\to Y$ where $Y$ is a $\sigma$-locally compact metrisable space.
\end{abstract}

\maketitle

\subsection*{Motivation}
It is known that, for a function $f:\R\to \R$ with a closed graph, a sufficient and necessary condition for being continuous is the connectedness of the graph \cite{Bur}. In 2001, Michał R. Wójcik and I stated the question whether this result can be extended to $f:\R^2\to\R$ \cite[9]{wojcik1} – this problem was then propagated by Cz. Ryll-Nardzewski. The answer to this question is negative. The first known discontinuous function $f:\R^2\to\R$ with a connected and closed graph was shown by J. Jel\'{\i}nek in \cite{JL}. The set of discontinuity points in the Jel\'{\i}nek construction is some kind of fractal with infinitely many connected components of its complement.

\subsection*{Result}
The main result of this paper states that if a function $f:\R^2\to [0, +\infty)$ has a closed graph and the set of discontinuity points is a network (Definition \ref{network}), then the graph of $f$ is disconnected. This result is formulated as Theorem \ref{discon}. It follows from Lemma \ref{too_many}, which is a clue point of the paper. The generalisation of Theorem \ref{discon} is possible due to Theorem \ref{di_gen} and is formulated as Corollary \ref{big_hit}.

\section{Notation and terminology}

\begin{definition}
Let $X,Y$ be topological spaces and $f:X\to Y$ be an arbitrary function.
\begin{enumerate} 
\item We will denote by $C(f)$ or $C_f$ the set of all points of continuity,
\item by $D(f)$ or $D_f$ -- set of all points of discontinuity. 
\item For $A\subset X$, by $f|A$ we will denote a restriction of $f$ to the subdomain $A$.
\end{enumerate} 
\end{definition}

In the context of a function $f:X\to Y$, we will not use a separate symbol to denote the graph of $f$, for $f$ itself, in terms of Set Theory, is a graph. So when we use Set Theory operations and relations with respect to $f$, they should be understood as operations and relations with respect to the graph. Whenever this naming convention might be confusing, we will add the word “graph”, e.g. “$f$ has a closed graph”.

\begin{definition}
Let $(X,d)$ be a metric space.\\
$B(x_0,r) = \{x\in X:d(x_0,x)<r\}$,\\
$\hat{B}(x_0,r) = \{x\in X:d(x_0,x)\leq r\}$,\\
$S(x_0,r) = \{x\in X:d(x_0,x)=r\}$.
\end{definition} 

\section{Functions with a closed graph}

At the beginning I will cite some results concerning closed graph functions:

\begin{theorem}\label{fcon}
If $X$ is a topological space, $Y$ is a compact space, $f:X\to Y$ and the graph of $f$ is closed, then $f$ is continuous.
\end{theorem}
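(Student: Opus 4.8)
The plan is to exploit the compactness of $Y$ through the classical fact that the coordinate projection $\pi_X\colon X\times Y\to X$ is a closed map whenever $Y$ is compact (the tube lemma, i.e.\ Kuratowski's theorem on closed projections). Write $G\subset X\times Y$ for the graph of $f$ and let $\pi_X,\pi_Y$ be the two coordinate projections of $X\times Y$. Since $f$ is a function, $\pi_X|G\colon G\to X$ is a continuous bijection whose inverse is the map $g\colon X\to G$, $g(x)=(x,f(x))$, and $f=\pi_Y\circ g$. Thus it suffices to show that $g$ is continuous, equivalently that $\pi_X|G$ is a homeomorphism.

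First I would recall (or prove) that $\pi_X\colon X\times Y\to X$ is closed. Given closed $F\subset X\times Y$ and $x_0\notin\pi_X(F)$, for each $y\in Y$ choose a basic open set $U_y\times V_y\ni(x_0,y)$ disjoint from $F$; the $V_y$ cover the compact space $Y$, so finitely many $V_{y_1},\dots,V_{y_n}$ suffice, and $U:=\bigcap_{i=1}^{n}U_{y_i}$ is a neighbourhood of $x_0$ with $(U\times Y)\cap F=\emptyset$, hence $U\cap\pi_X(F)=\emptyset$. So $\pi_X(F)$ is closed.

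Next, since $G$ is closed in $X\times Y$ by hypothesis, any set closed in $G$ is closed in $X\times Y$, so its image under $\pi_X$ is closed in $X$; that is, $\pi_X|G$ is a closed map. A continuous closed bijection is a homeomorphism (for closed $C$ in the domain, the preimage of $C$ under the inverse map equals $\pi_X(C)$, which is closed), so $g=(\pi_X|G)^{-1}$ is continuous and therefore $f=\pi_Y\circ g$ is continuous.

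The only substantive point is the closedness of $\pi_X$, which is precisely where compactness of $Y$ is used; everything else is formal. If one prefers to avoid projection language, the same content can be packaged as a direct net argument: were $f$ discontinuous at $x_0$, there would be a net $x_\alpha\to x_0$ with $f(x_\alpha)$ eventually outside some neighbourhood $V$ of $f(x_0)$; compactness of the closed set $Y\setminus V$ yields a subnet with $f(x_\alpha)\to y\in Y\setminus V$, whence $(x_\alpha,f(x_\alpha))\to(x_0,y)\in G$ forces $y=f(x_0)\in V$, a contradiction. This is the tube-lemma proof in disguise, with the net characterisation of compactness doing the work.
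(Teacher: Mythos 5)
Your proof is correct: the closed-projection (tube lemma) argument and the net argument you sketch are both complete and standard, and compactness of $Y$ is used exactly where it must be. The paper does not supply its own proof of this theorem --- it only cites \cite[T2]{wojcik1} --- so there is nothing to compare against; your write-up is a valid self-contained substitute for that reference.
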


(for proof: e.g. \cite[T2]{wojcik1})

\begin{theorem}\label{dense_con}
If $X$ is a Bair and Hausdorff space, $Y$ is a $\sigma$-locally compact space, $f:X\to Y$ and the graph of $f$ is closed, then C(f) is an open and dense subset of $X$.
\end{theorem}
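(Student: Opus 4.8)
\emph{Proof plan.} The plan is to reduce everything to Theorem~\ref{fcon} by localisation, using the hypothesis on $Y$ in two ways: first, that $Y$ admits a cover $Y=\bigcup_{n\in\N}K_n$ by countably many compact sets, which I will use for density; second, that $Y$ is locally compact (and Hausdorff, as is automatic when $Y$ is metrisable), which I will use for openness. I will use freely that a restriction or corestriction of $f$ is again a closed-graph function: if $U\subseteq X$ is open then the graph of $f|U$ is $f\cap(U\times Y)$, hence closed in $U\times Y$, and if $K\subseteq Y$ contains $f(U)$ then the graph of $f|U\colon U\to K$ is closed in $U\times K$. In particular, whenever a restriction of $f$ has its range inside a compact subset of $Y$, Theorem~\ref{fcon} shows that restriction to be continuous.

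\emph{Openness.} I would show that $C(f)$ equals the set of those $x\in X$ having an open neighbourhood $U$ with $\clo{f(U)}$ compact; this set is open by construction. The inclusion ``$\supseteq$'' is Theorem~\ref{fcon} applied to $f|U\colon U\to\clo{f(U)}$, which gives $U\subseteq C(f)$. For ``$\subseteq$'', if $x\in C(f)$ then, $Y$ being locally compact, $f(x)$ has a compact neighbourhood $N$, and by continuity of $f$ at $x$ there is an open $U\ni x$ with $f(U)\subseteq N$, so $\clo{f(U)}\subseteq N$ is compact. Hence $C(f)$ is open.

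\emph{Density.} Let $W\subseteq X$ be nonempty and open; being an open subspace of a Baire space, it is itself Baire. From $W=\bigcup_{n}\bigl(W\cap f^{-1}(K_n)\bigr)$ and the fact that $W$ is non-meagre in itself, some $A:=W\cap f^{-1}(K_{n_0})$ is not nowhere dense, so its closure in $W$ has nonempty interior $V$, and $A$ is dense in $V$. The key step is to verify that $f(V)\subseteq K_{n_0}$: given $x\in V$, choose a net $(x_i)$ in $A$ converging to $x$; since each $f(x_i)$ lies in the compact set $K_{n_0}$, a subnet of $\bigl(f(x_i)\bigr)$ converges to some $y\in K_{n_0}$, and then the matching subnet of $\bigl(x_i,f(x_i)\bigr)$ converges to $(x,y)$, which must therefore belong to the closed graph of $f$; hence $f(x)=y\in K_{n_0}$. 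Now $f|V\colon V\to K_{n_0}$ has a closed graph and compact range, so Theorem~\ref{fcon} makes it continuous, and since $V$ is open and contained in $W$ we get $\emptyset\ne V\subseteq C(f)\cap W$. Thus $C(f)$ meets every nonempty open set, i.e. $C(f)$ is dense.

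\emph{Main obstacle.} The one substantive point is the implication ``$A$ dense in $V$ $\Rightarrow$ $f(V)\subseteq K_{n_0}$'', where the closed graph and the compactness of $K_{n_0}$ must be used together (through the subnet argument above, which needs no countability assumption on $X$); this is also exactly why the openness half requires $Y$ to be locally compact and not merely $\sigma$-compact.
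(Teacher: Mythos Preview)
The paper does not supply its own proof of this theorem; it merely cites Dobo\v{s}~\cite{Dob}. Your argument is correct and is essentially the standard one, so there is nothing substantive to compare against.

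Two minor remarks. First, your ``main obstacle'' dissolves once you note that a closed-graph function pulls back compact sets to closed sets (your subnet argument is precisely the proof of this fact, applied pointwise). Hence $A=W\cap f^{-1}(K_{n_0})$ is already closed in $W$, so $V=\mathrm{int}_W(A)\subseteq A$ and $f(V)\subseteq K_{n_0}$ immediately; the density half then needs only the Baire property and Theorem~\ref{fcon}. Second, you parenthetically invoke metrisability of $Y$ to secure Hausdorffness, but the statement here does not assume $Y$ metrisable; Hausdorffness of $Y$ is, however, implicit in the intended meaning of ``$\sigma$-locally compact'' in this paper (cf.\ Theorem~\ref{wiliam}) and is indeed used in your openness argument when you pass from a compact neighbourhood of $f(x)$ to a closed set containing $f(U)$.
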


(for proof: e.g. \cite[T2]{Dob})

\begin{lemma}\label{continuous_factory}
If $X$ is topological space, $Y$ is a locally compact space, ${f \colon X \to Y}$ has a closed graph, 
$A \subset X$, $x_0 \in A$, $f|A$
is continuous at $x$, and for each open neighbourhood $U$ of $x_0$,
there exists an open neighbourhood $G$ of $x_0$, such that for any $y \in G$
there exists $E \subset U$ such that $y \in E$, the image $f(E)$ is connected
and $E \cap A \not= \emptyset$, then $f$ is continuous at $x_0$.
\end{lemma}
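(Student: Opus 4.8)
The plan is to reduce the statement to a local boundedness assertion and then apply Theorem~\ref{fcon}. By local compactness, fix a compact neighbourhood $K$ of $f(x_0)$, which we may take to be closed, and set $V:=\operatorname{int}K$, so $f(x_0)\in V$. Since $f|A$ is continuous at $x_0$, choose an open neighbourhood $U_1$ of $x_0$ with $f(U_1\cap A)\subseteq V$. The heart of the argument is the claim that \emph{some open neighbourhood $W$ of $x_0$ satisfies $f(W)\subseteq K$.} Granting this, the graph of $f|W\colon W\to K$ is $\Gamma(f)\cap(W\times Y)=\Gamma(f)\cap(W\times K)$, hence closed in $W\times K$; as $K$ is compact, Theorem~\ref{fcon} makes $f|W$ continuous, and since $W$ is a neighbourhood of $x_0$ this yields continuity of $f$ at $x_0$.

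To prove the claim I would argue by contradiction: suppose $f(W)\not\subseteq K$ for every open neighbourhood $W$ of $x_0$. Direct the open neighbourhoods $W$ of $x_0$ with $W\subseteq U_1$ by reverse inclusion. Given such a $W$, apply the hypothesis with $U:=W$ to obtain an open neighbourhood $G_W$ of $x_0$ with the stated property. Since $G_W$ is an open neighbourhood of $x_0$, the contradiction assumption gives a point $y_W\in G_W$ with $f(y_W)\notin K$; fix the corresponding $E_W\subseteq W$ with $y_W\in E_W$, $f(E_W)$ connected and $E_W\cap A\neq\emptyset$, and pick $a_W\in E_W\cap A$. Then $a_W\in W\cap A\subseteq U_1\cap A$, so $f(a_W)\in V$.

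Now $f(E_W)$ is connected and meets both $\operatorname{int}K$ (at $f(a_W)$) and $Y\setminus K$ (at $f(y_W)$); since $K$ is closed, the disjoint relatively open sets $f(E_W)\cap\operatorname{int}K$ and $f(E_W)\setminus K$ would separate $f(E_W)$ unless $f(E_W)$ also meets $\partial K:=K\setminus\operatorname{int}K$. So there is $e_W\in E_W$ with $f(e_W)\in\partial K$, and $e_W\in E_W\subseteq W$. The net $(e_W)$ converges to $x_0$, while $(f(e_W))$ stays in the compact set $\partial K$ (a closed subset of $K$), so passing to a subnet we may assume $f(e_W)\to z\in\partial K$. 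Then $(e_W,f(e_W))\to(x_0,z)$, and since the graph of $f$ is closed, $z=f(x_0)$; but $f(x_0)\in V$ while $z\in\partial K=K\setminus V$, a contradiction. This proves the claim and hence the lemma.

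I expect the main obstacle to be exactly the point that forces the argument to be a limiting one rather than a one-step computation: connectedness of $f(E_W)$ does not by itself keep $f(E_W)$ inside $K$, so one cannot simply conclude $f(y_W)\in K$. The device that gets around this is to extract, for each $W$, not the escaping point but a point of $E_W$ whose image lands on the compact boundary $\partial K$; compactness of $\partial K$ together with the closed graph then produces the contradiction in the limit. Secondary care is needed only for standard point-set bookkeeping — verifying that $\Gamma(f|W)$ is closed in $W\times K$, that the chosen neighbourhood net converges to $x_0$, and (in a non-Hausdorff $Y$) that one may take $K$ closed — none of which should present real difficulty.
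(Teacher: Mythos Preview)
Your argument is correct. The key steps --- using continuity of $f|A$ to trap $f(a_W)$ inside $\operatorname{int}K$, using connectedness of $f(E_W)$ to force a point on the compact frontier $\partial K$, and then invoking the closed graph along a convergent subnet --- are all sound, and the reduction to Theorem~\ref{fcon} via $f|W\colon W\to K$ is clean. The only genuine side assumption you need is that $K$ can be taken closed (equivalently, that $Y$ is locally compact Hausdorff); you flag this yourself, and it is harmless here since every application in the paper has $Y=[0,+\infty)$.

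As for comparison: the paper does not actually prove this lemma --- its ``proof'' is a bare citation to \cite[T2]{wojcik1}. So there is no internal argument to weigh yours against. For what it is worth, your approach is the standard one for this type of statement and is almost certainly what the cited reference does: show local boundedness (values trapped in a compact set) and then appeal to the compact-codomain case. Your use of the boundary $\partial K$ to manufacture a convergent subnet in the codomain is exactly the device that makes the limiting argument work.
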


\begin{proof}
\cite[T2]{wojcik1}
\end{proof}

\section{locally connected accessability}

\begin{definition}\label{algebra}
Let $X$ be a topological space, $G(X)$ be a family of all open sets in $X$ and $f:X\to [0, +\infty)$, $A\subset X$.\\
$A^f_\infty = \{x\in Clo(A):\forall_{M>0}\forall_{U\in G(X):x\in U}\exists_{y\in U\cap A} f(y) > M\}$\\
$A^f_0 = Clo(A)\setminus A^f_\infty$.
\end{definition}

There are two quick conclusions from the above definition.

\begin{corollary}
If $f:X\to [0, +\infty)$ and $A\subset X$, then $A^f_\infty$ is closed and $A^f_\infty\subset D_f$ and $A^f_0$ is open in a relative topology of $Clo(A)$.
\end{corollary}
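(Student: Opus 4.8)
The plan is to prove the three assertions in the order: closedness of $A^f_\infty$, then the inclusion $A^f_\infty\subset D_f$, and finally to read off the openness of $A^f_0$ in the relative topology of $Clo(A)$ as an immediate consequence of the first assertion, since $A^f_0=Clo(A)\setminus A^f_\infty=Clo(A)\cap(X\setminus A^f_\infty)$ is the trace on $Clo(A)$ of a set open in $X$.

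For the closedness of $A^f_\infty$ I would show that $X\setminus A^f_\infty$ is open. Pick $x\notin A^f_\infty$. If $x\notin Clo(A)$, the open set $X\setminus Clo(A)$ is a neighbourhood of $x$ disjoint from $A^f_\infty$, because $A^f_\infty\subset Clo(A)$. Otherwise $x\in Clo(A)$ and the defining property of $A^f_\infty$ fails at $x$, so there are $M>0$ and an open $U\ni x$ with $f(y)\le M$ for every $y\in U\cap A$. The point to notice is that the same pair $(M,U)$ disqualifies every $z\in U$ from membership in $A^f_\infty$: $U$ is an open neighbourhood of each such $z$, and on $U\cap A$ the function $f$ is bounded by $M$. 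Hence $U\subset X\setminus A^f_\infty$, which is therefore open.

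For $A^f_\infty\subset D_f$ I would proceed by contraposition. Suppose $f$ is continuous at $x$; then either $x\notin Clo(A)$, so trivially $x\notin A^f_\infty$, or $x\in Clo(A)$. In the latter case, since $[0,f(x)+1)$ is an open neighbourhood of $f(x)$ in $[0,+\infty)$, there is an open $U\ni x$ with $f(U)\subset[0,f(x)+1)$; in particular $f(y)<f(x)+1$ for every $y\in U\cap A$, and here $U\cap A\neq\emptyset$ precisely because $x\in Clo(A)$. Putting $M=f(x)+1$, which is positive as $f\ge 0$, we see that $x$ violates the defining condition of $A^f_\infty$, so $x\notin A^f_\infty$. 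In both cases continuity at $x$ forces $x\notin A^f_\infty$, which is the desired inclusion.

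I do not anticipate a real obstacle; the only care needed is to keep the topological (non-metric, non-first-countable) setting in mind — continuity must be treated via open neighbourhoods of $f(x)$ rather than sequences — and to remember, each time one quantifies over $y\in U\cap A$, that this set is nonempty exactly because the relevant point lies in $Clo(A)$.
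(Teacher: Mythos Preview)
Your proof is correct and is precisely the natural unpacking of Definition~\ref{algebra}; the paper itself gives no proof for this corollary, presenting it as an immediate consequence of the definition, so there is nothing to compare beyond noting that you have supplied the details the paper leaves to the reader. One minor remark: in the second part the observation that $U\cap A\neq\emptyset$ is true but not needed for the logic, since the negation of ``$\exists\, y\in U\cap A$ with $f(y)>M$'' holds vacuously if $U\cap A=\emptyset$.
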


\begin{corollary}\label{easy_con}
If $f:X\to [0, +\infty)$ has a closed graph, $A\subset X$, then $f|Clo(A)$ is continuous on $A^f_0$.
\end{corollary}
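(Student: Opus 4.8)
The plan is to reduce the statement to the compact-range situation, i.e.\ to Theorem \ref{fcon}. Fix $x_0 \in A^f_0$; I must show that $f|Clo(A)$ is continuous at $x_0$. Since $x_0 \notin A^f_\infty$, by Definition \ref{algebra} there are a bound $M > 0$ and an open neighbourhood $U$ of $x_0$ in $X$ such that $f(y) \le M$ for every $y \in U \cap A$, that is, $f(U \cap A) \subset [0, M]$.

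The first real step is to propagate this bound to all of $Clo(A) \cap U$, i.e.\ to show $f(Clo(A) \cap U) \subset [0, M]$. I would first note that for open $U$ one has $Clo(A) \cap U \subset Clo(A \cap U)$: if $z \in Clo(A) \cap U$ and $W$ is any open neighbourhood of $z$, then $W \cap U$ is an open neighbourhood of $z$, hence meets $A$, so $W$ meets $A \cap U$. Now take any $z \in Clo(A) \cap U$ and a net $(a_\alpha)$ in $A \cap U$ with $a_\alpha \to z$. The values $f(a_\alpha)$ all lie in the compact set $[0, M]$, so some subnet $f(a_{\alpha_\beta})$ converges to a point $y \in [0, M]$. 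Then $(a_{\alpha_\beta}, f(a_{\alpha_\beta})) \to (z, y)$ in $X \times [0, +\infty)$, and since the graph of $f$ is closed and $f$ is single-valued, $y = f(z)$; hence $f(z) \le M$.

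Set $V = Clo(A) \cap U$, which is a relatively open neighbourhood of $x_0$ in $Clo(A)$. By the previous step the restriction $g = f|V$ maps $V$ into the compact interval $[0, M]$, and its graph is $f \cap (V \times [0, M])$, which is closed in $V \times [0, M]$ because the graph of $f$ is closed in $X \times [0, +\infty)$ and $[0, M]$ is closed there. Theorem \ref{fcon}, applied with the range $[0, M]$, then yields that $g$ is continuous on all of $V$, in particular at $x_0$. Since $V$ is a neighbourhood of $x_0$ in $Clo(A)$ and $g = f|Clo(A)$ on $V$, the function $f|Clo(A)$ is continuous at $x_0$, which is what we wanted.

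I expect the only genuine subtlety to be the middle step — carrying the bound from $A \cap U$ over to all of $Clo(A) \cap U$ — where one must use the closedness of the graph together with the compactness of $[0, M]$ to prevent the value of $f$ from ``escaping to infinity'' at limit points of $A$ lying outside $A$; everything else is routine bookkeeping with subspace topologies. (One could instead invoke Lemma \ref{continuous_factory}, but the direct reduction to Theorem \ref{fcon} seems cleaner.)
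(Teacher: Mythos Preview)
Your proof is correct and follows essentially the same route as the paper: obtain a bound $M$ and a neighbourhood $U$ from the negation of $x_0\in A^f_\infty$, then apply Theorem~\ref{fcon} to $f|U\cap Clo(A)$ with compact range $[0,M]$. In fact you are more careful than the paper, which simply asserts that $f(U\cap Clo(A))\subset[0,M]$ ``by Definition'' --- whereas the definition only bounds $f$ on $U\cap A$, and the extension to $U\cap Clo(A)$ genuinely needs the closed-graph argument you supply.
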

\begin{proof}
By Definition, for any $x\in A^f_0$ there exists a real number $M>0$ and an open neighbourhood $U$ of $x$, such that $f|U\cap Clo(A) :U\cap Clo(A)\to[0,M]$. By Theorem \ref{fcon},  $f|U\cap Clo(A)$ is continuous. Since $U$ is open, $f|Clo(A)$ is continuous at $x$.
\end{proof}

The above terminology looks a bit helpless. But if we join it with some information about how a given point is located relativelly to the set of continuity points, we will get a convinient tool for finding interesting properietes of closed graph functions.

\begin{definition}\label{access}
Let $X$ be a topological space and $A\subset X$. We will say that $x$ is locally connectedly accessible from $A\subset X$, iff for each open neighbourhood $U$ of $x$ there is an open neighbourhood $G$ of $x$, such that $G\subset U$ and for each $u,v\in G\cap A$ there is a connected set $E$, such that $u,v\in E\subset U\cap A$.\\
We will say that the set $E\subset X$ is locally connectedly accessible from A, iff each point of $E$ is locally connectedly accessible from $A$.  
\end{definition}

Obviously if $x$ is locally connectedly accessible from $A$, then $x\in Clo(A)$.

\begin{example}\label{trivial_example}
Let $B\subset R^n$ be the unit ball. Let $x\in \partial B$ and $V$ be an arbitrary open set that contains $x$. Then for $A=V\cap Int B$, $x$ is locally connectedly accessible from $A$. 
\end{example}
\begin{proof}
Take any open neighbourhood $U$ of $x$. Choose such $\delta > 0$, that $B(x,\delta)\subset U\cap V$. Put $G=B(x,\delta)$. Take any $u,v\in A\cap G$. Since $G$ and $IntB$ are convex, $G\cap B$ is convex as well, so the interval $[u,v]\subset G\cap IntB$. But $G\cap IntB = G\cap A$. Put $E=[u,v]$. Obviously $E\subset U\cap A$. Thus by Definition \ref{access}, $x$ is locally connectedly accessible from $A$.
\end{proof}

For $x\in A^f_\infty$, generally, we know only that there exists at least one net $A\ni x_n\to x$ such that $f(x_n)\to+\infty$. But if we assume additionally that the graph of $f$ is closed and $x$ is locally connectedly accesible from $A$, then we can expect that for each net $A\ni x_n\to x$, $f(x_n)\to+\infty$.

\begin{theorem}\label{exploding}
If $f:X\to [0, +\infty)$ has a closed graph, $A\subset C_f$, $x$ is locally contectedly accessible from $A$ and $x\in A^f_\infty$, then
$$\lim\limits_{A\ni u\to x}f(u) = +\infty.$$ 
\end{theorem}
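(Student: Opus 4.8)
The plan is to argue by contradiction, using local connected accessibility as a device that lets $f$ realise every intermediate value arbitrarily close to $x$, and then letting the closed graph pin down $f(x)$. Suppose the conclusion fails. Then there is a constant $M_0>0$ such that every open neighbourhood $U$ of $x$ contains a point $u\in U\cap A$ with $f(u)\le M_0$; note that $U\cap A\neq\emptyset$ automatically, since local connected accessibility forces $x\in Clo(A)$. Fix now an arbitrary real number $c>M_0$. The goal of the main part of the argument is to prove $f(x)=c$; this is absurd as soon as we are allowed to take, say, $c=M_0+1$ and $c=M_0+2$.

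For the sliding step, let $U$ be an arbitrary open neighbourhood of $x$. By Definition \ref{access} choose an open neighbourhood $G\subset U$ of $x$ such that any two points of $G\cap A$ lie in a common connected subset of $U\cap A$. By the choice of $M_0$ there is $u\in G\cap A$ with $f(u)\le M_0$, and since $x\in A^f_\infty$ (apply Definition \ref{algebra} to the neighbourhood $G$ and the bound $c>0$) there is $v\in G\cap A$ with $f(v)>c$. Pick a connected set $E$ with $u,v\in E\subset U\cap A$. Because $A\subset C_f$, the map $f$ is continuous at every point of $E$, hence $f|E$ is continuous, so $f(E)$ is a connected subset of $[0,+\infty)$, i.e.\ an interval; since $f(u)\le M_0<c<f(v)$ we conclude $c\in f(E)$. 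Choosing $z\in E$ with $f(z)=c$ gives a point $z\in U$ with $f(z)=c$.

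As $U$ was arbitrary, every open neighbourhood of $x$ meets $f^{-1}(\{c\})$, i.e.\ $x\in Clo(f^{-1}(\{c\}))$. Hence $(x,c)$ lies in the closure of $f^{-1}(\{c\})\times\{c\}\subset f$, and since the graph of $f$ is closed, $(x,c)\in f$, that is $f(x)=c$. Letting $c$ run over two distinct values above $M_0$ yields a contradiction, which proves the theorem. The only point requiring care is that $X$ is merely a topological space, so there are no sequences to exploit and one must phrase everything in terms of arbitrary neighbourhoods of $x$; one must also be attentive that the connecting set $E$ is contained in $A$ (not merely in $U$), so that $f|E$ genuinely inherits continuity from $A\subset C_f$. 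The conceptual content is otherwise elementary.
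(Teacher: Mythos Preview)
Your proof is correct, and it follows a genuinely different route from the paper's.

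The paper argues as follows: if the limit is not $+\infty$, one extracts a net $x_U\to x$ in $A$ along which $f(x_U)\to f(x)$ (using boundedness together with the closed graph), sets $B=\{x_U\}$, and then feeds the local-connected-accessibility hypothesis into Lemma~\ref{continuous_factory} to conclude that $f|(A\cup\{x\})$ is continuous at $x$, contradicting $x\in A^f_\infty$.

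Your argument avoids Lemma~\ref{continuous_factory} entirely. Instead you observe that the connecting set $E\subset A$ carries continuity, so $f(E)$ is an interval, and then you run an intermediate-value argument: every level $c>M_0$ is attained arbitrarily close to $x$, so by the closed graph $f(x)=c$ for every such $c$, which is absurd. This is more elementary and entirely self-contained---it uses nothing beyond the definition of $A^f_\infty$, Definition~\ref{access}, and the closed-graph hypothesis---whereas the paper's version packages the work into the black box of Lemma~\ref{continuous_factory}. The trade-off is that the paper's route generalises more readily: Lemma~\ref{continuous_factory} is stated for arbitrary locally compact codomain $Y$, while your intermediate-value step leans on the order structure of $[0,+\infty)$. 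For the present theorem, though, your approach is cleaner.
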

\begin{proof}
Assume to the contrary that for each open neighbourhood $U$ of $x$, there exists $x_U\in U\cap A$, such that $f(x_U)\to f(x)$ in terms of net convergence. Let $B = \{z_U:U\text{ is an open neighbourhood of } x\}$.
We will prepare to applay Lemma \ref{continuous_factory}. Our space will be in this case $A\cup\{x\}$. Keep in mind that $f|B\cup \{x\}$ is continuous at $x$. Take any open neigbourhood $V$ of $x$. By locally conectedly accessibility, there is an open neighbourhood $G$ of $x$, such that for each $u,v\in G\cap A$ there is a connected set $E$, such that $u,v\in E\subset V\cap A$. Take any $u\in G\cap (A\cup\{x\})$. If $u\not = x$, we have such connected $E$, that $u,x_G\in E\subset A\cap V$. If $u = x$, put $E=\{x\}$. Since $A\subset C_f$, $f(E)$ is connected. Thus by Lemma \ref{continuous_factory}, the function $f|(A\cup\{x\})$ is continuous at $x$. This contradicts to $x\in A^f_\infty$.
\end{proof}

\begin{corollary}\label{col_exp}
If $f:X\to[0,+\infty)$ has a closed graph, $A \subset C_f$ and $x$ is locally contectedly accessible from $A$, and $x\in A^f_\infty \cap Clo(A^f_0)$, then
$$
\lim_{A^f_0\ni u \to x} f(u) = +\infty.
$$
\end{corollary}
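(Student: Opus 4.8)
The plan is to derive Corollary~\ref{col_exp} quickly from Theorem~\ref{exploding} together with Corollary~\ref{easy_con}; I do not expect to need any new idea. The one point that requires care is that $A^f_0$ is a subset of $Clo(A)$ and not of $A$, so Theorem~\ref{exploding} cannot be applied verbatim --- I will have to transport the ``explosion'' that is already known along $A$ out to the slightly larger set $A^f_0$.

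First I would fix $M>0$. The hypotheses $A\subset C_f$, $x$ locally connectedly accessible from $A$, and $x\in A^f_\infty$ are exactly those of Theorem~\ref{exploding}, so it applies and yields $\lim_{A\ni u\to x}f(u)=+\infty$; unwinding this, I would choose an open neighbourhood $U$ of $x$ with $f(u)>M$ for every $u\in U\cap A$. Next, for an arbitrary $v\in U\cap A^f_0$, I would use $A^f_0\subset Clo(A)$ to pick a net $A\ni a_\alpha\to v$; since $U$ is open and contains $v$, this net is eventually in $U$, so $f(a_\alpha)>M$ eventually. By Corollary~\ref{easy_con}, $f|Clo(A)$ is continuous at every point of $A^f_0$, in particular at $v$, so $f(a_\alpha)\to f(v)$, whence $f(v)\ge M$. (Should I wish to avoid citing Corollary~\ref{easy_con}: by the definition of $A^f_0$ the function $f$ is bounded on a neighbourhood of $v$ intersected with $A$, so $f(a_\alpha)$ eventually lies in a compact interval $[M,M']$, some subnet converges, and closedness of the graph of $f$ forces the limit to be $f(v)\ge M$.)

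To finish, I would note that $M$ was arbitrary: replacing $M$ by $M+1$ gives, for every $M$, an open neighbourhood of $x$ on which $f\ge M$ throughout $A^f_0$, which is exactly the assertion $\lim_{A^f_0\ni u\to x}f(u)=+\infty$. I would also remark that $x\notin A^f_0$ (because $x\in A^f_\infty$ and $A^f_0=Clo(A)\setminus A^f_\infty$), so the punctured limit is unambiguous, and that if $x\notin Clo(A^f_0)$ the conclusion is vacuous --- so the hypothesis $x\in Clo(A^f_0)$ merely guarantees the statement is non-trivial. The main (indeed the only) obstacle is the middle step, the passage from $A$ to $A^f_0\subset Clo(A)$; with Corollary~\ref{easy_con} in hand it is routine.
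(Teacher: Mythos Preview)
Your argument is correct and is precisely the kind of short derivation the paper has in mind: the corollary is stated without proof immediately after Theorem~\ref{exploding}, and the intended route is exactly to invoke Theorem~\ref{exploding} on $A$ and then push the conclusion out to $A^f_0$ via the continuity of $f|Clo(A)$ on $A^f_0$ (Corollary~\ref{easy_con}). Your observation that $x\notin A^f_0$ and that the hypothesis $x\in Clo(A^f_0)$ only serves to make the limit non-vacuous is also apt.
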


\begin{theorem}\label{neither_or}
If $f:X\to [0,+\infty)$ has a closed graph, $E$ is a connected subset of $X$, $f|E$ is continuous and $E$ is locally connectedly accessible from $A\subset X$, then either $E\subset A^f_0$ or $E\subset A^f_\infty$.
\end{theorem}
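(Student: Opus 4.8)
The plan is to show that the set $E \cap A^f_0$ is both open and closed in the relative topology of $E$; since $E$ is connected and $E \subset \Clo(A)$ (because $E$ is locally connectedly accessible from $A$), this forces $E \cap A^f_0$ to be either all of $E$ or empty, and the two cases give exactly the two alternatives in the statement (using $\Clo(A) = A^f_0 \cup A^f_\infty$).

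First I would record that $E \subset \Clo(A)$: every point of $E$ is locally connectedly accessible from $A$, and as noted right after Definition \ref{access} this implies membership in $\Clo(A)$. Hence for each $x \in E$ exactly one of $x \in A^f_0$, $x \in A^f_\infty$ holds, so $E$ is partitioned by $E \cap A^f_0$ and $E \cap A^f_\infty$. It therefore suffices to prove that $E \cap A^f_0$ is open in $E$, because by the same argument applied to the (symmetric-looking) set $A^f_\infty$ — or simply by taking complements within $E$ — it is enough to know one of the two pieces is clopen. Actually the cleanest route is to show $E \cap A^f_0$ is open in $E$ \emph{and} that $E \cap A^f_\infty$ is open in $E$; then both are clopen and connectedness finishes it.

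For openness of $E \cap A^f_0$ in $E$: the first Corollary after Definition \ref{algebra} tells us $A^f_0$ is open in the relative topology of $\Clo(A)$, hence $E \cap A^f_0$ is open in $E \cap \Clo(A) = E$. That half is immediate and costs nothing. The real content is openness of $E \cap A^f_\infty$ in $E$. Here is where I expect the main obstacle. Take $x \in E \cap A^f_\infty$. I want a neighbourhood $W$ of $x$ with $W \cap E \subset A^f_\infty$. Suppose not: then there is a net $E \ni x_n \to x$ with each $x_n \in A^f_0$. By Theorem \ref{exploding}, since $A \subset C_f$, $x$ is locally connectedly accessible from $A$, and $x \in A^f_\infty$, we have $\lim_{A \ni u \to x} f(u) = +\infty$. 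I'd like to contradict this by producing points of $A$ near $x$ on which $f$ stays bounded, using that $f|E$ is continuous at $x$ (so $f$ is bounded near $x$ along $E$) together with the local connected accessibility to "interpolate" from $E$ into $A$ — essentially re-running the Lemma \ref{continuous_factory} argument from the proof of Theorem \ref{exploding}, but now feeding it the continuity of $f$ along $E$ at $x$ rather than along some auxiliary set $B$.

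More precisely: apply Lemma \ref{continuous_factory} with the ambient restriction $f|(A \cup E)$ (or $f|(A \cup \{x\})$ after first pushing points of $E$ into $A$ via connected sets). The hypotheses to check are that $f|(A\cup E)$ — or the relevant subset — is continuous at $x$, which follows from continuity of $f|E$ at $x$, and the connected-interpolation condition, which follows by combining local connected accessibility of $x$ from $A$ with connectedness of $f|E$-images; the sets $E$ produced in Definition \ref{access} lie in $A \subset C_f$ so their $f$-images are connected, and a small connected arc inside $E$ through $x$ handles the point $x$ itself. Lemma \ref{continuous_factory} then yields continuity of $f$ at $x$ along $A \cup E$, contradicting $x \in A^f_\infty$. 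The delicate point is bookkeeping the two sources of connected sets (those inside $E$ near $x$, and those inside $A$ from accessibility) and checking the images glue to something connected containing both $f(x)$ and the test value — but this is precisely the pattern already executed in the proof of Theorem \ref{exploding}, so it should go through. Once $E \cap A^f_\infty$ is shown open in $E$, both pieces are clopen, connectedness of $E$ gives the dichotomy, and we are done.
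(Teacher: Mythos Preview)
Your overall strategy --- partition $E$ into $E\cap A^f_0$ and $E\cap A^f_\infty$, note the first is open in $E$ trivially, then argue the second is also open in $E$ so that connectedness of $E$ forces one piece to be empty --- is exactly the paper's. The paper likewise obtains openness of $E\cap A^f_\infty$ by taking a boundary point $x$ of that set in $E$ and contradicting Theorem~\ref{exploding} via points of $A$ near $x$ on which $f$ stays bounded; your first intuition (``producing points of $A$ near $x$ on which $f$ stays bounded, using that $f|E$ is continuous at $x$'') is on the mark.

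The gap is in how you then propose to manufacture those bounded points. You want to re-run Lemma~\ref{continuous_factory} on the ambient space $A\cup E$ with the ``continuous subset'' equal to $E$. But the connected-interpolation hypothesis of that lemma fails here: for $y\in G\cap A$, local connected accessibility hands you a connected set $F\subset U\cap A$ through $y$, and there is no reason for $F$ to meet $E$ --- in general $A\cap E$ may well be empty (think of $E$ lying in the boundary of $A$, which is the typical application). Your remark that ``a small connected arc inside $E$ through $x$ handles the point $x$ itself'' does not repair this, since the problematic $y$'s are in $A$, not in $E$. The paper bridges $A$ and $E$ differently and much more cheaply: having chosen $v\in A^f_0\cap E\cap U$ close to $x$, it invokes Corollary~\ref{easy_con} (continuity of $f|\Clo(A)$ on $A^f_0$) to find $u\in A\cap U$ with $|f(u)-f(v)|<1$; continuity of $f|E$ gives $|f(v)-f(x)|<1$, so $|f(u)-f(x)|<2$ with $u\in A$ arbitrarily close to $x$, and this directly contradicts Theorem~\ref{exploding}. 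The ingredient your write-up is missing is precisely Corollary~\ref{easy_con}: it is what lets you transfer the bound on $f$ from the nearby points of $E\cap A^f_0$ over to points of $A$, without ever needing a connected set that joins the two.
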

\begin{proof}
Assume to the contrary that $E\cap A^f_0 \not=\emptyset$ and $E\cap A^f_\infty \not=\emptyset$. Since $E\subset Clo(A)$, $A_0 \cap E$ is open in the relative topology of $E$. By Definition \ref{algebra}, $E\subset A^f_\infty \cup A^f_0$.
We will show that $A_\infty \cap E$ is open in the relative topology of $E$.
Suppose that $A_\infty \cap E$ isn't open in the relative topology of $E$. Take some border point $x$ of $A_\infty \cap E$ (in relative topology). 
Since $f|E$ is continuous, we have such an open neighbourhood $V$ of $x$, that $|f(x) - f(z)| < 1$ for any $z\in V\cap E$. Take any open neighbourhood $U$ of $x$. Since $x$ is a border point of $A_\infty \cap E$ (in relative topology), we have $v\in A^f_0 \cap E\cap (U\cap V)$. By Definition \ref{algebra}, there exists $u\in A\cap (U\cap V)$ such that $|f(u)-f(v)|<1$. Since $|f(u) - f(v)| < 1$, $|f(u) - f(x)| < 2$. Since $U$ was arbitrary chosen, it's not true that $\lim_{A\ni u \to x} f(u) = +\infty$, which contradicts Theorem \ref{exploding}. We showed that $A_\infty \cap E$ is open in the relative topology of $E$. But this contradicts to the connectedness of $E$.
\end{proof}

\section{Skyhooks}
In this section we will focus on functions $f:\R^2\to [0,\infty)$. 


\begin{definition}
Let $f:X\to [0, +\infty)$. $C$ is a skyhook with respect to $f$, iff there is a homeomorphism from the unit circle $h:\Sph_1\to C$ such that the mapping 
\begin{equation}\label{eskyhook}
[0,2\pi)\ni \phi \to f(h(e^{i\phi})) \text{ is continuous}
\end{equation} 
and $\lim\limits_{\phi\to 2\pi}f(h(e^{i\phi}))=+\infty$. $h(e^0)$ is the infinity point of $C$ with respect to $f$. 
\end{definition}

\begin{definition}
Let $f:X\to [0,+\infty)$. $L$ is a semi-skyhook with respect to $f$, iff $L\subset X$, there is a homeomorphism 
$h:[0,1]\to L$ such that the mapping
\begin{equation}
[0,1)\in t \to f(h(t)) \text{ is continuous}
\end{equation} 
and $\lim\limits_{t\to 1}f(h(t))=+\infty$. $h(1)$ is the infinity point of $L$ with respect to $f$. 
\end{definition}

Whenever the context of $f$ is clear we will skip the part ``with respect to $f$''.\\
We will use specific notation for arcs in an arbitrary space $X$. In the given context $[x,y]$ will denote an arc connecting points $x$ and $y$. Since there are usually many different arcs connecting $x$ and $y$, $[x,y]$ will be considered just as a symbol that denotes chosen arc. If we will say about more than one arc connecting points $x$ and $y$, we will use indexes and prims like $[x,y]_1$ or $[x,y]'$. Once a symbol $[x,y]$ is reserved for some arc in given context, $[v,z]$ can be used only for its sub-arcs. The same rule applies to arcs denotes with indexes and prims respectively. Moreover, since arc, by definition, is homeomorphic with the unit interval, we will assume a natural linear order on a given $[x,y]$ such that $x < y$. We will also use $[x,y)=[x,y]\setminus \{y\}$ and analogously $(x,y]$ and $(x,y)$.

\begin{lemma}\label{simple_skyhook_factory}
If $X$ is a Hausdorff space, $f:X\to [0, \infty)$ has an arcwise-connected graph and an arc $[x,x_0]\subset X$ is a semi-skyhook with the infinity point $x_0$, then there exists a skyhook $C$ with the infinity point $x_0$, such that there exists a semi-skyhook $[x',x_0]\subset[x,x_0]$ with the infinity point $x_0$ and $[x',x_0]\subset C$.
\end{lemma}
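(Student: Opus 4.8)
The idea is to manufacture a circle out of the given semi-skyhook by going out toward the infinity point $x_0$ along $[x,x_0]$, and then coming back along a second arc supplied by the arcwise-connectedness of the graph; the delicate issue is that this second arc must meet $[x,x_0]$ only at its endpoints (so that the union is a genuine Jordan curve, i.e.\ a homeomorphic image of $\Sph_1$) and must do so \emph{before} $x_0$ (so that the ``return'' part lies in the bounded part of $f$ and does not itself run off to infinity). First I would fix a strictly increasing sequence of parameter values $t_n \nearrow 1$ under the homeomorphism $h:[0,1]\to[x,x_0]$, so that the points $p_n = h(t_n)$ satisfy $f(p_n)\to +\infty$; passing to a subsequence I can assume $f(p_n)$ is strictly increasing. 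Since the graph of $f$ is arcwise connected, pick an arc $\gamma$ in the graph joining $(p_1,f(p_1))$ to $(p_2,f(p_2))$; projecting to $X$ gives an arc $J$ in $X$ from $p_1$ to $p_2$ along which $f$ is continuous (continuity because $\gamma$ is a graph over $J$ and $X$ is Hausdorff, so the projection restricted to $\gamma$ is a homeomorphism onto $J$).

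Next I would cut down $J$ and the tail of $[x,x_0]$ so that they fit together cleanly. The subarc $[p_1,p_2]\subset[x,x_0]$ and the arc $J$ are two arcs with the same endpoints; their intersection is a compact set containing $\{p_1,p_2\}$. Using the linear orders on the two arcs one can find the ``last'' point $a$ of $[p_1,p_2]$ (in the order from $p_1$) that also lies on $J$ before one first re-enters, and symmetrically a ``first'' point $b$ after it, and then take the subarc $[a,b]\subset J$ together with the subarc $[a,b]\subset[p_1,p_2]$; standard arc surgery (the kind summarized in the notational remarks on sub-arcs above) yields two arcs from $a$ to $b$ that intersect only in $\{a,b\}$, whose union $C_0$ is therefore homeomorphic to $\Sph_1$. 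The point is to arrange that one of these two sub-arcs is a genuine initial segment $[x',x_0']$-type piece of $[x,x_0]$; here we do \emph{not} yet get $x_0$ on the curve, so this $C_0$ is only an auxiliary loop.

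To actually capture the infinity point I would instead run the construction ``at infinity'': rather than joining $p_1$ to $p_2$, I join $p_n$ to $p_{n+1}$ in the graph for every $n$, get arcs $J_n$ in $X$ with $f|J_n$ continuous and $f$ large on $J_n$ for large $n$ (indeed $\min f|J_n\to+\infty$, since $J_n$'s endpoints have $f$-values tending to infinity and — after the same cutting as above — we may assume $J_n$ avoids the fixed bounded set where $f$ is small, because any point with small $f$-value on $J_n$ would, together with the increasing endpoint values, be irrelevant after restricting $J_n$ to a subarc near its far endpoint $p_{n+1}$). Concatenating the ``outward'' pieces of $[x,x_0]$ between the $p_n$'s with the ``return'' arcs $J_n$, pruned to be pairwise non-crossing, produces an arc spiralling toward $x_0$; adjoining $x_0$ as a limit point and checking (Hausdorffness plus $f(h(\cdot))\to+\infty$ along each piece) that the whole thing is a topological circle through $x_0$ on which $f\circ h$ is continuous with the single infinite limit at $x_0$ gives the desired skyhook $C$. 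By construction an initial tail $[x',x_0]$ of the original $[x,x_0]$ survives inside $C$, which is exactly the last clause of the lemma.

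\textbf{Main obstacle.} The crux is the arc surgery: guaranteeing that the pieces glued together intersect only at their shared endpoints, so that the union is genuinely a simple closed curve (or a simple arc) and not merely a continuous image of one. This requires care because the projection $J_n$ of a graph-arc can wiggle back and re-meet $[x,x_0]$ or a previous $J_m$ in a complicated closed set; the fix is the standard ``first/last intersection point'' trimming, applied inductively, using only that each arc is homeomorphic to $[0,1]$ and that $X$ is Hausdorff (so the relevant intersections are compact and the order-theoretic ``first point in $A$'' is attained). The secondary nuisance — ensuring the return arcs do not sabotage the ``$f\to+\infty$ only at $x_0$'' requirement — is handled by passing to far sub-arcs of the $J_n$ where $f$ is forced to be large, which is automatic once $f(p_n)\to+\infty$ and $f|J_n$ is continuous.
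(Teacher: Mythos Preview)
Your approach has a genuine gap and misses the much simpler route the paper takes. The paper connects $(x,f(x))$ to $(x_0,f(x_0))$ \emph{directly} by a single arc in the graph; its projection $[x,x_0]'\subset X$ is an arc on which $f$ is continuous and---being the second coordinate of a compact set---\emph{bounded}. Since $f\to+\infty$ along $[x,x_0)$ near $x_0$, the last point $x'=\sup\{t\in[x,x_0):t\in[x,x_0]'\}$ satisfies $x'<x_0$; closedness of $[x,x_0]'$ gives $x'\in[x,x_0]'$ and $[x',x_0]\cap[x,x_0]'=\{x',x_0\}$, so $C=[x',x_0]\cup[x',x_0]'$ is the desired skyhook. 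One arc, one ``last intersection'' argument.

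Your construction instead builds infinitely many return arcs $J_n$ and tries to splice them into a spiral converging to $x_0$. Two things fail. First, the claim $\min f|J_n\to+\infty$ is unsupported: a graph arc from $(p_n,f(p_n))$ to $(p_{n+1},f(p_{n+1}))$ may dip through points with arbitrarily small second coordinate, and ``restricting to a subarc near $p_{n+1}$'' severs the very connection to $p_n$ you need. Second, even granting large $f$-values on $J_n$, nothing forces the $J_n$ to accumulate at $x_0$ in $X$---they may wander through regions far from $x_0$ where $f$ happens to be large---nor does inductive first/last trimming produce a \emph{simple} concatenation: trimming $J_n$ against $J_{n-1}$ says nothing about intersections with $J_{n-2},J_{n-3},\dots$ or with distant pieces of $[x,x_0]$. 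The step ``adjoin $x_0$ and check it is a circle'' is therefore unjustified. The fix is not more surgery; it is to notice that $(x_0,f(x_0))$ is already a point of the graph and to connect to it once.
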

\begin{proof}
Since the graph of $f$ is arcwise-connected, we have an arc $[x,x_0]'$ such that $f|[x,x_0]'$ is continuous.
Let $x' = sup\{t\in[x,x_0):t\in [x, x_0]'\}$. Such $x'$ exists as $x\in [x,x_0]$. Since $f|[x,x_0]'$ is bounded and $\lim\limits_{[x,x_0)\ni t\to x_0} f(t) = +\infty$, $x'<x_0$. Since $[x,x_0]'$ is closed, $x'\in [x,x_0]'$. Notice that $[x',x_0]\cap[x,x_0]' = \{x', x_0\}$. Therefore $C = [x',x_0]\cup[x',x_0]'$ is homeomorphic with $\Sph_1$. It's trivial to show that $C$ is a skyhook with the infinity point $x_0$.
\end{proof}

Such a semi-skyhook $[x',x_0]$ with the infinity point $x_0$ will be sometimes named a sub-semi-skyhook of $[x,x_0]$, iff $[x',x_0]\subset [x,x_0]$.

\begin{theorem}\label{force_infinity_point}
If $X$ is locally arcwise-connected at $x$, and $Y$ is a locally compact space, $f:X\to Y$ has a closed graph and $D_f=\{x\}$, then there exists some semi-skyhook $[z,x]\subset X$.
\end{theorem}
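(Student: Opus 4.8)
The plan is to argue by contradiction: assume that \emph{no} arc $[z,x]\subset X$ is a semi-skyhook with infinity point $x$, and deduce that $f$ is continuous at $x$, contradicting $D_f=\{x\}$. The proof then has two parts: a reduction showing that $f$ restricted to every arc ending at $x$ is continuous at $x$, and an application of Lemma~\ref{continuous_factory} with $A=\{x\}$.

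For the reduction, note first that since $D_f=\{x\}$, any arc $[z,x]\subset X$ satisfies $[z,x)\subset X\setminus\{x\}=C_f$, so $f|[z,x)$ is automatically continuous; hence such an arc is a semi-skyhook as soon as $f(h(t))$ blows up at $x$, and nothing else can be a semi-skyhook. I claim, conversely, that if $f|[z,x]$ is \emph{discontinuous} at $x$, then $[z,x]$ is a semi-skyhook. To prove this, pass to the one-point compactification $Y^{*}=Y\cup\{\infty\}$ (compact Hausdorff, since $Y$ is locally compact Hausdorff) and consider the cluster set $S=\bigcap_{0\le t<1}\overline{f(h([t,1)))}$, closures taken in $Y^{*}$. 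Each $f(h([t,1)))$ is a continuous image of a connected set, so $S$ is a decreasing intersection of nonempty compact connected subsets of $Y^{*}$, hence nonempty, compact and connected. If $y\in S\cap Y$, there is a net in $[z,x)$ converging to $x$ along which $f$ converges to $y$, so closedness of the graph forces $y=f(x)$; thus $S\subset\{f(x),\infty\}$. Since $Y^{*}$ is Hausdorff, this two-point set is discrete, so connectedness gives $S=\{f(x)\}$ or $S=\{\infty\}$. In the first case $f(h(t))\to f(x)$ (a net in a compact space whose cluster set is a single point converges to that point), i.e. $f|[z,x]$ is continuous at $x$ --- excluded by assumption. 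Hence $S=\{\infty\}$, which means $f(h(t))$ eventually leaves every compact subset of $Y$; for $Y=[0,+\infty)$ this is exactly $\lim_{t\to1}f(h(t))=+\infty$, so $[z,x]$ is a semi-skyhook, contradicting our standing assumption. Therefore $f|[z,x]$ is continuous at $x$ for every arc $[z,x]\subset X$.

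Now I apply Lemma~\ref{continuous_factory} with $A=\{x\}$ and $x_{0}=x$: the restriction $f|\{x\}$ is trivially continuous at $x$, so only the accessibility hypothesis has to be checked. Given an open neighbourhood $U$ of $x$, local arcwise-connectedness at $x$ yields an arcwise-connected neighbourhood $N\subset U$ of $x$; set $G=\mathrm{int}(N)$, an open neighbourhood of $x$. For $y\in G$ with $y\neq x$ take $E=[y,x]\subset N\subset U$: it is connected, contains $x$ (so $E\cap A\neq\emptyset$), and $f(E)$ is connected because $f|E$ is continuous --- on $E\setminus\{x\}\subset C_f$ trivially, and at $x$ by the reduction above; for $y=x$ take $E=\{x\}$. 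Thus the hypotheses of Lemma~\ref{continuous_factory} are met, so $f$ is continuous at $x$, contradicting $x\in D_f$. Hence a semi-skyhook $[z,x]\subset X$ must exist.

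The main obstacle is the reduction step: ruling out oscillation of $f(h(t))$ as $t\to1$ and pinning its limit to $\infty$. This is precisely why one passes to $Y^{*}$ and uses that the cluster set of a continuous curve is connected; it is also the place where $Y$ must be taken Hausdorff (as is implicit in the cited closed-graph theorems). Everything after the reduction is a routine verification of the hypotheses of Lemma~\ref{continuous_factory}.
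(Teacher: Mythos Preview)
Your proof is correct and follows the same contradiction-plus-Lemma~\ref{continuous_factory} strategy as the paper. The only difference is that you justify in detail, via the one-point compactification of $Y$, the implication ``no semi-skyhook $\Rightarrow$ $f|[z,x]$ is continuous for every arc $[z,x]$'', whereas the paper simply asserts this step as the opening line of its proof.
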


\begin{proof} Assume by contradiction that the thesis doesn't hold. Thus for each arc $[z,x]$, $f|[z,x]$ is continuous.
We will apply Lemma \ref{continuous_factory}. Let $A=\{x\}$. Take any open neighbourhood $U$ of $x$. By local arcwise-connectedness at $x$, we have such an open neighbourhood $G$ of $x$, such that for each $x\in G$, $[z,x]\subset G$. Note that $f([z,x])$ is connected. Thus by virtue of Lemma \ref{continuous_factory}, $f$ is continuous at $x$, which contradicts the condition that $x\in D_f$.
\end{proof}

I would like to define the interior of $C\subset \R^2$ homeomorphic with the unit circle. To do this, let mi cite the famous theorem of Schönflies.

\begin{theorem}\label{schonflies}
If $C\subset\R^2$ is homeomorphic with the unit circle, then there exists such a homeomorphism $h:\R^2\to\R^2$ that $h(\Sph_1) = C$.
\end{theorem}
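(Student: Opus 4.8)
The plan is to prove the classical Schönflies theorem by reducing it to facts about conformal maps of Jordan domains. Pass to the sphere $\Sph^2 = \R^2\cup\{\infty\}$. By the Jordan curve theorem, $\R^2\setminus C$ has exactly two components, a bounded one $I$ and an unbounded one $E$, and $C$ is the topological boundary of each; on $\Sph^2$ these become two open regions $I$ and $\hat E := E\cup\{\infty\}$, both simply connected with Jordan boundary $C$. The round circle $\Sph_1$ likewise cuts $\Sph^2$ into the open disk $B(0,1)$ and the region $\hat E_0 := \{z : |z|>1\}\cup\{\infty\}$. It suffices to produce a homeomorphism $g$ of $\Sph^2$ onto itself with $g(\infty)=\infty$ and $g(C)=\Sph_1$: its restriction to $\R^2$ is then a homeomorphism of $\R^2$, and $h := g^{-1}$ satisfies $h(\Sph_1)=C$.

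To build $g$, I would first apply the Riemann mapping theorem to get conformal maps $\psi_I : B(0,1)\to I$ and $\psi_E : \hat E_0 \to \hat E$, arranging $\psi_E(\infty)=\infty$ by a preliminary Möbius normalisation. By Carathéodory's theorem on the boundary behaviour of conformal maps onto Jordan domains, each of $\psi_I,\psi_E$ extends to a homeomorphism of the closures, $\bar\psi_I : \hat B(0,1)\to\Clo I$ and $\bar\psi_E : \Clo{\hat E_0}\to\Clo{\hat E}$, each carrying $\Sph_1$ onto $C$. The two induced homeomorphisms $\Sph_1\to C$, say $\alpha = \bar\psi_I|_{\Sph_1}$ and $\beta = \bar\psi_E|_{\Sph_1}$, need not agree. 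To reconcile them, extend the self-homeomorphism $\beta^{-1}\circ\alpha$ of $\Sph_1$ to a homeomorphism $\Phi$ of $\Clo{\hat E_0}$ fixing $\infty$ — the Alexander coning construction, performed after identifying $\Clo{\hat E_0}$ with a closed disk via $z\mapsto 1/\bar z$. Then $\bar\psi_E\circ\Phi$ still maps $\Clo{\hat E_0}$ onto $\Clo{\hat E}$, still fixes $\infty$, and restricts on $\Sph_1$ to $\beta\circ(\beta^{-1}\circ\alpha)=\alpha$.

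Now paste: define $g : \Sph^2\to\Sph^2$ by $g = \bar\psi_I^{-1}$ on $\Clo I$ and $g = (\bar\psi_E\circ\Phi)^{-1}$ on $\Clo{\hat E}$. These two closed sets cover $\Sph^2$ and meet exactly in $C$, where both formulas give $\alpha^{-1}$, so $g$ is well defined; by the pasting lemma it is continuous, it is a bijection (the images $B(0,1)$, $\Sph_1$, $\hat E_0$ are disjoint and cover $\Sph^2$), and a continuous bijection from a compact space to a Hausdorff space is a homeomorphism. Since $g(\infty)=\infty$ and $g(C)=\Sph_1$, the restriction of $g^{-1}$ to $\R^2$ is the desired homeomorphism $h$.

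The one genuinely non‑elementary ingredient here is Carathéodory's boundary theorem (whose own proof rests on prime‑end theory or conformal‑modulus estimates); the Jordan curve theorem, the Riemann mapping theorem, the coning trick and the pasting lemma I would simply invoke, so that the remaining work is only the bookkeeping of boundary identifications. A completely elementary alternative, in the spirit of Newman's treatment, avoids complex analysis altogether: first prove the statement for simple polygons by induction on the number of edges, splitting the polygon along an interior diagonal and gluing the two inductive (and, in the base case, affine) homeomorphisms along that diagonal; then approximate $C$ by a uniformly convergent sequence of simple polygons and realise the successive adjustments by plane homeomorphisms supported in thin neighbourhoods of the curve. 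There the delicate point shifts to showing that the infinite composition of these homeomorphisms converges to a homeomorphism — i.e. that the limit is injective with continuous inverse — which one secures by arranging the corrections, together with their inverses, to be summable.
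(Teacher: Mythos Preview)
Your argument is correct: the Riemann mapping theorem plus Carath\'eodory's boundary extension on each side of $C$, followed by the Alexander coning trick to match the two boundary identifications and the pasting lemma to assemble a global homeomorphism of $\Sph^2$ fixing $\infty$, is one of the standard proofs of the Sch\"onflies theorem. The bookkeeping you give (in particular the verification that both pieces restrict to $\alpha^{-1}$ on $C$) is right.

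There is nothing to compare against, however: the paper does not prove this theorem at all. It is quoted as ``the famous theorem of Sch\"onflies'' and used as a black box to justify the subsequent definition of $B(C)$ and $\hat B(C)$. So your proposal supplies strictly more than the paper does. If you want a reference in the same spirit as the paper's other citations, the result appears in Kuratowski's \emph{Topology II} (the source already used for the neighbouring facts), or in Newman's \emph{Elements of the Topology of Plane Sets of Points} along the polygonal-approximation lines you sketch in your final paragraph.
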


\begin{corollary}\label{circle_disconnect}
If $C\subset\R^2$ is homeomorphic with the unit circle, them it disconnects $\R^2$ into two open subsets, bounded and unbounded.
\end{corollary}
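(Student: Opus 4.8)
The plan is to transport the standard decomposition of $\R^2\setminus\Sph_1$ through the homeomorphism supplied by Schönflies. First I would apply Theorem \ref{schonflies} to obtain a homeomorphism $h:\R^2\to\R^2$ with $h(\Sph_1)=C$. Since $h$ is a bijection, $\R^2\setminus C=h(\R^2\setminus\Sph_1)$; and since $h$ is a homeomorphism it maps connected sets to connected sets and open sets to open sets, hence it carries the connected components of $\R^2\setminus\Sph_1$ bijectively onto the connected components of $\R^2\setminus C$. Now $\R^2\setminus\Sph_1$ has exactly two connected components, namely the open disk $B(0,1)$ and the exterior $\{x\in\R^2:\norm{x}>1\}$, both of which are open. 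Therefore $\R^2\setminus C$ has exactly two connected components, $U=h(B(0,1))$ and $W=h(\{x\in\R^2:\norm{x}>1\})$, and both are open in $\R^2$. (Alternatively one may note that $C$ is a continuous image of the compact set $\Sph_1$, hence compact, hence closed, so $\R^2\setminus C$ is open; and $\R^2$ is locally connected, so the components of an open subset are open.)

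It remains only to decide which of $U$, $W$ is bounded and which is unbounded. Being compact, $C$ is contained in some closed ball $\hat B(0,R)$. The set $\R^2\setminus\hat B(0,R)$ is connected, unbounded, and disjoint from $C$, so it lies inside a single connected component of $\R^2\setminus C$; label that component $W$. Then $W$ is unbounded. The other component $U$ is disjoint from $C$ and from $W$, hence disjoint from $\R^2\setminus\hat B(0,R)$, so $U\subset\hat B(0,R)$ and $U$ is bounded. Thus $\R^2\setminus C=U\cup W$ with $U,W$ disjoint nonempty open sets, $U$ bounded and $W$ unbounded, which is precisely the assertion.

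No step here is a genuine obstacle once Theorem \ref{schonflies} is granted; the content of the corollary is essentially the Jordan curve theorem, and Schönflies is strictly stronger. The only points that deserve explicit mention are that a homeomorphism of $\R^2$ induces a bijection between the families of connected components of corresponding open sets (so that ``exactly two'' is preserved), and that compactness of $C$ forces exactly one of the two resulting components to be unbounded, as carried out above.
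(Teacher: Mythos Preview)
Your argument is correct and follows exactly the route the paper intends: the corollary is placed immediately after Theorem~\ref{schonflies} with no separate proof, so the paper treats it as an immediate consequence of Sch\"onflies, and you have simply spelled out that derivation in full. There is nothing to compare---your write-up is the elaboration the paper omits.
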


In spite of the above we can formulate the following definition.

\begin{definition}
Let $C\subset \R^2$ be homeomorphic with the unit circle. We will denote by $B(C)$ the bounded open subset from Corollary \ref{circle_disconnect}. Moreover, $\hat{B}(C) = B(C) \cup C$. 
\end{definition}

\begin{corollary}
If $C\subset\R^2$ is homeomorphic with the unit circle, then $B(C)$ is homeomorphic with an open disc, $\hat{B}(C)$ is homeomorphic with a closed disc, $Clo(B(C)) = \hat{B}(C)$, $Int(\hat{B}(C)) = B(C)$ and $\partial B(C) = C$.
\end{corollary}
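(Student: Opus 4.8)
The plan is to transport every assertion through the Schönflies homeomorphism. Fix $h\colon\R^2\to\R^2$ with $h(\Sph_1)=C$ as provided by Theorem \ref{schonflies}. The first thing I would record is that a self-homeomorphism of $\R^2$ carries bounded sets to bounded sets: $h(\hat B(0,1))$ is a continuous image of a compact set, hence compact, hence bounded, so $h(B(0,1))$ is bounded. Writing
$$\R^2\setminus C \;=\; h(\R^2\setminus\Sph_1) \;=\; h(B(0,1))\ \cup\ h(\R^2\setminus\hat B(0,1))$$
exhibits $\R^2\setminus C$ as a disjoint union of two open connected sets. Since $C$ is compact and $h(B(0,1))$ is bounded, the set $h(\R^2\setminus\hat B(0,1))$ must be unbounded, for otherwise $\R^2=h(\R^2)$ would itself be bounded. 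By the dichotomy of Corollary \ref{circle_disconnect} this identifies $B(C)=h(B(0,1))$ and shows the unbounded component is $h(\R^2\setminus\hat B(0,1))$.

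Granting that identification, the first two assertions are immediate: $h$ restricts to a homeomorphism $B(0,1)\to B(C)$, so $B(C)$ is homeomorphic to an open disc; and since $\hat B(C)=B(C)\cup C=h(B(0,1))\cup h(\Sph_1)=h(\hat B(0,1))$, the set $\hat B(C)$ is homeomorphic to a closed disc.

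For the remaining three equalities I would invoke that a self-homeomorphism of $\R^2$ commutes with the operators $Clo$, $Int$ and $\partial$, each being definable from the topology alone. Hence $Clo(B(C))=h(Clo(B(0,1)))=h(\hat B(0,1))=\hat B(C)$; likewise $Int(\hat B(C))=h(Int(\hat B(0,1)))=h(B(0,1))=B(C)$; and $\partial B(C)=h(\partial B(0,1))=h(\Sph_1)=C$. Alternatively, the last one follows directly: since $B(C)$ is open, $\partial B(C)=Clo(B(C))\setminus B(C)=\hat B(C)\setminus B(C)=C$.

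I do not expect a serious obstacle here; the only point that deserves a word of care is the identification $B(C)=h(B(0,1))$ — that is, verifying that $h$ sends the inner disc, not the outer region, onto the bounded component of $\R^2\setminus C$ — which is precisely the compactness/boundedness observation made in the first paragraph.
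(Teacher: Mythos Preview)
Your argument is correct. The paper states this corollary without proof, treating it as an immediate consequence of the Sch\"onflies theorem and the definition of $B(C)$; your proposal supplies exactly those details, transporting the standard facts about the Euclidean disc through the global homeomorphism $h$, which is precisely the intended route.
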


\begin{corollary}
If $C\subset\R^2$ is homeomorphic with the unit circle, then $C$ is locally connectedly accessible from $B(C)$ and $C$ is locally connectedly accessible from $\R^2\setminus\hat{B}(C)$.
\end{corollary}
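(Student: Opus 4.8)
The plan is to reduce everything to the case $C=\Sph_1$ and then invoke Example \ref{trivial_example}. By Theorem \ref{schonflies} fix a homeomorphism $h\colon\R^2\to\R^2$ with $h(\Sph_1)=C$. Since $\R^2\setminus\Sph_1$ is the disjoint union of the connected open sets $\mathrm{Int}\,B$ and $\R^2\setminus\hat B$, the set $\R^2\setminus C=h(\mathrm{Int}\,B)\sqcup h(\R^2\setminus\hat B)$ is a disjoint union of two connected open sets, so by Corollary \ref{circle_disconnect} these are precisely the two components of $\R^2\setminus C$; as $h(\hat B)$ is compact, $h(\mathrm{Int}\,B)$ is bounded, whence $h(\mathrm{Int}\,B)=B(C)$ and $h(\R^2\setminus\hat B)=\R^2\setminus\hat B(C)$. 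I would then record the routine fact that local connected accessibility is preserved by homeomorphisms: if $x$ is locally connectedly accessible from $A$ and $\phi$ is a homeomorphism, then $\phi(x)$ is locally connectedly accessible from $\phi(A)$ — immediate from Definition \ref{access}, since $\phi$ maps open neighbourhoods to open neighbourhoods and connected sets to connected sets. Applying this to $h$, it suffices to show that $\Sph_1$ is locally connectedly accessible from $\mathrm{Int}\,B$ and from $\R^2\setminus\hat B$.

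The first statement is exactly Example \ref{trivial_example} with $V=\R^2$, applied at each point of $\partial B=\Sph_1$. For the second, I would conjugate by the inversion $\iota\colon\R^2\setminus\{0\}\to\R^2\setminus\{0\}$, $\iota(p)=p/\norm{p}^2$, an involutive homeomorphism that fixes $\Sph_1$ pointwise and maps $\mathrm{Int}\,B\setminus\{0\}$ onto $\R^2\setminus\hat B$. For $x\in\Sph_1$, Example \ref{trivial_example} gives local connected accessibility from $\mathrm{Int}\,B$, and since no small enough neighbourhood of $x$ contains $0$, near $x$ this is the same as accessibility from $\mathrm{Int}\,B\setminus\{0\}$ within the space $\R^2\setminus\{0\}$; applying the homeomorphism invariance to $\iota$ gives that $x=\iota(x)$ is locally connectedly accessible from $\R^2\setminus\hat B$ within $\R^2\setminus\{0\}$, and this passes to $\R^2$ because an open neighbourhood of $x$ in $\R^2$, intersected with $\R^2\setminus\{0\}$, is still an open neighbourhood of $x$ and the sets $G,E$ produced are then automatically open in, resp.\ contained in, $\R^2$.

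The steps I expect to need care are the puncture bookkeeping in the inversion step and, in the reduction, the correct matching of the bounded component with $\mathrm{Int}\,B$ — both handled above via compactness of $h(\hat B)$. An alternative to the inversion, staying in $\R^2$, is to construct the connecting set by hand: given $u,v$ near $x\in\Sph_1$ with $\norm{u},\norm{v}>1$, retract $u,v$ radially outward onto the circle $\{p:\norm{p}=1+\tfrac{\delta}{2}\}$ and join the two images along the short arc of that circle through $(1+\tfrac{\delta}{2})x$; a routine estimate keeps the whole figure outside $\hat B$ and inside the prescribed neighbourhood provided $u,v$ were taken in a small enough ball about $x$. The thing to watch in that variant is that the straight segment $[u,v]$ itself need not work, since a chord between two points just outside $\Sph_1$ can pass through $\hat B$.
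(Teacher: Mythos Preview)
Your proof is correct and follows the route the paper intends: the paper's own argument is the single line ``(For proof see Example~\ref{trivial_example})'', relying on the reader to supply precisely the Sch\"onflies reduction and homeomorphism-invariance you spell out. Your treatment is in fact more careful than the paper's, particularly for the exterior component $\R^2\setminus\hat B(C)$, which Example~\ref{trivial_example} does not literally cover and which you handle cleanly via inversion (or the alternative radial-plus-arc construction).
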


(For proof see Example \ref{trivial_example}.)

No we are ready to prove one of the most important facts concerning skyhooks.

\begin{theorem}\label{force_dicont}
If $f:\R^2\to[0, +\infty)$ has a closed and connected graph, $C$ is a skyhook, then $D_f\cap B(C)\not=\emptyset$.
\end{theorem}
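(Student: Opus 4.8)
The plan is to proceed by contradiction: suppose the graph of $f$ is closed and connected but $D_f\cap B(C)=\emptyset$, so $B(C)\subseteq C_f$. Write $h:\Sph_1\to C$ for the homeomorphism in the definition of the skyhook and $p=h(e^0)$ for the infinity point, so that $\phi\mapsto f(h(e^{i\phi}))$ is continuous on $[0,2\pi)$ and tends to $+\infty$ as $\phi\to 2\pi$. Read through the homeomorphism $(0,2\pi)\cong C\setminus\{p\}$, this shows that $C\setminus\{p\}$ is connected and $f|(C\setminus\{p\})$ is continuous; since $C\setminus\{p\}\subseteq C$, it is locally connectedly accessible from $B(C)$ (by the corollary above, proved from Example~\ref{trivial_example}). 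Applying Theorem~\ref{neither_or} with $A=B(C)$ and $E=C\setminus\{p\}$ then yields the dichotomy: either $C\setminus\{p\}\subseteq B(C)^f_0$ or $C\setminus\{p\}\subseteq B(C)^f_\infty$.

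The common device for dispatching both branches is Corollary~\ref{col_exp}: since $p$ is locally connectedly accessible from $B(C)\subseteq C_f$, once we know $p\in B(C)^f_\infty\cap Clo(B(C)^f_0)$ we obtain $\lim_{B(C)^f_0\ni u\to p}f(u)=+\infty$, which we then contradict by approaching $p$ inside $B(C)^f_0$ with $f$ staying finite. In the first branch, $B(C)\cup(C\setminus\{p\})\subseteq B(C)^f_0$ forces $B(C)^f_\infty\subseteq\{p\}$; and if $p\in B(C)^f_0$ as well, then by Corollary~\ref{easy_con} the function $f|\hat B(C)$ is continuous on the compact disc $\hat B(C)$, hence bounded, contradicting $f(h(e^{i\phi}))\to+\infty$. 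So $p\in B(C)^f_\infty$, whence $B(C)^f_0=\hat B(C)\setminus\{p\}$ (dense in $\hat B(C)$) and $C\setminus\{p\}\subseteq B(C)^f_0$; Corollary~\ref{col_exp} now forces $f\to+\infty$ along $B(C)^f_0\ni h(e^{i\phi})\to p$ as $\phi\to 0^+$, contradicting $f(h(e^{i\phi}))\to f(p)$ (continuity at $\phi=0$). So the first branch is impossible.

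The second branch, $C\setminus\{p\}\subseteq B(C)^f_\infty$, is the step I expect to be the real obstacle, since here we must invoke the closedness and connectedness of the \emph{whole} graph rather than merely local facts about the disc. Theorem~\ref{exploding} gives $\lim_{B(C)\ni u\to x}f(u)=+\infty$ for every $x\in C\setminus\{p\}$; ruling out $p\in B(C)^f_0$ exactly as above (it would make $f|\hat B(C)$ continuous at $p$, contradicting $f(h(e^{i\phi}))\to+\infty$ along $h(e^{i\phi})\to p$) gives $B(C)^f_\infty=C$ and $B(C)^f_0=B(C)$. Consider $\Sigma:=\{(x,f(x)):x\in B(C)\}$, which is open in the graph of $f$ as the trace of $B(C)\times\R$. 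By closedness of the graph, each limit point of $\Sigma$ has the form $(x_*,f(x_*))$ with $x_*\in Clo(B(C))=\hat B(C)$, and the case $x_*\in C\setminus\{p\}$ contradicts $\lim_{B(C)\ni u\to x_*}f(u)=+\infty$; hence $Clo(\Sigma)\subseteq\Sigma\cup\{(p,f(p))\}$. If $(p,f(p))\notin Clo(\Sigma)$, then $\Sigma$ is a nonempty proper clopen subset of the graph of $f$, contradicting connectedness. If $(p,f(p))\in Clo(\Sigma)$, there is a net in $B(C)=B(C)^f_0$ tending to $p$ along which $f$ tends to the finite value $f(p)$, contradicting $\lim_{B(C)^f_0\ni u\to p}f(u)=+\infty$ from Corollary~\ref{col_exp} (applicable since $p\in C=B(C)^f_\infty$, $p\in\hat B(C)=Clo(B(C))=Clo(B(C)^f_0)$, and $p$ is locally connectedly accessible from $B(C)\subseteq C_f$). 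Both branches lead to a contradiction, so $D_f\cap B(C)\neq\emptyset$.
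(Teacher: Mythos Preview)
Your proof is correct and follows essentially the same route as the paper: contradiction, the dichotomy from Theorem~\ref{neither_or} applied to $E=C\setminus\{p\}$ and $A=B(C)$, Corollary~\ref{col_exp} to kill the branch $C\setminus\{p\}\subset B(C)^f_0$, and a clopen argument on the graph of $f|B(C)$ to kill the branch $C\setminus\{p\}\subset B(C)^f_\infty$. Two minor simplifications from the paper's version: in the second branch you need not argue separately that $p\notin B(C)^f_0$, since $B(C)^f_\infty$ is closed and $p\in\overline{C\setminus\{p\}}$ already forces $p\in B(C)^f_\infty$; and then Theorem~\ref{exploding} applied uniformly at every point of $C$ (including $p$) gives the clopen conclusion in one stroke, so the separate case split on whether $(p,f(p))\in\overline\Sigma$ is unnecessary.
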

\begin{proof}
Assume to the contrary that $B(C)\subset C_f$. Let $x_0$ be the infinity point of the skyhook $C$. By Theorem \ref{neither_or}, $C\setminus\{x_0\}\subset B(C)^f_0$ or $C\setminus\{x_0\}\subset B(C)^f_\infty$. Since $B(C)^f_\infty$ is closed, if $C\setminus\{x_0\}\subset B(C)^f_\infty$, then $C\subset B(C)^f_\infty$, which contradicts (by Theorem \ref{exploding}) to the connectedness of the graph.

Thus $C\setminus\{x_0\}\subset B(C)^f_0$. Since $x_0$ is the infinity point of $C$, it's easy to notice that $x_0\in B(C)^f_\infty$. Since $x_0$ is locally connectedly accessible from $B(C)$, by Corollary \ref{col_exp}, $\lim_{C\setminus\{x_0\}\ni x\to x_0}f(x)=+\infty$. But by the definition of shyhook, we have such $v_n\in C$, that $v_n\to x_0$ and $f(v_n)\to f(x_0)$. Contradiction. 
\end{proof}

\begin{fact}
If $X$ is a locally connected, completely metrisable space and $E\subset X$ is closed, then $E$ is a cutting of $X$ iff $X\setminus E$ is disconnected.
\end{fact}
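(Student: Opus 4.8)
The plan is to establish the two implications separately, using the Kuratowski convention that $E$ \emph{cuts $X$ between} points $a,b\in X\setminus E$ when every connected subset of $X$ containing both $a$ and $b$ meets $E$ (equivalently, in the spaces considered here, when every arc with endpoints $a,b$ meets $E$), so that $E$ is a \emph{cutting of $X$} iff it cuts $X$ between some such pair. The implication ``$X\setminus E$ disconnected $\Rightarrow$ $E$ is a cutting'' is the easy half and uses only that $E$ is closed: I would write $X\setminus E=U\cup V$ with $U,V$ open in $X$, nonempty and disjoint, and pick $a\in U$, $b\in V$; a connected set $C\subset X$ with $a,b\in C$ and $C\cap E=\emptyset$ would lie in $U\cup V$, and then connectedness of $C$ forces $C\subset U$ or $C\subset V$, contradicting $a\in U$ and $b\in V$. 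Hence $E$ cuts $X$ between $a$ and $b$.

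For the converse I would argue by contraposition: assume $X\setminus E$ is connected and show $E$ is not a cutting. Since $E$ is closed, $X\setminus E$ is open in $X$, hence locally connected (an open subspace of a locally connected space is locally connected) and, being open and therefore a $G_\delta$ in the completely metrisable space $X$, itself completely metrisable (a $G_\delta$ subspace of a completely metrisable space is completely metrisable). So $X\setminus E$ is connected, locally connected and completely metrisable, whence it is arcwise connected, by the classical theorem that every connected, locally connected, completely metrisable space is arcwise connected (see Kuratowski, \emph{Topology} II). Consequently, for arbitrary $a,b\in X\setminus E$ there is an arc — in particular a connected set — joining $a$ to $b$ inside $X\setminus E$, so $E$ does not cut $X$ between $a$ and $b$; as the pair was arbitrary, $E$ is not a cutting.

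The main obstacle is this converse direction, and within it the sole non-formal ingredient: that a connected, locally connected, completely metrisable space is arcwise connected. This is the one place where complete metrisability is genuinely used; the remaining steps — passing to the open, hence $G_\delta$, subspace and checking that it inherits local connectedness and complete metrisability — are routine applications of standard facts. I note finally that if the paper's notion of ``cutting'' were phrased through arbitrary connected sets rather than through arcs or continua, the converse would be immediate (the connected set $X\setminus E$ itself joins any prescribed $a,b$ while missing $E$) and the hypotheses of local connectedness and completeness would be superfluous; those hypotheses are present precisely to accommodate the continuum/arc formulation, which is where the real work lies.
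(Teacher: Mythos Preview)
Your argument is correct. The paper does not actually supply its own proof of this fact; it simply records it as a consequence of Kuratowski~\cite[50.II.8]{Ku2}. Your write-up therefore goes beyond what the paper does: you unpack the citation by giving the standard two-step argument, the substantive half being that $X\setminus E$ --- open, hence locally connected and completely metrisable --- is arcwise connected whenever it is connected, so that an arc (in particular a continuum) avoiding $E$ joins any prescribed pair and $E$ fails to cut. This is exactly the content behind the Kuratowski reference, so your approach is the same in spirit, just made explicit; your closing remark that the ``connected-set'' formulation would trivialise the converse is also apt and correctly identifies why the completeness and local connectedness hypotheses are needed.
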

(for proof: this is a conclusion from \cite[50.II.8]{Ku2}

\begin{fact}\label{clo_lc}
If $X$ is a topological space and $A,B\subset X$ are both closed and locally connected, then $A\cup B$ is locally connected.
\end{fact}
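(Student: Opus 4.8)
The plan is to pass to $Z:=A\cup B$ with its subspace topology, in which $A$ and $B$ are again closed, and to use the standard criterion that a space is locally connected if and only if every connected component of every open subset of it is open. Thus I would fix an open set $U\subseteq Z$ together with a connected component $Q$ of $U$, and the goal becomes to show that $Q$ is open in $Z$.

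First I would establish that $Q\cap A$ is open in $A$ and, by symmetry, that $Q\cap B$ is open in $B$. The key observation is that $U\cap A$ is open in $A$, and since $A$ is locally connected, each component of $U\cap A$ is open in $A$; moreover any such component that meets $Q$ is a connected subset of $U$ intersecting $Q$, hence is contained in $Q$ by maximality of $Q$. Consequently $Q\cap A$ is a union of components of $U\cap A$, so it is open in $A$. I would then fix sets $G_A,G_B$ open in $Z$ with $Q\cap A=G_A\cap A$ and $Q\cap B=G_B\cap B$.

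Next I would check that $Q$ is a neighbourhood of each of its points $x$, distinguishing three cases. If $x\in A\cap B$, then $G_A\cap G_B$ is open in $Z$, contains $x$, and is contained in $Q$: any of its points lies in $A$ or in $B$, landing in $G_A\cap A=Q\cap A\subseteq Q$ in the first case and in $G_B\cap B=Q\cap B\subseteq Q$ in the second. If $x\in A\setminus B$ --- and this is the only place where closedness is used --- then $Z\setminus B$ is open in $Z$ and contained in $A$, so $(Z\setminus B)\cap G_A$ is an open neighbourhood of $x$ contained in $A\cap G_A=Q\cap A\subseteq Q$; the case $x\in B\setminus A$ is symmetric, using that $A$ is closed. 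Hence $Q$ is open in $Z$, and therefore $A\cup B$ is locally connected.

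I expect the only genuinely delicate step to be the claim that $Q\cap A$ is open in $A$: one must notice that $Q\cap A$ decomposes into entire components of $U\cap A$ rather than into arbitrary connected pieces, and this is precisely where the maximality of the component $Q$ is combined with the local connectedness of $A$. The remaining arguments --- the gluing $G_A\cap G_B$ on the overlap $A\cap B$ and the separation off $A\cap B$ elsewhere --- are routine point-set manipulations. An essentially equivalent alternative would be to produce, for every $x\in Z$ and every open $U\ni x$, a connected neighbourhood of $x$ inside $U$ directly (gluing a connected open neighbourhood of $x$ in $A$ to one in $B$ when $x\in A\cap B$, and taking just one of them otherwise) and then to invoke the equivalence between local connectedness and the existence of a neighbourhood basis of connected sets.
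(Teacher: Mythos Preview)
Your proof is correct. The paper itself does not supply a proof of this fact but merely cites Kuratowski \cite[49.I.3]{Ku2}, so there is no in-paper argument to compare against; your component-wise argument via the criterion ``every component of every open set is open'' is precisely the classical proof one finds in standard references, and the alternative you sketch at the end (gluing connected neighbourhoods directly) is the other standard variant.
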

(for proof: e.g. \cite[49.I.3]{Ku2})

\begin{lemma}\label{important_lac}
If $X$ is a topological space, $A\subset X$ is closed and locally arcwise-connected, $E\subset X$ is closed and $\partial E$ is locally arcwise-connected, then $(A \cap E) \cup \partial E$ is locally arcwise-connected. Morover, if $A$ and $\partial E$ are connected and are not separated by any point and $|A\cap\partial E| \geq 2$, then $(A \cap E) \cup \partial E$ is arcwise-connected and not separated by any point.
\end{lemma}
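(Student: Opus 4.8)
The plan is to establish local arcwise-connectedness of $S:=(A\cap E)\cup\partial E$ point by point, the only non-routine case being a point of $A\cap\partial E$, and then to derive the ``moreover'' part from the principle that a connected, locally arcwise-connected (Hausdorff) space is arcwise-connected, combined with a \emph{truncation device} that converts an arc in $A$ ending on $\partial E$ into an arc inside $A\cap E$.

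First I would dispose of the easy points. If $p\in(A\cap E)\setminus\partial E$, then $p\in\mathrm{int}E$, so there is an open $W\ni p$ with $W\subset\mathrm{int}E$; since $\partial E\cap W=\emptyset$ we get $S\cap W=A\cap W$, so a neighbourhood of $p$ in $S$ is a neighbourhood of $p$ in $A$, and the local arcwise-connectedness of $A$ passes to $S$ at $p$. If $p\in\partial E\setminus A$, then since $A$ is closed there is an open $W\ni p$ with $W\cap A=\emptyset$, whence $S\cap W=\partial E\cap W$, and we conclude from the local arcwise-connectedness of $\partial E$.

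The heart of the matter is a point $p\in A\cap\partial E$. Given an open $O\ni p$, I would use local arcwise-connectedness of $\partial E$ to pick an open $O_1$ with $p\in O_1\subset O$ and $\partial E\cap O_1$ arcwise-connected, then local arcwise-connectedness of $A$ to pick an open $O_2$ with $p\in O_2\subset O_1$ and $A\cap O_2$ arcwise-connected. I claim every $q\in S\cap O_2$ is joined to $p$ by an arc in $S\cap O_1$. If $q\in\partial E$ this is immediate. If $q\in A\cap\mathrm{int}E$, take an arc $\gamma\colon[0,1]\to A\cap O_2$ with $\gamma(0)=q$, $\gamma(1)=p$, and let $[0,s]$ be the connected component of the closed set $\gamma^{-1}(E)$ containing $0$ (here $s>0$, since $q\in\mathrm{int}E$). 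If $s=1$ then $\gamma$ already lies in $A\cap E\subset S$. If $s<1$, then maximality of $[0,s]$ forces $\gamma(s)$ to be a limit of points of $\gamma([0,1])\setminus E\subset X\setminus E$, so $\gamma(s)\in E\setminus\mathrm{int}E=\partial E$; thus $\gamma|[0,s]$ is an arc in $A\cap E$ from $q$ to $\gamma(s)\in\partial E\cap O_1$, which we concatenate with an arc in $\partial E\cap O_1$ from $\gamma(s)$ to $p$ and then extract an arc from the resulting path (a path between two points of a Hausdorff space always contains an arc, which suffices in all intended applications of this lemma). The main obstacle is precisely this ``the arc in $A$ may leave $E$'' phenomenon, and the truncation-at-first-exit trick above is what resolves it; it is reused in the second part.

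For the ``moreover'' part, note that $A$ and $\partial E$, being connected and locally arcwise-connected, are arcwise-connected, and $A\cap\partial E\neq\emptyset$ by hypothesis. Given $q_1,q_2\in S$: by the truncation device each $q_i\in A\cap E$ is joined by an arc in $A\cap E\subset S$ to some $z_i\in\partial E$ (if $q_i\in\partial E$ already, set $z_i=q_i$), and $z_1,z_2$ are joined by an arc in $\partial E\subset S$; concatenating and extracting an arc yields arcwise-connectedness of $S$. Finally, to see that $S$ is not separated by any point, fix $x\in S$ and run the same scheme inside $S\setminus\{x\}$: by hypothesis $A\setminus\{x\}$ and $\partial E\setminus\{x\}$ are connected, they are open in $A$ respectively $\partial E$ and hence locally arcwise-connected, so arcwise-connected; since $|A\cap\partial E|\geq 2$ the set $(A\cap\partial E)\setminus\{x\}$ is nonempty, and the truncation device applied to arcs in $A\setminus\{x\}$ keeps us inside $(A\cap E)\setminus\{x\}$ and lands on a point of $\partial E\setminus\{x\}$. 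Hence $S\setminus\{x\}$ is arcwise-connected, in particular connected, which completes the proof.
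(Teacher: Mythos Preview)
Your proof is correct and follows essentially the same approach as the paper: the same three-case breakdown, the same truncation-at-first-exit device (the paper writes $t_0=\sup\{t\in[p,x]:[p,t]\subset E\}$ where you take the component of $\gamma^{-1}(E)$ containing $0$, which is the same thing), and the same scheme for the moreover part via arcwise-connectedness of $A\setminus\{x\}$ and $\partial E\setminus\{x\}$ combined with $|A\cap\partial E|\ge 2$. Your explicit remark that extracting an arc from a concatenated path requires Hausdorffness is a detail the paper leaves implicit.
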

\begin{proof}
Take any $x\in (A \cap E) \cup \partial E$. If $x\in A \cap Int E$, then, since $A$ is locally arcwise-connected, $(A \cap E) \cup \partial E$ is locally arc-wise connected at $x$.
If $x\in \partial E \setminus A$, then the set $(A \cap E) \cup \partial E$ is locally arcwise-connected at $x$, since $A$ is closed and $\partial E$ is locally arcwise-connected. It's enough to check the case $x\in A\cap \partial E$.\\
Take any open neighbourhood $U$ of the point $x$.
By local arcwise-connectedness of $\partial E$ there exists an open neighbourhood $V_{\partial E} \subset U$ of the point $x$, such that for any $y\in V_{\partial E} \cap \partial E$ there exists an arc connecting points $x$ and $y$, contained in $U\cap \partial E$. 
On the other hand, since $A$ is locally arcwise-connected, there exists an open $V_A$ of the point $x$, such that $V_A\subset V_{\partial E} \subset U$ and for any $z\in V_A\cap A$ there exists an arc connecting points $z$ and $x$, contained in $V_{\partial E}\cap A$. 
We will show (*) that for each $p\in (A\cap Int E)\cap V_A$ there exists an arc connecting $p$ with some point in $\partial E$, contained in $(A\cap E)\cap V_{\partial E}$. Indeed, considering how $V_A$ was chosen, there exists an arc $[p,x]\subset V_A\cap A$. 
Since $p\in Int E$ we can define $t_0 = sup\{t\in[p,x]:[p,t]\subset E\}$. Since $p\in Int E$, $t_0 > p$. Obviously $t_0\in \partial E$ and $[p,t_0]\subset V_{\partial E} \cap (A \cap E)$. (*) is shown. 

Take any $q\in ((A \cap E) \cup \partial E)\cap V_{\partial E}$. If $q\in \partial E$, then considering how $V_{\partial E}$ was chosen, there exits an arc connecting points $q$ and $x$, contained in $\partial E \cap U \subset ((A\cap E)\cup \partial E)\cap U$. If $q\in (A\cap Int E)$, then by (*) there is an arc connecting $q$ with some $e\in\partial E$, contained $(A\cap E)\cap V_{\partial E}$. Next, considering how $V_{\partial E}$ was chosen there exists an arc connecting points $e$ and $x$, contained in $((A\cap E)\cup \partial E)\cap U$. Therefore we can show that there exists and arc connecting points $q$ and $x$, contained in $((A\cap E)\cup \partial E)\cap U$.
The proof that $(A\cap E)\cup \partial E$ is locally arcwise-connected is completed.

We will show the moreover part.
If $A$ and $\partial E$ are connected, then they are also arcwise-connected. Thus, it's easy to notice, that since $A \cap \partial E \not=\emptyset$, $(A \cap E)\cup \partial E$ is also arcwise-connected.
Take any $p\in (A \cap E) \cup \partial E$. Since $A\setminus \{p\}$ and $\partial E\setminus \{p\}$ are connected and locally arcwise-connected, the are arcwise-connected. Since $|A\cap\partial E| \geq 2$, $A\setminus\{p\} \cap (\partial E\setminus\{p\}) \not=\emptyset$. Thus $((A \cap E) \cup \partial E)\setminus\{p\}$ is arcwise-connected.  
\end{proof}

\begin{theorem}\label{circle_factory}
If $E\subset \R^2$ is a locally connected continuum, not separated by any point and $R$ is a connected component of $\R^2\setminus E$, then $\partial R$ is homeomorphic with the unit circle.
\end{theorem}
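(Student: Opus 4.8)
The plan is to transport the question to the sphere, recognise $\partial R$ as a locally connected continuum, prove that it has no cut point, and then quote the classical characterisation of the circle among such continua. First I would pass to $\Sph_2 = \R^2 \cup \{\infty\}$, viewing $E$ there as a (nondegenerate) continuum and $R$ as a domain of $\Sph_2$; since the complement of a connected set in $\Sph_2$ has only simply connected components, $R$ is simply connected, and its boundary in $\Sph_2$ is exactly $\partial R$. Put $J := \partial R$. A short point-set argument shows that $R$ is itself the component of $\Sph_2 \setminus J$ containing it and that $J$ is its whole boundary: if $R'$ is that component, then $\partial R' \subset J$ because $R'$ is a component of the open set $\Sph_2 \setminus J$, and $\partial R \subset \partial R'$ because every boundary point of $R$ lies in $J$ and hence outside $R'$; so $\partial R' = \partial R = J$, and $R' = R$ since otherwise $R$ and $R' \setminus \clo R$ would disconnect $R'$. (That $J$ is a continuum follows from the connectedness of $\Sph_2 \setminus R$ and the unicoherence of $\Sph_2$; $J$ is nondegenerate because $E$ is.) So it suffices to prove: a locally connected continuum $J \subset \Sph_2$, not separated by any point, which is the full boundary of its simply connected complementary domain $R$, is homeomorphic with $\Sph_1$.

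I would next establish that $J$ is locally connected. This is Torhorst's theorem — the boundary of a complementary domain of a locally connected continuum in $\Sph_2$ is again a locally connected continuum (see, e.g., \cite{Ku2}); alternatively it can be rebuilt from Fact \ref{clo_lc} by exhausting $\clo R$ by closed locally connected subsets chosen compatibly with the local connectedness of $E$.

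The heart of the proof is to show that $J$ has no cut point. Suppose not: some $p \in J$ separates $J$, say $J \setminus \{p\} = H_1 \cup H_2$ with $H_1, H_2$ nonempty, disjoint and relatively open. Since $E \supset J$ is not separated by $p$, the space $E \setminus \{p\}$ is connected; being open in the Peano continuum $E$ it is locally arcwise-connected, hence arcwise-connected, so there is an arc $K \subset E \setminus \{p\}$ from a point of $H_1$ to a point of $H_2$. The contradiction will come from pushing this connecting set onto $J$: using that $J$ is closed and locally arcwise-connected and that points of $J$ are locally connectedly accessible from the parts of $E$ adjacent to $R$ (Theorem \ref{schonflies} and Example \ref{trivial_example}, together with the gluing of Lemma \ref{important_lac}, applied with $\clo R$ — or a small disc inside it — as the locally arcwise-connected closed set), one replaces $K$ by a connected subset of $J \setminus \{p\}$ still meeting both $H_1$ and $H_2$, against the assumed separation. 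I expect this transfer — converting a connecting path in $E \setminus \{p\}$ into a connecting set in $\partial R \setminus \{p\}$ — to be the main obstacle, and the place where the full strength of Lemma \ref{important_lac} and of the accessibility corollaries of Theorem \ref{schonflies} is used.

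Finally, with $J$ a locally connected continuum with no cut point that is the full boundary of its simply connected complementary domain $R$, I would conclude $J \cong \Sph_1$. Given distinct $a, b \in J$, one finds (using local connectedness, and that a path contains an arc) a cross-cut of $R$ joining them: an arc $\gamma \subset \clo R$ with $\gamma \cap J = \{a, b\}$ and $\gamma \setminus \{a, b\} \subset R$. Since $R$ is simply connected, $\gamma$ splits it into exactly two domains $R_1, R_2$, and then $(J \cap \clo{R_1}) \setminus \{a, b\}$ and $(J \cap \clo{R_2}) \setminus \{a, b\}$ are disjoint, relatively closed in $J \setminus \{a, b\}$, cover it, and are both nonempty: were $J \cap \clo{R_1} \subset \{a, b\}$, then $\partial R_1 \subset \gamma$, so $R_1$ would be clopen in the connected set $\Sph_2 \setminus \gamma$ (an arc does not separate the sphere) and hence equal to it, contradicting $R_2 \not= \emptyset$. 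Thus every pair of distinct points of $J$ separates $J$; a metric continuum with no cut point and this separation property is a simple closed curve (see, e.g., \cite{Ku2}), which finishes the proof.
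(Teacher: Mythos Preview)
The paper does not prove this independently: its entire argument is to note that $E$ has no cut point, embed $\R^2$ in the sphere, and cite \cite[61.II.4]{Ku2}, which is precisely the classical theorem you are trying to reconstruct. So you have set yourself a much harder task than the author did.

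Your outline is the standard one, but the step you yourself flag as ``the main obstacle'' is a genuine gap, not merely a detail to be filled in. You want to push an arc $K \subset E \setminus \{p\}$ onto a connected subset of $J \setminus \{p\}$, and you invoke Lemma~\ref{important_lac}, Theorem~\ref{schonflies}, and Example~\ref{trivial_example}. None of these tools does that job. Lemma~\ref{important_lac} tells you that certain unions are locally arcwise-connected; it provides no retraction or projection onto $J$. Schoenflies and Example~\ref{trivial_example} give local accessibility of boundary points from $R$, but $K$ lies in $E$, not in $R$, and may pass through boundaries of \emph{other} complementary domains of $E$, arbitrarily far from $J$; nothing in your cited lemmas brings such a path back to $J$ while avoiding $p$. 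The classical proofs of this ``no cut point'' step go through the Carath\'eodory extension of a Riemann map, or a direct cross-cut argument inside $\clo R$, or cyclic-element theory --- none of which is packaged in this paper's lemmas, which were built for a different purpose.

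A way around the gap: the characterisation you quote at the end --- a nondegenerate continuum separated by every pair of its points is a simple closed curve --- does not need the ``no cut point'' hypothesis as a separate input; it already implies it. Your cross-cut argument in the final paragraph is close to complete (you still need that every point of a locally connected $J$ is accessible from $R$, and that a cross-cut of a simply connected domain in $\Sph_2$ splits it into exactly two subdomains; both are standard and can be found in \cite{Ku2}). Carrying that paragraph out carefully would finish the proof without ever touching the problematic transfer.
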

\begin{proof}
Since $E$ is a locally connected continuum not separated by any point, it contains no cut point. Since $\R^2$ is homeomorphic with a sphere without a point, the thesis holds by \cite[61.II.4]{Ku2}.
\end{proof}

\begin{theorem}\label{zero_components}
If $E\subset \R^2$ is a locally connected continuum and $R_1, R_2, \dots$ is an infinite sequence of pair-wise disjoint connected components of $\R^2\setminus E$, then $\lim\limits_{n\to\infty} diam(R_n) = 0$.
\end{theorem}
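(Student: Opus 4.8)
The plan is to argue by contradiction and to convert the conclusion into a failure of local connectedness of $E$. Suppose $diam(R_n)\not\to 0$; passing to a subsequence, fix $\varepsilon>0$ with $diam(R_n)\geq 3\varepsilon$ for all $n$. At most one $R_n$ is the unbounded component of $\R^2\setminus E$, so after discarding it I may assume every $R_n$ is bounded; a bounded complementary domain of $E$ lies in the convex hull of $E$ (a half--plane missing $E$ would lie in an unbounded complementary domain), so all the $R_n$ lie in one fixed compact set. I shall use three standard facts: components of open subsets of $\R^2$ are open, so each $R_n$ is open with $\partial R_n\subset E$; for a bounded planar domain $diam(\partial R_n)=diam(R_n)\geq 3\varepsilon$ (extend a nearly diametral chord of $\overline{R_n}$ along its line until it first leaves $\overline{R_n}$, so meeting $\partial R_n$); and $\partial R_n$ is a continuum, being the frontier of a complementary domain of a continuum in $\Sph^2$ (see \cite{Ku2}, \S 61). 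Finally, each $R_n$ is simply connected, because $\R^2\setminus R_n$ — namely $E$ together with the remaining complementary domains, each attached to $E$ along its frontier — is connected.

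Next I would extract a limiting configuration. Pick $a_n,b_n\in\partial R_n$ with $\dist(a_n,b_n)\geq 2\varepsilon$ and, since accessible boundary points are dense, take $a_n,b_n$ accessible from $R_n$; passing to a further subsequence, $a_n\to a$ and $b_n\to b$ with $a,b\in E$ and $\dist(a,b)\geq 2\varepsilon$. As the compact, connected, locally connected space $E$ is uniformly locally connected, there is $\delta>0$ with: any two points of $E$ at distance $<\delta$ lie in a subcontinuum of $E$ of diameter $<\varepsilon/4$. Fixing $N$ large, gluing the subcontinua that join $a_N$ to $a_m$ for $m\geq N$ (all contain $a_N$, so the union is connected) and taking the closure gives a subcontinuum $K'\subset E$ with $a\in K'$ and $a_n\in K'$ for all $n\geq N$, lying in a small ball about $a$; symmetrically a subcontinuum $L'\subset E$ with $b\in L'$ and $b_n\in L'$ for $n\geq N$, lying in a small ball about $b$. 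Since $\dist(a,b)\geq 2\varepsilon$, $K'$ and $L'$ are disjoint, so for every large $n$ the continuum $\partial R_n$ joins the two disjoint small continua $K'$ and $L'$.

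It remains to contradict local connectedness of $E$ at $a$, and this is the step I expect to be the main obstacle. For $n$ large I would join $a_n$ to $b_n$ by a crosscut $J_n$ of the simply connected domain $R_n$ (possible as $a_n,b_n$ are accessible from $R_n$), so that $J_n\setminus\{a_n,b_n\}\subset R_n$ and $J_n$ splits $R_n$ into two subdomains; concatenating a short arc in $K'$ from $a$ to $a_n$, then $J_n$, then a short arc in $L'$ from $b_n$ to $b$ gives an arc from $a$ to $b$ whose only portion outside $E$ is the bridge $J_n\setminus\{a_n,b_n\}$, and these bridges are pairwise disjoint since the $R_n$ are. Near its endpoint $a_n$ the crosscut $J_n$ locally separates $R_n$ into its two sides, and a Jordan--curve--theorem argument then shows that no connected subset of $E$ of diameter $<\varepsilon/2$ containing $a$ can contain $a_n$ for large $n$; so $a$ has no small connected neighbourhood in $E$, contradicting local connectedness of $E$ at $a$. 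The delicate point is precisely this planar separation; in fact Theorem~\ref{zero_components} is a reformulation of the classical fact that a locally connected continuum in $\Sph^2$ has, for each $\eta>0$, only finitely many complementary domains of diameter $\geq\eta$ (see \cite{Ku2}, \S 61, or Whyburn's \emph{Analytic Topology}, Ch.~VI), so one may also cite it outright.
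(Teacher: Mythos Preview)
The paper's ``proof'' of this theorem is a bare citation, \cite[61.II.10]{Ku2}; no argument is given. Your final sentence---that the statement is classical and may be cited outright from Kuratowski or Whyburn---is therefore exactly the paper's approach, and for the purposes of this manuscript that is all that is needed.

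Your from-scratch sketch, on the other hand, goes well beyond what the paper does, and while the setup (passing to a subsequence, bounding the $R_n$, extracting accessible boundary points $a_n,b_n$ with $a_n\to a$, $b_n\to b$, and invoking uniform local connectedness of $E$) is sound, the closing step is not merely delicate but, as written, self-contradictory. You first construct a subcontinuum $K'\subset E$ of diameter at most roughly $\varepsilon/2$ containing $a$ and all $a_n$ for $n\geq N$; two paragraphs later you assert that a Jordan--curve argument shows \emph{no} connected subset of $E$ of diameter $<\varepsilon/2$ containing $a$ can contain $a_n$ for large $n$. These two claims cannot both hold, and the second is the one that is unjustified: the crosscuts $J_n$ lie in pairwise disjoint domains $R_n$, so it is not clear what Jordan curve you intend to form or why it would separate the $a_n$ from $a$ within $E$. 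The standard route to the contradiction is different: one shows that the continua $\partial R_n$ (each of diameter $\geq 3\varepsilon$, each meeting every small neighbourhood of $a$) produce a nondegenerate \emph{convergence continuum} in $E$, which is the classical obstruction to local connectedness of a compactum. If you want a self-contained argument, that is the mechanism to invoke; otherwise, your citation suffices and matches the paper.
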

\begin{proof}
\cite[61.II.10]{Ku2}.
\end{proof}

\begin{lemma}\label{force_skyhook}
If $K\subset\R^2$ is a locally connected continuum, not separated by any point, $\R^2\setminus K$ has at least one bounded connected component, $f : K \to [0,+\infty)$ has a closed graph, $x_0\in K$, $D_f=\{x_0\}$ and $(x_0,f(x_0))\in Clo(f|K\setminus\{x_0\})$, then there exists a bounded connected component $R$ of $\R^2\setminus K$, such that $\partial R$ is a skyhook with the infinity point $x_0$. 
\end{lemma}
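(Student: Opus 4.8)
The plan is to first manufacture a semi-skyhook ending at $x_0$, then, after analysing the behaviour of $f$ near $x_0$, to close this semi-skyhook into a skyhook $C\subset K$, and finally to replace $K$ by $K\cap\hat{B}(C)$ and extract from it a complementary domain whose boundary is the required skyhook. To begin, a locally connected continuum is locally arcwise connected, so $K$ is locally arcwise connected at $x_0$; since $[0,+\infty)$ is locally compact, Theorem \ref{force_infinity_point} produces a semi-skyhook $[z,x_0]\subset K$ with infinity point $x_0$. Furthermore, because $x_0\in D_f$ and the graph of $f$ is closed, some sequence $u_n\to x_0$ has $f(u_n)\to+\infty$ (any finite cluster value of $f$ at $x_0$ must equal $f(x_0)$, by closedness of the graph), while by hypothesis some sequence $v_n\to x_0$ has $f(v_n)\to f(x_0)$.

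Next I would record the local structure of $f$ at $x_0$. Since $K\setminus\{x_0\}\subset C_f$, no connected subset $S$ of $K\setminus\{x_0\}$ with $x_0\in\overline{S}$ can simultaneously carry arbitrarily large values of $f$ and values of $f$ arbitrarily close to $f(x_0)$: otherwise $f(x_0)+1$ would be attained on a sequence converging to $x_0$, contradicting closedness of the graph. Hence for every small enough relatively open connected neighbourhood $W$ of $x_0$ in $K$ the set $W\setminus\{x_0\}$ is disconnected; there is a ``high'' piece $P_h$ carrying a tail of $[z,x_0]$ with $\lim_{P_h\ni y\to x_0}f(y)=+\infty$, and, taking the maximal component $Q$ of $\{y\in K\setminus\{x_0\}:f(y)<f(x_0)+1\}$ (which is open in $K$, hence arcwise connected) with $x_0\in\overline{Q}$ and a tail of $(v_n)$, the restriction $f|Q\cup\{x_0\}$ is continuous, $\lim_{Q\ni y\to x_0}f(y)=f(x_0)$, and the same argument forces $P_h$ and $Q$ to be separated in $W\setminus\{x_0\}$. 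Using arcwise connectedness of $K\setminus\{x_0\}$ and of $Q$, I would build an arc $[z,x_0]'$ from $z$ to $x_0$ that enters $x_0$ through $Q$, so that $f|[z,x_0]'$ is continuous. Now the proof of Lemma \ref{simple_skyhook_factory} goes through unchanged — its only use of an arcwise-connected graph is precisely to produce such an arc $[z,x_0]'$ — giving a skyhook $C=[x',x_0]\cup[x',x_0]'\subset K$ with infinity point $x_0$, where $[x',x_0]\subset[z,x_0]$ and $[x',x_0]'\subset[z,x_0]'$.

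Finally I would locate the skyhook among the complementary domains. By Corollary \ref{circle_disconnect}, $C$ bounds a disc $\hat{B}(C)$, and $B(C)\not\subset K$: near $x_0$ the two sub-arcs of $C$ lie in the disjoint pieces $P_h$ and $Q$ while $B(C)$ fills the plane sector between them, so if a neighbourhood of $x_0$ in $B(C)$ were contained in $K$, then $P_h$ and $Q$ would fail to be separated in $W\setminus\{x_0\}$. Thus $B(C)\setminus K\ne\emptyset$, and each of its components is a bounded component of $\R^2\setminus K$, contained in $B(C)$. I would then apply Lemma \ref{important_lac} with $A=K$ and $E=\hat{B}(C)$ (here $\partial E=C$ is a circle): $K':=K\cap\hat{B}(C)=(K\cap\hat{B}(C))\cup C$ is a locally connected continuum, not separated by any point, with $\R^2\setminus K'$ having a bounded component; $f|K'$ still has a closed graph, $D(f|K')=\{x_0\}$, $(x_0,f(x_0))\in Clo(f|K'\setminus\{x_0\})$; and every bounded component of $\R^2\setminus K'$ is a bounded component of $\R^2\setminus K$ with the same boundary. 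By Theorems \ref{circle_factory} and \ref{zero_components} the complementary domains of $K'$ have simple closed curve boundaries and only finitely many are large, so among the bounded complementary domains of $K'$ whose boundary circle passes through $x_0$ and lies, near $x_0$, between the high and the low strands, there is an innermost one $R$. For this $R$, one arc of $\partial R$ at $x_0$ is a high strand and the other a low one; applying Theorem \ref{exploding} to the former (with $A$ that arc minus $x_0$, from which $x_0$ is locally connectedly accessible) and Corollary \ref{easy_con} to the latter shows that $f\to+\infty$ along one arc of $\partial R$ at $x_0$ and $f\to f(x_0)$ along the other, i.e.\ $\partial R$ is a skyhook with infinity point $x_0$, as desired.

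The step I expect to be the main obstacle is the last selection: ruling out that $x_0$ is approached only by infinitely many ever-smaller complementary domains whose bounding strands themselves accumulate, and genuinely pinning down one domain whose boundary circle passes through $x_0$ with exactly one exploding side. Theorem \ref{zero_components} is what keeps the large domains under control, Lemma \ref{important_lac} is what permits replacing $K$ by the tidier continuum $K\cap\hat{B}(C)$, and the local accessibility results of Section 3 (Theorem \ref{exploding}, Corollary \ref{easy_con}) are what upgrade ``$f$ is unbounded on one side'' to ``$f\to+\infty$ on that side''.
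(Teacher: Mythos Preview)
Your approach differs substantially from the paper's, and the obstacle you flag at the end is a genuine gap, not merely a technicality.

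The paper never tries to build a skyhook inside $K$. It argues by contradiction: if no bounded component $R$ of $\R^2\setminus K$ has $\partial R$ a skyhook with infinity point $x_0$, then for every such $R$ either $f|\partial R$ is continuous, or $x_0\in\partial R$ and $f|\partial R\setminus\{x_0\}$ tends to $+\infty$ from both sides. In each case one extends $f$ continuously over $\Clo(R)$ by Tietze (into $[0,+\infty]$ in the second case), obtaining $\hat f:E\to[0,+\infty]$ on the closed topological disc $E$ bounded by the outer boundary circle of $K$. Using Theorem~\ref{zero_components} one checks that $\hat f$ is continuous on $E\setminus\{x_0\}$ and that its only cluster values at $x_0$ are $f(x_0)$ and $+\infty$. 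Hence on some punctured neighbourhood of $x_0$ in $E$ the values of $\hat f$ avoid $[f(x_0)+1,f(x_0)+2]$; but any arc in $E\setminus\{x_0\}$ joining a low point to a high point must cross that interval by the intermediate value theorem, and such arcs exist because $E$ is a disc. This contradiction finishes the proof without ever singling out a particular complementary domain.

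In your route there are two places where the argument is not complete. First, the existence of a single low component $Q$ with $x_0\in\overline Q$ is asserted, not proved: the points $v_n$ with $f(v_n)\to f(x_0)$ could a priori lie in infinitely many distinct components of $\{f<f(x_0)+1\}\cap(K\setminus\{x_0\})$, none of which individually accumulates at $x_0$, and local connectedness of $K$ alone does not rule this out. Without such a $Q$ you do not obtain the arc $[z,x_0]'$ along which $f$ is continuous, so the mechanism of Lemma~\ref{simple_skyhook_factory} never starts. Second, even granting a skyhook $C\subset K$, the final selection of $R$ is precisely the obstacle you name and do not overcome: you neither exclude that only ever-smaller domains accumulate at $x_0$ with none actually having $x_0$ on its boundary, nor that every domain with $x_0$ on its boundary has both of its arcs at $x_0$ exploding (or both tame). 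The paper's extension-plus-IVT argument is tailored to bypass this selection problem entirely.
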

\begin{proof}
Let $\mathcal{S}$ be a family of all bounded connected components of $\R^2\setminus K$ (by the conditions of this theorem, is not empty). Let $E=\bigcup\mathcal{S} \cup K$. Notice that $E$ is a complement of the unbounded connected component of $\R^2\setminus K$.   
By Theorem \ref{circle_factory}, $E$ is homeomorphic with a closed disk. Assume by contradiction that thesis doesn't hold. We will construct an extension $\hat{f}:E\to[0,+\infty]$ of $f$, which is continuous everywhere expect of the point $x_0$. 

Let $R$ denote an arbitrary bounded connected component of $\R^2\setminus K$. By Theorem \ref{circle_factory}, $R$ is homeomorphic with an open disc. Since we assumed that thesis doesn't hold and the graph is closed, we have two possibilities: (1) $f|\partial R$ is continuous or (2) $x_0\in \partial R$ and $f|\partial R\setminus \{x_0\}$ is continuous and convergent to $+\infty$ at the point $x_0$. If (1) holds, then $f(\partial R)$ is a closed interval, then by Tietze extension theorem, there exists a continuous extension $h$ of $f$ to $Clo(R)$, such that $h(Clo(R))=f(\partial R)$. Let $\hat{f} = h$ on $R$. If (2) holds, we define a helping function $g$, such that $g=f$ on $\partial R$ and $g(x_0)=+\infty$. Note that $g$ is continuous on $\partial R$ and $g(\partial R) = [a,+\infty]$, then again by Tietze extension theorem, there exists a continuous extension $\hat{g}$ of $g$ to $Clo(R)$, such that $\hat{g}(Clo(R))=g(\partial R)$. Let $\hat{f} = \hat{g}$ on $R$. We defined the function $\hat{f}$ on each bounded connected component of $\R^2\setminus K$. Additionally $\hat{f} = f$ on $K$. Thus we defined $\hat{f}:E\to[0,+\infty]$.\\
We will show that $\hat{f}$ is continuous everywhere except of the point $x_0$. Since $\hat{f}$ is continuous on each bounded connected component of $\R^2\setminus K$, it's enough to show that it's continuous on $K\setminus\{x_0\}$. 
take any $z_0\in K\setminus\{x_0\}$ and any sequence $E\ni z_n \to z_0$. Since $\hat{f}=f$ on $K$, it's enough to assume, that $z_n\in\bigcup\mathcal{S}$. 
If almost all elements of $z_n$ are contained in one bounded connected component of $\R^2\setminus K$, then, by the construction, $\hat{f}(z_n)\to\hat{f}(z_0)$. Now, to prove continuity it's enough to show that $\hat{f}(z_n)\to\hat{f}(z_0)$ for $z_n\in R_{n}$, where $R_{n}$ is a sequence of pair-wise disjoint bounded connected components of $\R^2\setminus K$. 
By Theorem \ref{zero_components}, $\lim\limits_{n\to\infty} diam(R_{n}) = 0$. 
We will construct a sequence $z'_{n}$. Take any index $n$. By the construction of $\hat{f}$, if $\hat{f}(z_n) < +\infty$, we can chose such $z'_n\in \partial R_{n}$, that $\hat{f}(z'_n)=\hat{f}(z_n)$; if $\hat{f}(z_n) = +\infty$, we can chose such $z'_n\in \partial R_{n}$, that $\hat{f}(z'_n) > n$.
Note that $\{z'_n\} \subset K$ and $z'_n\to z_0$, thus $\hat{f}(z'_n)\to\hat{f}(z_0)$. By the construction of $z'_n$, $\hat{f}(z_n)\to\hat{f}(z_0)$. We showed continuity of $\hat{f}$ everywhere expect of the point $x_0$.\\
We will show, that at the point $x_0$ the function $\hat{f}$ has exactly two accumulation points, namely $f(x_0)$ and $+\infty$. Take any sequence $E\ni z_n \to x_0$. If almost all elements of $z_n$ are contained in $K$, since the graph of $f$ is closed, the sequence $\hat{f}(z_n)$ has no more than two accumulation points, $f(x_0)$ and $+\infty$. If almost all elements $z_n$ are contained in some bounded connected component of $\R^2\setminus K$, then by the construction of the extension, $\hat{f}(z_n)\to f(x_0)$ or $\hat{f}(z_n)\to +\infty$.
Thus it is enough to assume that $z_n\in R_{n}$, where $R_{n}$ is a sequence of pair-wise disjoint bounded connected components of $\R^2\setminus K$. We will construct a sequence $z'_n$ in exactly the same way as in the construction above. Obviously $\{z'_n\}\subset K$ and by Theorem \ref{zero_components}, $z'_n\to x_0$. The sequence $\hat{f}(z'_n)$ has no more than two accumulation points: $f(x_0)$ and $+\infty$. But by the construction of $z'_n$, $\hat{f}(z_n)$ has exactly the same accumulation points as $\hat{f}(z'_n)$. We showed so far that at the point $x_0$, the function $\hat{f}$ has no more than two accumulation points: $f(x_0)$ and $+\infty$. Thus we have such an open neighbourhood $U$ of $x_0$, that for each $x\in U\cap E$ we have $\hat{f}(x) < f(x_0) + 1$ or $\hat{f}(x) > f(x_0) + 2$. Since $x_0$ is a point of discontinuity of $f$ and $(x_0,f(x_0))\in Clo(f|K\setminus\{x_0\})$, $\hat{f}$ has exactly two accumulation points at $x_0$: $f(x_0)$ and $+\infty$. Therefore, we can choose $u_1,u_2\in U\cap E$ such that $\hat{f}(u_1) < f(x_0) + 1$ i $\hat{f}(u_2) > f(x_0) + 2$. Since $E$ is homeomorphic with a closed disc and by Theorem \ref{schonflies}, we can as well require that there exists an arc $[u_1, u_2]\subset (U\cap E)\setminus \{x_0\}$.  Note that $\hat{f}$ is continuous on $[u_1,u_2]$. Thus there exists $u\in [u_1, u_2]$, such that $\hat{f}(u) = f(x_0) + 1$. This contradicts that $u\in U\cap E$.                  
\end{proof}

For further applications of the above lemma, it will be convenient to keep in mind the following remark, which follows directly from Theorem \ref{circle_factory}.

\begin{remark}
If $K\subset \R^2$ is locally connected continuum not separated by any point and $Int K = \emptyset$, then $\R^2\setminus K$ has at least one bounded connected component. 
\end{remark}

\begin{lemma}\label{circles_intersect}
If $C_1, C_2\subset \R^2$ are homeomorphic with a circle, $C_1\cap B(C_2) \not=\emptyset$ and $C_2\cap B(C_1) \not=\emptyset$, then $|C_1\cap C_2|\geq 2$. 
\end{lemma}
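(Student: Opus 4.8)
\emph{Proof proposal.} My plan is to prove the contrapositive: assuming $|C_1\cap C_2|\le 1$, I would show that $C_1=C_2$, which is absurd because then $C_1\cap C_2=C_1$ is an entire circle and in particular has more than one point. Throughout I keep the notation of Corollary \ref{circle_disconnect} and write $U_i=\R^2\setminus\hat B(C_i)$ for the unbounded complementary component of $C_i$, so that $\R^2$ is the disjoint union $B(C_i)\sqcup C_i\sqcup U_i$, and I use freely the already recorded identities $\overline{B(C_i)}=\hat B(C_i)$, $\mathrm{Int}(\hat B(C_i))=B(C_i)$.

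\emph{Step 1 (localising the hypotheses).} Put $A_1=C_1\setminus C_2$ and $A_2=C_2\setminus C_1$. The key observation is that, under the assumption $|C_1\cap C_2|\le 1$, each $A_i$ is connected: it is either the full circle $C_i$ or $C_i$ with a single point removed. Now pick $q\in C_1\cap B(C_2)$; since $B(C_2)\cap C_2=\emptyset$ we have $q\notin C_2$, hence $q\in A_1$. As $A_1$ is connected and disjoint from $C_2$, it lies inside one of the two open components of $\R^2\setminus C_2$, and as it meets $B(C_2)$ we conclude $A_1\subset B(C_2)$. Symmetrically $A_2\subset B(C_1)$.

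\emph{Step 2 (the unbounded components coincide).} From $C_2=A_2\cup(C_1\cap C_2)\subset B(C_1)\cup C_1=\hat B(C_1)$ we see that $U_1$ is connected, unbounded, and disjoint from $C_2$; hence $U_1$ lies in one component of $\R^2\setminus C_2$, and being unbounded it cannot be the bounded one, so $U_1\subset U_2$. By the symmetric argument (using $A_1\subset B(C_2)$) we get $U_2\subset U_1$, whence $U_1=U_2$. Taking complements gives $\hat B(C_1)=\hat B(C_2)$; taking interiors gives $B(C_1)=B(C_2)$; and therefore $C_1=\hat B(C_1)\setminus B(C_1)=\hat B(C_2)\setminus B(C_2)=C_2$, the desired contradiction.

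I expect the only genuine subtlety to lie in Step 1 — namely, recognising that "$|C_1\cap C_2|\le 1$" is precisely the hypothesis needed to keep $C_i\setminus C_j$ connected, so that the Jordan-curve separation of Corollary \ref{circle_disconnect} can be applied to it; with two or more intersection points the complement may disconnect and the argument collapses. Everything after that is routine bookkeeping with the decomposition $\R^2=B(C_i)\sqcup C_i\sqcup U_i$, and in particular it needs no use of Theorem \ref{schonflies} beyond the corollaries already stated.
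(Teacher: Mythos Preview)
Your proof is correct. Both your argument and the paper's rest on the same elementary observation: a Jordan curve minus at most one point is still connected, so it must lie entirely in one complementary component of the other curve. The paper exploits this more directly: it first argues that $C_1$ meets both $B(C_2)$ (given) and $\R^2\setminus\hat B(C_2)$ (since $C_2\cap B(C_1)\neq\emptyset$ forces $C_1\not\subset B(C_2)$), concludes $C_1\cap C_2\neq\emptyset$ by connectedness of $C_1$, and then observes that if $C_1\cap C_2=\{x\}$ the connected set $C_1\setminus\{x\}$ would meet both components of $\R^2\setminus C_2$ --- i.e., $x$ would separate the circle $C_1$, which is impossible. You instead use the observation symmetrically to get $A_i\subset B(C_j)$, and then take the extra step of comparing the unbounded components to force $C_1=C_2$. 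The paper's route is a line shorter because it stops at the separation contradiction; yours is tidier in that you never need to argue that $C_1$ actually reaches the \emph{unbounded} side of $C_2$ (a step the paper asserts somewhat tersely).
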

\begin{proof}
Since $C_2 \cap B(C_1) \not=\emptyset$, $C_1\not\subset B(C_2)$. Thus $C_1 \cap (\RR^2\setminus \hat{B}(C_2)) \not=\emptyset$. Since $\partial B(C_2) = C_2$, by conectedness of $C_1$, $C_1 \cap C_2 \not= \emptyset$. If $C_1 \cap C_2 = \{x\}$, it would mean that $x$ separates $C_1$, contradiction. Thus $|C_1\cap C_2|\geq 2$. 
\end{proof}

\begin{theorem}\label{small_skyhook}
If $f:\R^2\to [0,+\infty)$ has a closed graph, $C_1$ and $C_2$ are skyhooks, then there exists a skyhook $C$, such that $B(C)$ is a connected component of $\R^2 \setminus (C_1 \cup C_2)$ and $C\subset \hat{B}(C_1)$.
\end{theorem}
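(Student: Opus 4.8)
The plan is to obtain $C$ as one of $C_1,C_2$ in the unlinked or nested configurations, and otherwise as the boundary of a complementary domain furnished by Lemma~\ref{force_skyhook} applied to a suitable sub-continuum of $C_1\cup C_2$.

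First I would dispose of the degenerate configurations. If $C_1\cap C_2=\emptyset$, or $|C_1\cap C_2|=1$, or $C_2\cap B(C_1)=\emptyset$, then by elementary properties of Jordan curves one of $B(C_1)$, $B(C_2)$ is already a connected component of $\R^2\setminus(C_1\cup C_2)$ lying in $\hat{B}(C_1)$, so $C=C_1$ or $C=C_2$ works. If $C_2\subset\hat{B}(C_1)$ and $C_1\cap B(C_2)=\emptyset$, then $C=C_2$ works (note $\hat{B}(C_2)\subset\hat{B}(C_1)$); if $C_2\subset\hat{B}(C_1)$ but $C_1\cap B(C_2)\ne\emptyset$, I would replace $(C_1,C_2)$ by $(C_2,C_1)$, which falls into the remaining case and for which $\hat{B}(C_2)\subset\hat{B}(C_1)$. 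Thus from now on $C_2\cap B(C_1)\ne\emptyset$ and $C_2\not\subset\hat{B}(C_1)$; then $|C_1\cap C_2|\ge 2$, $C_2\cap B(C_1)\subsetneq C_2$, and I write $x_0,x_0'$ for the infinity points of $C_1,C_2$.

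The core construction is $K:=C_1\cup\clo{C_2\cap B(C_1)}$. Since $B(C_1)$ is open, $C_2\cap B(C_1)$ is a disjoint union of open arcs $\gamma_i$ of $C_2$, each $\clo{\gamma_i}$ being an arc that meets $C_1$ exactly in its two endpoints; being pairwise disjoint in the circle $C_2$ they satisfy $\operatorname{diam}\gamma_i\to0$. Using this null condition I would verify that $K$ is a locally connected continuum (local connectedness at the points of $C_1\cap C_2$ is exactly where the shrinking of the $\gamma_i$ is needed), that no single point separates $K$, and that $\operatorname{Int}K=\emptyset$ as $K\subset C_1\cup C_2$; by the remark following Lemma~\ref{force_skyhook}, $\R^2\setminus K$ then has a bounded component, and since its only unbounded component is $\R^2\setminus\hat{B}(C_1)$, every bounded component lies in $B(C_1)$. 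Also $f|K$ has a closed graph, and since $f|C_1$ is continuous off $x_0$ and $f|C_2$ off $x_0'$, a short limit argument gives $D_{f|K}\subset\{x_0,x_0'\}$, with $f|K$ always discontinuous at $x_0$. If in addition $f|K$ is continuous at $x_0'$ (which covers, in particular, $x_0'=x_0$), then $D_{f|K}=\{x_0\}$, and $(x_0,f(x_0))$ lies in the closure of the graph of $f|(K\setminus\{x_0\})$ — approach $x_0$ along $C_1$ from the side where $f\to f(x_0)$ — so Lemma~\ref{force_skyhook} produces a bounded component $R$ of $\R^2\setminus K$ whose boundary $C:=\partial R$ is a skyhook. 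Then $R\subset B(C_1)$ forces $C\subset\hat{B}(C_1)$; since $(C_1\cup C_2)\cap B(C_1)=C_2\cap B(C_1)\subset K$ and $R\subset B(C_1)\setminus K$, the set $R$ is disjoint from $C_1\cup C_2$; being open, connected, with $\partial R\subset C_1\cup C_2$, it is a full connected component of $\R^2\setminus(C_1\cup C_2)$; and a bounded set whose boundary is the simple closed curve $C$ must coincide with $B(C)$, so $B(C)=R$ as required.

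It remains to treat the case in which $f|K$ is discontinuous at some $x_0'\ne x_0$. Here a short argument with the closed graph — if the intersection points of $C_1$ and $C_2$ lying on the blow-up side of $C_2$ accumulated at $x_0'$, those points would carry $f\to\infty$ along $C_2$ and $f\to f(x_0')$ along $C_1$ — shows that the component $\gamma$ of $C_2\cap B(C_1)$ carrying the blow-up approach to $x_0'$ satisfies $x_0'\in\clo{\gamma}$, has both endpoints of $\clo{\gamma}$ on $C_1$, and carries $f\to\infty$ toward $x_0'$. I would then choose an arc $\alpha$ of $C_1$ joining those two endpoints on which $f$ is continuous — the arc avoiding $x_0$ if $x_0$ is not one of the endpoints, otherwise the arc leaving $x_0$ on its finite side — and set $C^\ast:=\clo{\gamma}\cup\alpha$, a skyhook with infinity point $x_0'$, contained in $C_1\cup C_2$ and in $\hat{B}(C_1)$. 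Running the previous paragraph with $K^\ast:=C^\ast\cup\clo{C_2\cap B(C^\ast)}$ in place of $K$ then works: $D_{f|K^\ast}=\{x_0'\}$ since $K^\ast$ meets $C_1$ only inside $\alpha$, where $f$ is continuous, while $f|C_2$ is discontinuous only at $x_0'$; bounded components of $\R^2\setminus K^\ast$ lie in $B(C^\ast)\subset\hat{B}(C_1)$; and such a component $R$ is again disjoint from $C_1\cup C_2$, hence a connected component of $\R^2\setminus(C_1\cup C_2)$ equal to $B(\partial R)$, with $\partial R$ a skyhook inside $\hat{B}(C_1)$. The steps I expect to be most delicate are the local connectedness of $K$ at the points of $C_1\cap C_2$ — which genuinely uses $\operatorname{diam}\gamma_i\to0$ — and, in the last step, checking that $C^\ast$ really is a simple closed curve inside $C_1\cup C_2$ bounding a region in $\hat{B}(C_1)$ and that the chosen $\alpha$ indeed forces $D_{f|K^\ast}=\{x_0'\}$, with the positional degeneracies (infinity points coinciding with intersection points, and so on) absorbed into the choice of $\alpha$.
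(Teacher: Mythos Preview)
Your argument is essentially correct and follows the same overall architecture as the paper: form the continuum $K=C_1\cup(\hat B(C_1)\cap C_2)$ (your $K$ equals the paper's $E$), feed it to Lemma~\ref{force_skyhook} when it has a single discontinuity point, and otherwise pass to a smaller continuum with a single discontinuity point before invoking the lemma. The two places where you diverge from the paper are worth noting.

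First, for local connectedness and non-separation of $K$ (and of $K^\ast$), you argue directly via the null-sequence of arcs $\gamma_i$. The paper instead invokes Lemma~\ref{important_lac} with $A=C_2$ and $E=\hat B(C_1)$, which gives both properties at once; since that lemma is already proved, this is the shorter route and spares you the delicate pointwise check at accumulation points of $C_1\cap C_2$.

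Second, and more substantively, in the two-discontinuity case you build the auxiliary skyhook $C^\ast=\clo\gamma\cup\alpha$ by hand from a specific arc $\gamma$ of $C_2\cap B(C_1)$ and an arc $\alpha$ of $C_1$, with a small case analysis on the position of $x_0$ and $x_0'$. The paper proceeds abstractly: it uses Theorem~\ref{force_infinity_point} to extract a semi-skyhook in $E$ ending at $x_2$, upgrades it to a skyhook $C_0\subset E$ via Lemma~\ref{simple_skyhook_factory}, and then sets $E'=(\hat B(C_0)\cap E)\cup C_0$; Lemma~\ref{important_lac} again gives the structural properties, and a short contradiction argument shows $x_1\notin E'$ so that $D_{f|E'}=\{x_2\}$. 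Your explicit construction is perfectly valid and arguably more transparent geometrically, but it forces you to handle positional degeneracies (e.g.\ $x_0$ an endpoint of $\clo\gamma$, the location of the finite side of $C_2$) one by one; the paper's route absorbs all of these into the existing machinery. Either way, the final step---showing the resulting bounded component $R$ is already a component of $\R^2\setminus(C_1\cup C_2)$ with $R=B(\partial R)\subset B(C_1)$---is identical in both proofs.
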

\begin{proof}
If $B(C_1)$ is a connected component of $\R^2\setminus(C_1\cup C_2)$, then the thesis is trivial. If $B(C_2)$ is a connected component of $\R^2\setminus(C_1\cup C_2)$ and $B(C_1)$ is not, then $B(C_2)\subset \hat{B}(C_1)$. Thus for $C=C_2$ thesis holds.
Therefore, it's enough to check the case: $C_1\cap B(C_2) \not=\emptyset$ and $C_2\cap B(C_1) \not=\emptyset$.
By Lemma \ref{circles_intersect}, $|C_1\cap C_2|\geq 2$. Let $E = (\hat{B}(C_1)\cap C_2)\cup C_1$. By Lemma \ref{important_lac}, $E$ is locally connected continuum not separated by any point. Note that the graph of $f|E$ is closed and locally connected. Let $x_i$ be the point of infinity of $C_i$ for $i=1,2$. If $f|{E\setminus\{x_1\}}$ is continuous, then by Theorem \ref{force_skyhook}, the thesis holds. Assume that $f|{E\setminus\{x_1\}}$ is discontinuous. Then $x_2\in E\setminus\{x_1\}$.  
Obviously there exists $\delta_0 > 0$, such that $x_1\not\in B(x_2,\delta_0)$ and $f|((B(x_2,\delta_0)\cap E)\setminus \{x_2\})$ is continuous. Thus by Lemma \ref{force_infinity_point}, there exists a semi-skyhook $[z,x_2]$ with the infinity point $x_2$, such that $[z,x_2] \subset E\cap B(x_2,\delta_0)$. Since the graph of $f|E$ is arc-wise connected, by Theorem \ref{simple_skyhook_factory}, there exists a skyhook $C_0\subset E$ with the infinity point $x_2$. Let $E' = (\hat{B}(C_0)\cap E)\cup C_0$. By Lemma \ref{important_lac}, $E'$ is locally connected continuum not separated by any point. 
We will show that $f|{E'\setminus \{x_2\}}$ is continuous. Assume by contradiction, that it's discontinuous. So $x_1\in E'\setminus\{x_2\}$. Obviously there exists $\delta > 0$, such that $x_2\not\in \hat{B}(x_1,\delta)$, thus $f|(B(x_1,\delta)\cap E')\setminus \{x_1\}$ is continuous. Therefore, by Lemma \ref{force_infinity_point}, there exists a semi-skyhook $[z',x_1]'\subset E'\cap B(x_1,\delta)$ with the infinity point $x_1$. Since $f|C_2\cap \hat{B}(x_1,\delta)$ is continuous and thus bounded, there exists a semi-skyhook $[y,x_1]'\subset [z',x_1]'\cap C_1$ with the infinity point $x_1$. 
Note that $[y,x_1]'\subset E'\subset \hat{B}(C_0) \subset \hat{B}(C_1)$, thus $[y,x_1]'\subset C_0$. Thus contradiction, since $x_2$ is the infinity point of $C_0$ and $x_1\in E'\setminus\{x_2\}$. We showed that $f|E'\setminus \{x_2\}$ is continuous. Thus, by Theorem \ref{force_skyhook}, there exists a skyhook $C$, such that $B(C)$ is a connected component of $\R^2\setminus E'$. Since $B(C)\subset B(C_0)\subset B(C_1)$, we have $B(C)\cap (C_1\cup C_2) = B(C) \cap (B(C_1)\cap (C_1\cup C_2)) \subset B(C) \cap E = B(C) \cap (B(C_0) \cap E) \subset B(C) \cap E' = \emptyset$. Thus since $C\subset E' \subset C_1\cup C_2$, it's easy to show that $B(C)$ is a connected component of $\R^2\setminus (C_1\cup C_2)$.        
\end{proof}

\begin{corollary}\label{n_small_skyhook}
If $f:\R^2\to [0,+\infty)$ has a closed graph, $C_1, C_2, \dots, C_n$ are skyhooks, then there exists a skyhook $C$ such that $B(C)$ is a connected component of $\R^2 \setminus \bigcup\limits_{i=1}^n C_i$ and $C\subset \hat{B}(C_1)$. 
\end{corollary}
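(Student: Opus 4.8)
The plan is a straightforward induction on $n$. For $n=1$ take $C=C_1$: by Corollary \ref{circle_disconnect}, $B(C_1)$ is a connected component of $\R^2\setminus C_1$, and $C_1\subset\hat{B}(C_1)$ trivially. For the inductive step, assume the statement for $n-1$ and let $C_1,\dots,C_n$ be skyhooks. Apply the inductive hypothesis to $C_1,\dots,C_{n-1}$ to get a skyhook $C'$ with $B(C')$ a connected component of $\R^2\setminus\bigcup_{i=1}^{n-1}C_i$ and $C'\subset\hat{B}(C_1)$; then apply Theorem \ref{small_skyhook} to the pair of skyhooks $C',C_n$ to get a skyhook $C$ with $B(C)$ a connected component of $\R^2\setminus(C'\cup C_n)$ and $C\subset\hat{B}(C')$. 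This $C$ is the candidate.

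Two things must then be checked: that $C\subset\hat{B}(C_1)$, and that $B(C)$ is in fact a connected component of the larger complement $\R^2\setminus\bigcup_{i=1}^nC_i$. Both rest on an elementary nesting principle: if $D,D'$ are circles with $D\subset\hat{B}(D')$, then $B(D)\subset\hat{B}(D')$, because $\R^2\setminus\hat{B}(D')$ is a connected unbounded set disjoint from $D$ and hence lies in the unbounded complementary component of $D$. Applying this to $C'\subset\hat{B}(C_1)$ gives $\hat{B}(C')\subset\hat{B}(C_1)$, whence $C\subset\hat{B}(C')\subset\hat{B}(C_1)$; applying it to $C\subset\hat{B}(C')$ gives $B(C)\subset\hat{B}(C')$, and since $B(C)$ is disjoint from $C'$ (it is a component of $\R^2\setminus(C'\cup C_n)$), in fact $B(C)\subset B(C')$. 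Now $B(C')$ is disjoint from $\bigcup_{i=1}^{n-1}C_i$ and $B(C)$ is disjoint from $C_n$, so $B(C)\cap\bigcup_{i=1}^nC_i=\emptyset$; on the other side $\partial B(C)=C\subset C'\cup C_n$, and $C'=\partial B(C')\subset\bigcup_{i=1}^{n-1}C_i$ — the boundary of a connected component of the complement of a closed set is always contained in that closed set — so $C\subset\bigcup_{i=1}^nC_i$.

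It remains to invoke the general fact that a nonempty connected open set $U$ with $U\cap F=\emptyset$ and $\partial U\subset F$, for $F$ closed, is a connected component of $\R^2\setminus F$: $U$ lies in some component $W$ of $\R^2\setminus F$ (open, as $\R^2$ is locally connected), $U$ is open in $W$, and $U$ is closed in $W$ since $\clo{U}\cap W=(U\cup\partial U)\cap W=U$, so $U=W$ by connectedness. Taking $U=B(C)$ and $F=\bigcup_{i=1}^nC_i$ finishes the induction. I expect the only real work to be in assembling this point-set bookkeeping correctly — the two nesting facts and the ``component upgrade'' lemma — since all the genuinely substantive content about skyhooks is already packaged in Theorem \ref{small_skyhook}.
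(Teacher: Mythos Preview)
Your proof is correct and follows essentially the same inductive approach as the paper: apply the inductive hypothesis to $C_1,\dots,C_{n-1}$, then Theorem~\ref{small_skyhook} to $C'$ and $C_n$, and verify the nesting and component conditions. Your version is in fact more carefully justified than the paper's, which asserts $B(C)\subset B(C')$ and $C\subset\hat{B}(C')\subset\hat{B}(C_1)$ without spelling out the nesting principle or the ``component upgrade'' argument you supply.
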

\begin{proof}
Assume that the thesis holds for $n-1$. There exists such a skyhook $C'$ such that $C'\subset \bigcup\limits_{i=1}^{n-1} C_i$ and $B(C')\cap \bigcup\limits_{i=1}^{n-1} C_i = \emptyset$ i $C'\subset \hat{B}(C_1)$. By Theorem \ref{small_skyhook}, there exists a skyhook $C$, such that $C\subset C' \cup C_n$, $B(C) \cap (C' \cup C_n) = \emptyset$ and $C\subset \hat{B}(C')$. Since $B(C)\subset B(C')$, we have $B(C) \cap \bigcup\limits_{i=1}^{n-1} C_i = \emptyset$. On the other hand $B(C) \cap C_n = \emptyset$, thus $B(C) \cap \bigcup\limits_{i=1}^{n} C_i = \emptyset$. Furthermore, $C \subset  \bigcup\limits_{i=1}^{n} C_i$ and $C\subset\hat{B}(C')\subset\hat{B}(C_1)$, thus the thesis holds.     
\end{proof}

\begin{lemma}\label{comp_existence}
If $f:\R^2\to [0, +\infty)$ has a closed, connected and locally connected graph, $D_f$ is a locally connected continuum, not separated by any point, and $x\in D_f$ is an isolated point of discountinuity of the function $f|D_f$, then there exists a connected component $S$ of $\R^2\setminus D_f$, such that $\partial S$ is homeomorphic with the unit circle, $x\in \partial S$, and there exist arcs $[z_1, x], [x,z_2]\subset D_f\cap \partial S$ such that $[z_1, x]$ is a semi-skyhook with the infinity point $x$ and $f|[x,z_2]$ is continuous.  
\end{lemma}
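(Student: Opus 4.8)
The plan is as follows. First I would record preliminaries: by Theorem~\ref{dense_con}, $C_f$ is open and dense, so $Int\, D_f=\emptyset$ and, by the remark following Lemma~\ref{force_skyhook}, $\R^2\setminus D_f$ has at least one bounded connected component; also, being a locally connected continuum, $D_f$ is locally arcwise connected. Since $x$ is an isolated point of the discontinuity set of $f|D_f$, I fix $\delta_0>0$ so that $x$ is the only discontinuity point of $f$ on $D_f\cap\hat B(x,\delta_0)$. The restriction of $f$ to $D_f\cap B(x,\delta_0)$ then has a closed graph, is locally arcwise connected at $x$, and has $x$ as its only discontinuity point, so Theorem~\ref{force_infinity_point} (with $Y=[0,+\infty)$ locally compact) yields a semi-skyhook $[z,x]\subset D_f\cap B(x,\delta_0)$ with infinity point $x$; this records that a semi-skyhook towards $x$ lives inside $D_f$ and will guide the construction below.

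The core of the argument is a reduction to Theorem~\ref{force_skyhook}. I would aim to build a subcontinuum $K\subset D_f$ such that: (i) $K$ is a locally connected continuum not separated by any point; (ii) $K$ is a neighbourhood of $x$ in $D_f$ and $K\subset\hat B(x,\delta_0)$; (iii) $\R^2\setminus K$ has a bounded component $R$ with $R\cap D_f=\emptyset$. Granting this, $f|K$ has a closed graph, $D_{f|K}=\{x\}$ (by (ii) and the choice of $\delta_0$), and $(x,f(x))\in Clo\big(f|(K\setminus\{x\})\big)$, so Theorem~\ref{force_skyhook} produces a bounded component $R$ of $\R^2\setminus K$ whose boundary $C:=\partial R$ is a skyhook with infinity point $x$. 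I expect the hard part to be twofold. First, one must establish $(x,f(x))\in Clo\big(f|(D_f\setminus\{x\})\big)$ — needed to feed Theorem~\ref{force_skyhook} and ultimately what makes an arc in $D_f$ along which $f$ is continuous at $x$ exist at all; this is exactly where connectedness and local connectedness of the graph are used, since otherwise $f$ would tend to $+\infty$ along every net in $D_f\setminus\{x\}$ converging to $x$, and a short argument with a connected neighbourhood of $(x,f(x))$ in the graph, which must then meet the graph only over points of $C_f$ near $x$, yields a contradiction with the closedness of the graph. Second, one must construct $K$: I would start from a small closed ball with respect to a convex metric on the Peano continuum $D_f$ — a Peano subcontinuum of $D_f$ inside $\hat B(x,\delta_0)$ that is a neighbourhood of $x$ — and then repair it, using that components of $\R^2\setminus D_f$ accumulate at $x$ (a consequence of $Int\, D_f=\emptyset$) and Lemma~\ref{important_lac}, so as to remove its cut points while keeping a component of $\R^2\setminus D_f$ captured strictly in its interior, which is (iii). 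The case where $x$ lies on the outer boundary circle of $D_f$ must be handled on the side, the component $S$ of the conclusion then being the unbounded component of $\R^2\setminus D_f$.

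Once the skyhook $C$ is in hand, the statement follows formally. Since $C$ is a Jordan curve and $R$ is a non-empty, open, bounded subset of $\R^2$ which is also closed in $B(C)$, we have $R=B(C)$; also $C=\partial R\subset K\subset D_f$ and $x\in C$. By (iii), $B(C)=R$ misses $D_f$, so $R$ lies in a single component $S$ of $\R^2\setminus D_f$; being connected, disjoint from $C\subset D_f$, and meeting $B(C)$, $S$ is contained in $B(C)$, hence $S=R$ and $\partial S=C$ is a Jordan curve through $x$. Finally, fixing a homeomorphism $h\colon\Sph_1\to C$ witnessing the skyhook, with $h(1)=x$ and $f(h(e^{i\phi}))\to+\infty$ as $\phi\to2\pi$, I take, for small $\e>0$, the arc $h\big(\{e^{i\phi}:\phi\in[2\pi-\e,2\pi]\}\big)\subset C$, which is a semi-skyhook with infinity point $x$, and the arc $h\big(\{e^{i\phi}:\phi\in[0,\e]\}\big)\subset C$, on which $f$ restricts to a continuous function (continuous at $x$ because $\phi\mapsto f(h(e^{i\phi}))$ is continuous at $0$ with value $f(x)$); both lie in $C=\partial S\subset D_f$ and serve as the arcs $[z_1,x]$ and $[x,z_2]$ in the conclusion.
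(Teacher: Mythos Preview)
Your localisation strategy has a genuine gap. You want a subcontinuum $K\subset D_f$ inside $\hat B(x,\delta_0)$ that is not separated by any point and whose complement has a bounded component missing $D_f$. But $D_f$ need have no local cycles near $x$: take for instance $D_f$ equal to a single Jordan curve. Then $D_f\cap\hat B(x,\delta_0)$ is an arc for small $\delta_0$, every subcontinuum of it is again an arc, and an arc is separated by each of its interior points; there is simply no $K\subset D_f\cap\hat B(x,\delta_0)$ satisfying (i), and ``repairing cut points'' cannot help because there is nothing in $D_f$ near $x$ with which to close a loop. Even in cases where some $K$ can be built, there is a second gap: Lemma~\ref{force_skyhook} returns \emph{some} bounded component $R$ of $\R^2\setminus K$ whose boundary is a skyhook, while your conclusion needs that particular $R$ to be disjoint from $D_f$. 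Your hypothesis (iii) only asserts that \emph{one} bounded component of $\R^2\setminus K$ misses $D_f$, not that the one produced by the lemma does; your sentence ``By (iii), $B(C)=R$ misses $D_f$'' is therefore unjustified.

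The paper avoids both problems by not localising $K$ at all. It takes $K=D_f$ itself, which already satisfies the structural hypotheses of Lemma~\ref{force_skyhook}, and fixes the \emph{function} instead: using that $x$ is an isolated discontinuity of $f|D_f$, it applies the Tietze extension theorem to $f$ restricted to a closed neighbourhood of $x$ in $D_f\setminus\{x\}$, obtaining a function $\hat g:D_f\to[0,+\infty)$ with $\hat g=f$ near $x$, $\hat g$ continuous on $D_f\setminus\{x\}$, and $\hat g(x)=f(x)$. The step $(x,f(x))\in Clo\big(f|(D_f\setminus\{x\})\big)$, which you correctly identify as essential, is proved first (via Mazurkiewicz--Moore applied to the locally connected graph, producing an arc in the graph landing at $(x,f(x))$, and then Theorem~\ref{exploding}/Corollary~\ref{easy_con} to transfer this to $\partial R\subset D_f$). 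Lemma~\ref{force_skyhook} applied to $\hat g$ on $K=D_f$ then yields directly a bounded component $S$ of $\R^2\setminus D_f$ whose boundary is a skyhook for $\hat g$ with infinity point $x$; since $\hat g=f$ on $D_f$ near $x$, the two short arcs of $\partial S$ at $x$ give the required semi-skyhook and continuity arc for $f$ itself.
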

\begin{proof}
First we will show that $(x,f(x))\in Clo(f|(D_f\setminus\{x\}))$. Assume by contradiction that $(x,f(x))\not\in Clo(f|(D_f\setminus\{x\}))$. Chose on open neighbourhood $U$ of $x$, such that $f|(D_f\setminus\{x\})$ is continuous on $Clo(U)\cap(D_f\setminus\{x\})$. By the connectedness of the graph of $f$, we have a sequence $U\ni x_n\to x$, such that $x_n\not=x$ and $f(x_n)\to f(x)$. Obviously almost all $x_n\in U\setminus D_f$. But by Mazurkiewicz-Moore Theorem, the graph of $f$ is locally arcwise-connected. Therefore we can chose some index $k$, such that there exists an arc $[x_k, x]$, such that $f|[x_k,x]$ is continuous. Since $(x,f(x))\not\in Clo(f|(D_f\setminus\{x\}))$, $[x_k, x)\in U\setminus D_f$. Since $[x_k, x)$ is connected, there exists a connected component $R$ of $\R^2\setminus D_f$, such that $x\in \partial R$. By Theorem \ref{circle_factory}, $\partial R$ is homeomorphic with the unit circle. Therefore $x$ is locally connectedly accessible from $R$. Thus by Theorem \ref{exploding}, $x\in R^f_0$. Since $R^f_0$ is open in a relative topology of $Clo(R)$ and by Corollary \ref{easy_con}, $f|Clo(R)$ is continuous on $R^f_0$, we showed $(x,f(x))\in Clo(f|(D_f\setminus\{x\}))$.\\
By Tietze extension theorem we can extend $f|(Clo(U)\cap(D_f\setminus\{x\}))$ to the continuous function $g:D_f\setminus\{x\}\to[0, +\infty)$. Let $\hat{g}:D_f\to [0, +\infty)$ be an extension of $g$ such that $\hat{g}(x) = f(x)$. Thus $\hat{g} = f$ on $Clo(U)\cap D_f$. For the function $\hat{g}$ conditions of Lemma \ref{force_skyhook} hold. Thus there exists a connected component $S$ of $\R^2\setminus D_f$, such that $\partial S$ is a skyhook with respect to $\hat{g}$, which completes the proof.
\end{proof}

\section{Network}

\begin{lemma}\label{too_many}
If $f:\R^2\to[0,+\infty)$ has a closed, connected and locally connected graph, $D_f$ is a non-empty locally connected continuum not separated by any point and $f|D_f$ is discontinuous, then the function $f|D_f$ has infinitely many points of discontinuity.
\end{lemma}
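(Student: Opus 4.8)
The plan is to argue by contradiction. Write $E\subseteq D_f$ for the set of points at which $f|D_f$ is discontinuous; by hypothesis $E\neq\emptyset$, and since a nonempty subset of a $T_1$ space that has no point isolated in it is infinite, it suffices to show that $E$ has no isolated point. So assume toward a contradiction that $x$ is isolated in $E$; equivalently, $f|D_f$ is continuous on $W\setminus\{x\}$ for some neighbourhood $W$ of $x$ in $D_f$, i.e.\ $x$ is an isolated point of discontinuity of $f|D_f$ in the sense of Lemma~\ref{comp_existence}. Applying Lemma~\ref{comp_existence} to $x$ we obtain a connected component $S$ of $\R^2\setminus D_f$ whose boundary $\partial S$ is a circle, with $x\in\partial S$, together with arcs $[z_1,x],[x,z_2]\subseteq D_f\cap\partial S$ such that $[z_1,x]$ is a semi-skyhook with infinity point $x$ and $f|[x,z_2]$ is continuous. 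Since $S$ is a component of $\R^2\setminus D_f$ we have $S\subseteq C_f$; by Theorem~\ref{circle_factory} and the corollaries following Theorem~\ref{schonflies}, $\overline{S}$ is a closed disc with boundary circle $\partial S$, $S=B(\partial S)$, and $\partial S$ is locally connectedly accessible from $S=B(\partial S)$.

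Two facts about $S$ and $f$ will be used. \emph{(a)} Because $[z_1,x)\subseteq\overline{S}$ and $f\to+\infty$ along $[z_1,x)$ towards $x$, the restriction $f|\overline{S}$ is discontinuous at $x$; hence $x\notin S^f_0$ by Corollary~\ref{easy_con}, so $x\in S^f_\infty$ (Definition~\ref{algebra}), and then Theorem~\ref{exploding} (with $A=S\subseteq C_f$, using that $x$ is locally connectedly accessible from $S$) gives
\[
\lim_{S\ni u\to x}f(u)=+\infty .
\]
\emph{(b)} $\partial S\not\subseteq S^f_\infty$. Indeed, if $\partial S\subseteq S^f_\infty$, then by Theorem~\ref{exploding} $\lim_{S\ni u\to z}f(u)=+\infty$ for every $z\in\partial S$; consequently the graph of $f|S$ is clopen in the graph of $f$ — it is open because $S$ is open in $\R^2$, and it is closed because its closure in $\R^2\times\R$ acquires no points over $\partial S$ (the limits of $f$ along sequences in $S$ tending to boundary points are $+\infty$) and none over $S$ (continuity of $f$ on $S\subseteq C_f$) — which, since the graph of $f$ is connected and nonempty, would force $S=\R^2$, absurd.

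The core of the argument is then to splice, inside $\overline{S}$, a Jordan curve $C$ that is a skyhook with respect to $f$ with infinity point $x$ and satisfies $B(C)\subseteq S$. Near $x$ such a curve must consist of a ``blow-up side'' on which $f\to+\infty$ at $x$ and a ``continuous side'' on which $f\to f(x)$; a sub-arc of $[x,z_2]$ serves as the continuous side, while the blow-up side may be taken either as a sub-arc of $[z_1,x]$ or, using fact~(a), as an arc running from an interior point of $S$ straight to $x$. The curve is closed by an arc $\tau$ whose interior lies in $S\subseteq C_f$; for $f$ to remain continuous where $\tau$ meets $\partial S$, the junction points of $\tau$ on $\partial S$ must lie in $S^f_0$, so that $f|\overline{S}$ is continuous there (Corollary~\ref{easy_con}). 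To locate such junctions, observe that by Theorem~\ref{neither_or} — applicable because $\partial S$ is locally connectedly accessible from $S=B(\partial S)$ — every connected $f$-continuous sub-arc of $\partial S$ lies entirely in $S^f_0$ or entirely in $S^f_\infty$; hence the $S^f_0$/$S^f_\infty$ labelling of $\partial S$ can change only at the finitely many points of $E$ lying on $\partial S$ (here the isolation of $x$ is used), and by fact~(b) at least one of the resulting maximal $f$-continuous sub-arcs is labelled $S^f_0$. Routing $\tau$ so that it re-enters $\partial S$ inside such a sub-arc — and, where the stretch of $\partial S$ one must traverse from $x$ to that sub-arc runs into another point of $E$, either passing it (when $f|\overline{S}$ still allows a continuous passage there) or detouring through $S$ — produces the desired $f$-skyhook $C$; since $C\subseteq\overline{S}$ and $C\cap\partial S$ is a union of traversed boundary sub-arcs, the bounded complementary domain satisfies $B(C)\subseteq S$.

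Once $C$ is available the contradiction is immediate: $f$ has a closed and connected graph and $C$ is a skyhook, so Theorem~\ref{force_dicont} gives $D_f\cap B(C)\neq\emptyset$, whereas $B(C)\subseteq S\subseteq\R^2\setminus D_f=C_f$. This contradiction shows that $E$ has no isolated point, and therefore $f|D_f$ has infinitely many points of discontinuity. The main obstacle is precisely the splicing step — converting the semi-skyhook at $x$ into a \emph{genuine} $f$-skyhook confined to the hole $S$: the one-sidedness of the blow-up at $x$ is automatic, but one must simultaneously bring the closing arc back to $\partial S$ at points of $S^f_0$ and negotiate the other points of $E$ that may lie on the part of $\partial S$ one is forced to cross; the delicate case is when the boundary arc of $\partial S$ adjacent to $x$ on the continuous side is itself labelled $S^f_\infty$, which forces the continuous side to be prolonged along $\partial S$ past a point of $E$ before $\tau$ can re-enter $S$ at an $S^f_0$-arc, and this is where the most care is required.
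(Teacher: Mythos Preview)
Your overall strategy---produce a skyhook $C$ with $B(C)$ contained in a single complementary hole $S$ of $D_f$, then invoke Theorem~\ref{force_dicont}---is reasonable, but the proof as written has a real gap at exactly the point you yourself flag as ``the main obstacle''.

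First, a framing issue. You reduce to showing that $E$ (the discontinuity set of $f|D_f$) has no isolated point, and then assume only that a single $x\in E$ is isolated. Yet in the splicing paragraph you write ``the $S^f_0$/$S^f_\infty$ labelling of $\partial S$ can change only at the finitely many points of $E$ lying on $\partial S$''. Nothing in your hypotheses forces $E\cap\partial S$ to be finite; isolation of $x$ alone does not give this. If you instead assume $E$ is finite from the start (which is equivalent for the conclusion), this objection disappears---but then your argument is no longer the clean ``no isolated point'' reduction.

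Second, and more seriously, the splicing itself is not carried out. Your fact~(a), $\lim_{S\ni u\to x}f(u)=+\infty$, combined with Theorem~\ref{neither_or} applied to the continuous arc $(x,z_2]$, forces $(x,z_2]\subset S^f_\infty$: were any point of $(x,z_2]$ in $S^f_0$, Corollary~\ref{easy_con} would produce $u\in S$ near $x$ with $f(u)$ close to $f(x)$, contradicting~(a). So the ``delicate case'' you describe at the end is not an exception---it is the generic situation. You therefore \emph{must} prolong the continuous side along $\partial S$ past at least one further point of $E$ before reaching an $S^f_0$-arc, and at such a point $f|\partial S$ is discontinuous. Your suggestion to ``detour through $S$'' there is not workable as stated: any arc entering $S$ and returning to $\partial S$ must land at a point of $S^f_0$ to keep $f$ continuous on $C$, but the whole difficulty is that you have not yet reached an $S^f_0$-arc. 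The construction is circular.

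The paper avoids this local obstruction by a global argument: it collects, for \emph{each} discontinuity point $x_i$ and each large adjacent hole $S$, a finite list of skyhooks $C^{i,S}_j$ covering every possible semi-skyhook direction at $x_i$, takes their union $\mathcal J$, and uses Corollary~\ref{n_small_skyhook} to extract a skyhook $C\subset\mathcal J$ whose interior $B(C)$ is a full component of $\R^2\setminus\mathcal J$. The payoff is that $B(C)$ now meets $D_f$ (Theorem~\ref{force_dicont}), and the set $E'=(\hat B(C)\cap D_f)\cup C$ is a continuum to which Lemma~\ref{force_skyhook} applies, producing a skyhook $\partial R$ whose interior $R$ is a genuine hole of $D_f$. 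The finiteness of $E$ is used not to label a single boundary circle, but to guarantee that the collection $\mathcal J$ is finite so that Corollary~\ref{n_small_skyhook} applies, and to rule out stray discontinuities on $\partial R$ via the bookkeeping with the families $R(x_i,r_i)$. That machinery is precisely what your splicing paragraph is trying to replace by hand, and it is not something one can wave through.
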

\begin{proof}
Assume to the contrary that there are only finitely many discontinuity points of the function $f|D_f$. Let $\{x_1, \dots, x_n\}$ be the set of these points.\\
Let $R(x,r)$ be a family of all connected components $S$ of $\R^2\setminus D_f$ such that $x\in\partial S$, there exists a semi-skyhook $[z,x]\subset \partial S$ with the infinity point $x$ and $S\not\subset B(x,r)$.
By Theorem \ref{zero_components}, $R(x,r)$ is a finite family for any $x\in \R^2$ and $r > 0$. By Lemma \ref{comp_existence}, we can choose $r_1, \dots, r_n > 0$ such that $R(x_i, r_i)\not=\emptyset$ and $f|D_f$ is continuous on $D_f \cap (\hat{B}(x_i,r_i)\setminus\{x_i\})$ for $i=1,\dots, n$.
Recall that, by Theorem \ref{circle_factory}, boundaries of all connected components of $\R^2\setminus D_f$ are homeomorphic with the unit circle.
Therefore, for each index $i=1,\dots, n$ and $S\in R(x_i, r_i)$, we can choose a semi-skyhooks $\tau^{i,S}_1, \tau^{i,S}_2$ with the infinity point $x_i$, such that $\tau^{i,S}_1, \tau^{i,S}_2\subset \partial S$ and each semi-skyhook $\tau\subset \partial S$ with the infinity point $x_i$ has a common sub-semi-skyhook with  $\tau^{i,S}_1$ or with $\tau^{i,S}_2$ (it might be $\tau^{i,S}_1 = \tau^{i,S}_2$). Moreover, by Lemma \ref{simple_skyhook_factory}, we will require that there exists a skyhook $C^{i,S}_j$ with the infinity point $x_i$, such that $\tau^{i,S}_j\subset C^{i,S}_j$ for $j=1,2$.\\
Let 
$$\mathcal{J} = \bigcup\{C^{i,S}_j:S\in R(x_i, r_i); i \in \{1, \dots, n\}; j\in\{1,2\}\}.$$
By Corollary \ref{n_small_skyhook}, there exists a skyhook $C$ such that $C\subset \mathcal{J}$ and $B(C)$ is a connected component of $\R^2\setminus \mathcal{J}$. By Theorem \ref{force_dicont}, $B(C)\cap D_f\not=\emptyset$. It's easy to notice, that there exists such an index $m$, that $x_m$ is the infinity point of $C$ and there exists $l\in\{1,2\}$ and $Z\in R(x_m, r_m)$ such that there exists a sub-semi-skyhook $s$ of $\tau^{m,Z}_l$ such that $s\subset C$. Thus $|D_f\cap C| \geq 2$. Let $E=(\hat{B}(C)\cap D_f)\cup C$. By Lemma \ref{important_lac}, $E$ is a locally arcwise-connected continuum not separated by any point. Since $C$ is a skyhook with respect to $f$, it's easy to notice that $f|E$ has a closed, connected and locally connected graph. By the argument similar to that from the proof of Lemma \ref{comp_existence}, there exists a function $\hat{g}:E\to [0,+\infty)$ such that $\hat{g} = f$ on $\hat{K}(x_m, r_m)\cap E$ and $\hat{g}$ is continuous on $E\setminus\{x_m\}$. Thus by Lemma \ref{force_skyhook}, there exists a bounded connected component $R$ of $\R^2\setminus E$, such that $x_m\in \partial R$ and $\partial R$ is a skyhook with respect to $\hat{g}$ with the infinity point $x_m$. Since $R$ is bounded, $R\subset B(C)$.\\
We will show that $\partial R$ is a skyhook with respect to $f$. Suppose that it isn't so. Since $\partial R$ is a skyhook with respect to $\hat{g}$, there must exist a discontinuity point $x'$ of $f|\partial R$ such that $x'\not=x_m$. First we will show that (**) if $f|\partial R$ is discontinuous at $x\in \partial R$, then $x\in C$ and $f|D_f$ is discontinuous at $x$. If $x = x_m$ then (**) is trivial. If $x\in B(C)\cap D_f$, then obviously $x$ is a discontinuity point of $f|D_f$, but this leads to the contradiction, for all discontinuity points of $f|D_f$ are contained in $\mathcal{J}$ and $B(C)\cap \mathcal{J} = \emptyset$. Thus $x\in C$. Since $f|C$ is continuous at $x$ and $\partial R\subset C \cup (D_f\cap B(C))$, $f|D_f$ must be discontinuous at $x$. (**) is shown. Now by (**), there exists such an index $k\not=m$, that $x'=x_k$. By (**) $x_k$ is an isolated point of discontinuity of $f|\partial R$. Thus by Theorem \ref{force_infinity_point}, there exists a semi-skyhook $[z, x_k]$ with respect to $f$, such that $[z,x_k]\subset \partial R$.  Since $f|C$ is continuous at $x_k$, there exists sub-semi-skyhook $[u,x_k]$ of $[z,x_k]$ such that $[u,x_k)\subset D_f\cap B(C)$. Since $R$ is a bounded connected component of $\R^2\setminus E$, $R\subset \R^2\setminus D_f$. Take a connected component $R_0$ of $\R^2\setminus D_f$, such that $R\subset R_0$. Since $[u,x_k]\subset \partial R \cap D_f$, $[u,x_k]\subset \partial R_0$. Since $x_m\in \partial R\cap D_f$, $x_m\in \partial R_0$. Therefore, $R_0\not\subset K(x_k, r_k)$. Thus $R_0\in R(x_k, r_k)$. Hence, there exists $a\in \{1,2\}$ such that $\tau^{k, R_0}_a$ has a common sub-semi-skyhook with $[u,x_k]$. So, $C^{k, R_0}_a \cap [u,x_k)\not=\emptyset$ and thus, $\mathcal{J} \cap [u, x_k)\not=\emptyset$. But this means that $B(C)\cap \mathcal{J}\not=\emptyset$, which  contradicts that $B(C)$ is a connected component of $\R^2\setminus\mathcal{J}$. We showed that $\partial R$ is a skyhook with respect to $f$.\\
But as $R\subset \R^2\setminus D_f$, this would violate Therorem \ref{force_dicont}.
\end{proof}

\begin{theorem}\label{exl_div}
If $f:\R^2\to[0,+\infty)$ has a closed and connected graph, $C\subset\R^2$ is homeomorphic with the unit circle, an arc $[x,y]\subset C$, there exists an open set $U$ such that $U\cap D_f = (x,y)$, $A = U\cap B(C)$ and $B=U\cap(\R^2\setminus \hat{B}(C))$, then either $(x,y)\subset A^f_\infty$ or $(x,y)\subset B^f_\infty$.  
\end{theorem}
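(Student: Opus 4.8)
The plan is to mimic the structure of Theorem \ref{neither_or}, working with the connected set $(x,y)$ and the two "sides" $A$ and $B$ of the circle $C$. First I would record the topological picture: by Example \ref{trivial_example} (and the corollaries following Schönflies' theorem), every point of $(x,y)\subset U$ is locally connectedly accessible both from $A=U\cap B(C)$ and from $B=U\cap(\R^2\setminus\hat B(C))$, since $C=\partial B(C)$ and $U$ is open. I would also observe that $f|(x,y)$ need not be continuous, so I cannot apply Theorem \ref{neither_or} verbatim; instead I will apply Corollary \ref{easy_con}/Theorem \ref{exploding} directly on each side. Note $(x,y)=U\cap D_f$ means $U\setminus D_f\subset C_f$, so in particular $A\subset C_f$ and $B\subset C_f$.

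The core dichotomy I would prove is: for each $p\in(x,y)$, either $p\in A^f_\infty$ or $p\in B^f_\infty$. Suppose not, so $p\in A^f_0\cap B^f_0$. By Corollary \ref{easy_con}, $f|\Clo(A)$ is continuous at $p$ (on the relatively open set $A^f_0$) and likewise $f|\Clo(B)$ is continuous at $p$; since $p\in\Clo(A)\cap\Clo(B)$ and both closures contain a neighbourhood-trace of $p$ inside $\Clo(A)\cup\Clo(B)$, and since $C\subset\Clo(A)\cap\Clo(B)$ locally near $p$, one glues these to get $f$ continuous at $p$ — contradicting $p\in D_f$. (The gluing step needs a little care: a small ball $V$ around $p$ with $V\subset U$ splits as $(V\cap B(C))\sqcup(V\cap C)\sqcup(V\cap(\R^2\setminus\hat B(C)))$, i.e.\ $(V\cap A)\cup(V\cap B)\cup(V\cap(x,y))$, and $V\cap(x,y)\subset\Clo(A)\cap\Clo(B)$; continuity of $f|\Clo(A)$ and $f|\Clo(B)$ at $p$ with matching value $f(p)$ forces a single limit.) So $(x,y)=(A^f_\infty\cap(x,y))\cup(B^f_\infty\cap(x,y))$.

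Next I would upgrade "$p\in A^f_\infty$" to "$A^f_\infty\cap(x,y)$ is relatively open in $(x,y)$", and symmetrically for $B$, exactly as in the proof of Theorem \ref{neither_or}. Fix a relative boundary point $p$ of $A^f_\infty\cap(x,y)$; then every neighbourhood of $p$ meets $B^f_\infty\cap(x,y)$, hence (since the two sets cover $(x,y)$ and $A^f_\infty$ is closed) one checks $p\in B^f_\infty$ is impossible while also $p\notin A^f_\infty$ is impossible — I would instead run the contradiction through Theorem \ref{exploding}: since $p$ is locally connectedly accessible from $A$ and $p\in A^f_\infty$ would give $\lim_{A\ni u\to p}f(u)=+\infty$, but approximating $p$ by points $v\in B^f_\infty\cap(x,y)$ near which $f|\Clo(B)$-bounded neighbours exist contradicts $p\in A^f_\infty$; symmetrically $p\in B^f_\infty$ fails. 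Hence both $A^f_\infty\cap(x,y)$ and $B^f_\infty\cap(x,y)$ are relatively clopen in the connected set $(x,y)$, and they cover it, so one of them is all of $(x,y)$ — i.e.\ $(x,y)\subset A^f_\infty$ or $(x,y)\subset B^f_\infty$.

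The main obstacle I anticipate is the gluing argument in the dichotomy (second paragraph): making precise that continuity of $f$ restricted to each of two closed half-neighbourhoods whose union is a full neighbourhood of $p$, with agreeing values at $p$, yields continuity of $f$ at $p$. This is where $f$ having a closed graph and the exact shape of $A$, $B$, $(x,y)$ relative to $U$ and $C$ all get used, and it is the only place the hypothesis "$U\cap D_f=(x,y)$" (rather than merely $(x,y)\subset D_f$) is essential — it guarantees $A\cup B\subset C_f$ and that the only discontinuities inside $U$ lie on the arc. Everything else is a faithful adaptation of the clopen-partition technique already deployed in Theorem \ref{neither_or}.
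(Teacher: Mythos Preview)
Your dichotomy in the second paragraph (every $p\in(x,y)$ lies in $A^f_\infty\cup B^f_\infty$, via the gluing of $f|\Clo(A)$ and $f|\Clo(B)$) is correct and matches the paper's opening observation that $A^f_0\cap B^f_0\cap(x,y)=\emptyset$. The gap is in your third paragraph, where you claim that $A^f_\infty\cap(x,y)$ is open in $(x,y)$. Take a relative boundary point $p$: since $A^f_\infty$ is closed you have $p\in A^f_\infty$, and there exist $v_n\in A^f_0\cap(x,y)$ with $v_n\to p$. By your dichotomy these $v_n$ lie in $B^f_\infty$, \emph{not} in $B^f_0$, so there are no ``$f|\Clo(B)$-bounded neighbours'' of the $v_n$ to appeal to; the contradiction you sketch does not materialise. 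All one can conclude is that $p\in B^f_\infty$ as well (as a limit of points of the closed set $B^f_\infty$), i.e.\ $p\in A^f_\infty\cap B^f_\infty$, and nothing you wrote rules this out. A telling symptom: your argument never invokes the hypothesis that the graph of $f$ is connected, yet that hypothesis is essential, so any argument omitting it must be incomplete.

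The paper handles exactly this obstruction, but by moving the clopen-partition trick from the arc $(x,y)$ to the graph of $f$. Under the contradiction assumption it picks $a\in A^f_0\cap(x,y)$, $b\in B^f_0\cap(x,y)$, produces a point $\alpha\in(a,b)$ lying in $A^f_\infty\cap B^f_\infty$ (your boundary point $p$ would serve), and sets $F=[a,b]\cap A^f_\infty\cap B^f_\infty\neq\emptyset$. For every $v\in F$ and every sequence $v_n\to v$ with $v_n\notin F$, one gets $f(v_n)\to+\infty$: from $A$ or from $B$ by Theorem~\ref{exploding}, and from $(a,b)\setminus F\subset A^f_0\cup B^f_0$ by Corollary~\ref{col_exp}. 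Hence $f|F$ is a nonempty proper clopen piece of the graph, contradicting its connectedness. In short, the missing idea is that the relevant connected object is the graph, not the arc; once you feed your boundary point into the set $F$ and run the paper's argument, your outline can be completed.
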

\begin{proof}
Since the graph of $f$ is closed and $(x,y)\subset D_f$,
$A^f_0 \cap B^f_0 \cap (x,y) = \emptyset$.
Assume to the contrary that $(x,y)\not\subset A^f_\infty$ and $(x,y)\not\subset B^f_\infty$. Hence $(x,y)\cap A^f_0 \not=\emptyset$ and $(x,y)\cap B^f_0 \not=\emptyset$. Thus, it's clear that $(x,y)\cap A^f_0 = (x,y)\cap B^f_\infty$ and $(x,y)\cap B^f_0 = (x,y)\cap A^f_\infty$.
Let $a\in (x,y) \cap A^f_0$ and $b\in (x,y)\cap B^f_0$. By symmetry, assume that $a < b$. Let $\alpha = inf\{t > a : t\in A^f_\infty\}$. Since $A^f_0$ is open in the relative topology of $Clo(A)$ and $B^f_0$ is open in the relative topology of $Clo(B)$, we have $a < \alpha < b$. Note that $(a,\alpha)\subset A^f_0$ and $\alpha \in A^f_\infty$. Since $(a, \alpha)\subset B^f_\infty$, $\alpha \in A^\infty \cap B^\infty$. Let $F = [a,b] \cap A^f_\infty \cap B^f_\infty$. Obviously $F\not=\emptyset$. We will show, that for any $v\in F$ and any sequence $\R^2\setminus F\ni v_n \to v$, we have $f(v_n)\to+\infty$. Since $v\in A^\infty \cap B^\infty$, it's enough to show, that for $(a,b)\setminus F = (a,b)\cap (A^f_0 \cup B^f_0)\ni v_n\to v$, we have $f(v_n)\to+\infty$. But this is a conclusion from Corollary \ref{col_exp}. Thus $f|F$ is closed-open in the relative topology of $f$. This contradicts to the connectedness of the graph.
\end{proof}

\begin{theorem}\label{only_two}
If $f:\R^2\to[0,+\infty)$ has a closed and connected graph, $U$ is an open subset of $\R^2$ and there is an arc $(x,y) = U\cap D_f$, then $f|D_f$ has no more than $2$ point of discontinuity on the arc $(x,y)$.
\end{theorem}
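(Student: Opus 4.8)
The plan is to argue by contradiction and to reduce the statement to the absence of a ``middle'' discontinuity. First note that a point $p\in U$ with $p\notin(x,y)$ is a continuity point of $f$, and $U$ is open, so for $p\in(x,y)$ the restriction $f|D_f$ is continuous at $p$ exactly when the real function $\varphi:=f\circ\gamma$ is (here $\gamma\colon(0,1)\to(x,y)$ is a homeomorphism giving the linear order on the arc). Thus it suffices to rule out the existence of $p_1<p_2<p_3$ in $(x,y)$, all of them discontinuity points of $\varphi$; equivalently, no point of the discontinuity set of $\varphi$ has further discontinuity points of $\varphi$ on both sides of it.

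Next I would set the stage with Theorem \ref{exl_div}. Fix a simple closed curve $C\supseteq[x,y]$, obtained by adjoining to $[x,y]$ a second arc from $y$ to $x$ that hugs $[x,y]$ on a chosen side, so that $A:=U\cap B(C)$ is a thin collar along $(x,y)$ inside $U$ and $B:=U\cap(\R^2\setminus\hat B(C))$ is the opposite collar. By Theorem \ref{exl_div}, either $(x,y)\subseteq A^f_\infty$ or $(x,y)\subseteq B^f_\infty$, and by choosing the side of the auxiliary arc we may assume the former. Here $A,B\subseteq C_f$, $(x,y)\subseteq \Clo(A)\cap\Clo(B)$, and by Theorem \ref{exploding} $\lim_{A\ni u\to p}f(u)=+\infty$ for every $p\in(x,y)$; moreover one \emph{cannot} also have $(x,y)\subseteq B^f_\infty$, since then the argument from the proof of Theorem \ref{exl_div} would exhibit a nonempty proper relatively clopen subset of the graph of $f$, contradicting connectedness. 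Hence $G:=(x,y)\cap B^f_0\neq\emptyset$, and by Corollary \ref{easy_con} $\varphi$ is continuous on $G$; consequently every discontinuity point of $\varphi$ lies in $(x,y)\cap B^f_\infty$. Also, because $f$ maps into $[0,+\infty)$ and has a closed graph, at each discontinuity point of $\varphi$ we have $\limsup\varphi=+\infty$ along the arc (a bounded restriction to a compact subarc would be continuous by Theorem \ref{fcon}); so each $p_i$ is a genuine ``blow‑up'' point of $\varphi$.

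The heart of the proof is then to convert $p_2$ into a skyhook whose bounded complementary region is free of discontinuity points, contradicting Theorem \ref{force_dicont}. Since $p_2\in A^f_\infty$ and $f|A$ is continuous, any arc running through $A$ and ending at $p_2$ is a semi-skyhook with infinity point $p_2$; feeding it into Lemma \ref{simple_skyhook_factory} together with the local arc-connectedness of the graph (Mazurkiewicz--Moore, as in the proof of Lemma \ref{comp_existence}) produces a skyhook through $p_2$ whose continuous ``return arc'' must approach $p_2$ from a direction along which $f$ does not blow up — that is, along $(x,y)$, through a subarc on which $\varphi$ is continuous. I would carry out the same construction over $p_1$ and $p_3$, obtaining skyhooks $C_1,C_2,C_3$ with infinity points $p_1,p_2,p_3$; the point of having three discontinuities is that the ``bad'' behaviour of $\varphi$ is thereby confined to the compact subarc $[p_1,p_3]$. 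Applying Theorem \ref{small_skyhook} and Corollary \ref{n_small_skyhook} to $C_1\cup C_2\cup C_3$ yields a minimal skyhook $C$ with $B(C)$ a connected component of $\R^2\setminus(C_1\cup C_2\cup C_3)$ and $C\subseteq\hat B(C_1)$; its infinity point is one of $p_1,p_2,p_3$, say $p_m\in(x,y)$. One then checks, using $D_f\cap U=(x,y)$ and the fact that $(x,y)$ is part of the large curve $C$ while $B(C)$ lies on one side of it, that $B(C)$ sits inside $U$ and meets $(x,y)$ — hence $D_f$ — in the empty set. This contradicts Theorem \ref{force_dicont}.

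The main obstacle, and the technical core, is exactly this last localisation: because $(x,y)\subseteq A^f_\infty$ forces $f$ to explode on approaching \emph{every} point of the arc from the $A$-side, a semi-skyhook cannot be closed up through $A$ without creating a second infinity point, so the skyhooks built over $p_1,p_2,p_3$ necessarily have return arcs travelling along $(x,y)$; showing that the minimal skyhook extracted from their union really does enclose a region disjoint from $(x,y)$ (and contained in $U$) is where the hypothesis of \emph{three} discontinuity points is genuinely used, and is the step requiring the most care.
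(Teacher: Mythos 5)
Your proposal diverges from the paper's argument and has two genuine gaps. First, the skyhook machinery you invoke is not available under the hypotheses of Theorem \ref{only_two}. To produce a skyhook through $p_2$ you need Lemma \ref{simple_skyhook_factory}, which requires an \emph{arcwise-connected} graph, and you justify this via Mazurkiewicz--Moore ``as in the proof of Lemma \ref{comp_existence}''; but there the local connectedness of the graph is an explicit hypothesis, whereas here the graph is only assumed closed and connected. In the paper's architecture, local connectedness of the graph is established only later (Lemma \ref{almost_final}) \emph{using} Theorem \ref{only_two}, so assuming it here is circular, and without it not even one skyhook at $p_2$ is known to exist (Theorem \ref{force_infinity_point} does not apply either, since $D_f$ is not a single point). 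Second, the decisive step of your contradiction — that the minimal skyhook $C$ extracted via Corollary \ref{n_small_skyhook} from $C_1\cup C_2\cup C_3$ satisfies $B(C)\subset U$ and $B(C)\cap(x,y)=\emptyset$, hence $B(C)\cap D_f=\emptyset$ — is exactly the point you defer (``the step requiring the most care'') and it is not a routine check: the return arcs of your skyhooks travel along $(x,y)$ and need not stay inside $U$ (they are built from the global graph, not from data confined to $U$), Corollary \ref{n_small_skyhook} gives no control over which $p_i$ is the infinity point of $C$ nor over whether $B(C)$ avoids the arc lying between consecutive return arcs, and $D_f$ may have further, completely unknown, portions outside $U$ that $B(C)$ could meet. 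As it stands the contradiction with Theorem \ref{force_dicont} is not established.

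For comparison, the paper's proof avoids skyhooks entirely and is much shorter: after the same normalisation via Theorem \ref{exl_div} (say $(x,y)\subset B^f_\infty$), it shows using Corollary \ref{col_exp} and connectedness of the graph that $A^f_0\cap(x,y)$ is a single open subarc $(x',y')$, on which $f$ is continuous by Corollary \ref{easy_con} and at whose endpoints $f$ blows up; the two remaining closed pieces $[x,x']$ and $[y',y]$ lie in $A^f_\infty\cap B^f_\infty$, so their graphs must be connected (otherwise the whole graph disconnects), and then the one-dimensional theorem of Burgess \cite{Bur} forces $f$ to be continuous on them. Hence the only possible discontinuities of $f|(x,y)$ are $x'$ and $y'$. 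If you want to salvage your approach, you would have to either add local connectedness of the graph as a hypothesis (which defeats the purpose of the theorem within the paper) or replace the skyhook construction by an argument using only closedness and connectedness, at which point you are essentially led back to the paper's interval-plus-Burgess argument.
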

\begin{proof}
Take any $C$ which is homeomorphic with the unit circle and $[x,y]\subset C$. Let $A = U\cap B(C)$ and $B=U\cap(\R^2\setminus \hat{B}(C))$. By Theoren \ref{exl_div}, $(x,y)\subset A^f_\infty$ or $(x,y)\subset B^f_\infty$. By symmetry, assume that $(x,y)\subset B^f_\infty$.\\
We will show that there exist points $x'$ and $y'$ such that $(x',y')=A^f_0\cap (x,y)$ (here for convenience we assume that $(x',x')=\emptyset$).\\ 
It's enough to show that for any $a,b\in (x,y)\cap A^f_0$ ($a<b$), we have $[a,b]\subset A^f_0$. Let $F = [a,b] \cap A^f_\infty$. We will show that $F=\emptyset$. Take any $v\in F$ and $v_n\in (a,b)\setminus F = (a,b)\cap A^f_0$ such that $v_n\to v$. By Corollary \ref{col_exp}, $f(v_n)\to \infty$. Therefore, since $F\subset A^f_\infty\cup B^f_\infty$, $f|F$ is closed and open set in the relative topology of $f$. But since graph of $f$ is connected, $F=\emptyset$. Thus $[a,b]\in A^f_0$.\\
Then we have $(x',y')=A^f_0\cap (x,y)$. Assume that $x\leq x'<y'\leq y$. By Corollary \ref{easy_con}, $f|(x',y')$ is continuous. By Corollary \ref{col_exp}, $\lim_{(x',y')\ni t\to x'}f(t)=+\infty$ and 
$\lim_{(x',y')\ni t\to y'}f(t)=+\infty$. Since $[x,x']\subset A^f_\infty\cup B^f_\infty$ and $[y,y']\subset A^f_\infty\cup B^f_\infty$, the graphs of $f|[x,x']$ and $f|[y,y']$ must be connected. So, in virtue of \cite{Bur}, they must be continuous. Thus $f|(x,y)$ has no more than $2$ points of discontinuity. In the above we assumed that $x\leq x'<y' \leq y$. If $x'=y'$, by our convention, it means that $(x,y)\subset A^f_\infty$. Then $(x,y)\subset A^f_\infty\cup B^f_\infty$ and in virtue of \cite{Bur}, two or more points of discontinuity of $f|(x,y)$ would make the graph of $f$ disconnected.
\end{proof}

After \cite[61.IV]{Ku2} we will define a network on the plane:

\begin{definition}\label{network}
Let $E\subset \R^2$. $E$ is a network, iff
\begin{enumerate}
\item $E = \bigcup\limits_{i=1}^n [a_i, b_i]_i$,
\item $[a_1, b_1]_1\cup [a_2, b_2]_2$ is homeomorphic with the unit circle,
\item $[a_k,b_k]_k \cap \bigcup\limits_{i=1}^{k-1} [a_i, b_i]_i = \{a_k, b_k\}$ for $k=2, \dots, n$.
\end{enumerate}
\end{definition}

\begin{corollary}
Each network is a locally connected continuum not separated by any point.
\end{corollary}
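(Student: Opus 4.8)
The plan is to argue by induction on the number $n$ of arcs making up the network, invoking Fact \ref{clo_lc} for local connectedness and a brief case analysis for the ``no cut point'' property.

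\emph{Base case $n=2$.} By condition (2) of Definition \ref{network}, $E=[a_1,b_1]_1\cup[a_2,b_2]_2$ is homeomorphic with $\Sph_1$. A circle is a compact connected metrisable space, hence a continuum; it is locally connected; and deleting any one of its points leaves a set homeomorphic with an open interval, which is connected. So $E$ is a locally connected continuum not separated by any point.

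\emph{Inductive step.} Let $n\geq 3$ and assume the claim for every network built from $n-1$ arcs. Let $E=\bigcup_{i=1}^{n}[a_i,b_i]_i$ be a network, put $E'=\bigcup_{i=1}^{n-1}[a_i,b_i]_i$ and $L=[a_n,b_n]_n$, so $E=E'\cup L$. Conditions (1)--(3) of Definition \ref{network} are inherited by $E'$ (note $n-1\geq 2$), so $E'$ is a network and the induction hypothesis applies: $E'$ is a locally connected continuum not separated by any point; in particular $E'$ is closed in $\R^2$. The set $L$ is an arc, hence compact (so closed in $\R^2$), connected and locally connected, with distinct endpoints $a_n\neq b_n$; and by condition (3), $E'\cap L=\{a_n,b_n\}$. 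Now $E$ is compact as a finite union of compacta, and connected because $E'$ and $L$ are connected and meet at $a_n$; thus $E$ is a continuum. Since $E'$ and $L$ are both closed and locally connected, $E=E'\cup L$ is locally connected by Fact \ref{clo_lc}.

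It remains to check that $E\setminus\{p\}$ is connected for every $p\in E$; fix the linear order on $L$ with $a_n<b_n$. If $p\in L$ lies strictly between $a_n$ and $b_n$, then $p\notin E'$ and $E\setminus\{p\}=\bigl(E'\cup[a_n,p)\bigr)\cup\bigl(E'\cup(p,b_n]\bigr)$; each bracketed set is the union of two connected sets sharing a point of $E'$ (namely $a_n$, resp.\ $b_n$), hence connected, and the two bracketed sets share the connected set $E'$, so $E\setminus\{p\}$ is connected. If instead $p\in E'$, then $E\setminus\{p\}=(E'\setminus\{p\})\cup(L\setminus\{p\})$; the first set is connected by the induction hypothesis, the second is connected (it equals $L$ when $p\notin\{a_n,b_n\}$, and is a half-open sub-arc of $L$ when $p$ is $a_n$ or $b_n$), and the two share whichever of $a_n,b_n$ differs from $p$, a point lying in $E'\setminus\{p\}$ by condition (3). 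Hence $E\setminus\{p\}$ is connected, and the induction is complete.

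\emph{Where the work is.} There is no serious obstacle: the statement is essentially a finite induction. The only point needing care is the last step — after removing a candidate cut point $p$, the surviving pieces of the newly attached arc $L$ must still be joined to the (still connected) old part $E'$ — and this is exactly where condition (3), which forces \emph{both} endpoints of $L$ into $E'$, does its job.
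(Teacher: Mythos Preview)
Your induction is correct and is the natural way to justify this corollary; the paper itself states the result without proof, treating it as immediate from Definition~\ref{network}. Your use of Fact~\ref{clo_lc} for local connectedness and the two-case analysis for the absence of cut points (exploiting that condition~(3) forces both endpoints of the new arc into the previously built network) are exactly the routine verifications one would expect, so there is nothing to contrast.
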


\begin{corollary}\label{circle_again}
If $E\subset\R^2$ is a network, then $\R^2\setminus E$ has finitely many connected components and for each its connected component $R$, $\partial R$ is homeomorphic with the unit circle.
\end{corollary}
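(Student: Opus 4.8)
I would prove the two halves of the statement separately: the part about boundaries is essentially immediate, and the finiteness of the number of components I would obtain by induction on the number $n$ of arcs in Definition \ref{network}. For the second half: a network is, by the Corollary just stated, a locally connected continuum not separated by any point, so Theorem \ref{circle_factory} applies to it verbatim and yields $\partial R\cong\Sph_1$ for every connected component $R$ of $\R^2\setminus E$. Writing $E_k=\bigcup_{i=1}^{k}[a_i,b_i]_i$, note that each $E_k$ with $k\geq2$ is again a network, so this remark applies to $E_k$ too; in particular the boundary of every component of $\R^2\setminus E_k$ is a circle — a fact I will use freely below without re-proving it.

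The base case $n=2$ is Corollary \ref{circle_disconnect}, since $E_2\cong\Sph_1$ cuts $\R^2$ into exactly two components. For the inductive step put $\alpha=[a_n,b_n]_n$. By condition (3) of Definition \ref{network} we have $\alpha\cap E_{n-1}=\{a_n,b_n\}$ with $a_n\neq b_n$ (an arc has two distinct endpoints), so the connected set $(a_n,b_n)_n$ is contained in a single connected component $R$ of $\R^2\setminus E_{n-1}$, while $a_n,b_n\in\overline R\setminus R=\partial R$ (note $\partial R\subset E_{n-1}$). Every component of $\R^2\setminus E_{n-1}$ other than $R$ is disjoint from $\alpha$ and hence remains a component of $\R^2\setminus E_n=(\R^2\setminus E_{n-1})\setminus\alpha$; so everything reduces to showing that $R\setminus\alpha=R\setminus(a_n,b_n)_n$ has exactly two components, since then the component counts satisfy $c_n=c_{n-1}+1$, giving $c_n=n<\infty$.

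This last point is the heart of the matter. Let $C=\partial R$, a circle by the standing fact above. By Theorem \ref{schonflies} we may assume $C=\Sph_1$. Since $R$ is connected and disjoint from $C$, it lies in one of the two complementary domains $W$ of $C$; being relatively open and, because $\overline R=R\cup C$, also relatively closed and nonempty in the connected set $W$, it equals $W$, so $R$ is either the open unit disc or the open exterior $\{|z|>1\}$. Adjoining the point at infinity turns the exterior case into the disc case, so in the sphere $\R^2\cup\{\infty\}$ the closure $\overline R$ is a closed topological disc with boundary circle $C$, and $\alpha$ is a crosscut of it — an arc meeting $C$ in precisely its two distinct endpoints. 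It is classical that a crosscut divides an open disc into exactly two components: the endpoints of $\alpha$ split $C$ into arcs $\sigma_1,\sigma_2$, the curves $\alpha\cup\sigma_1$ and $\alpha\cup\sigma_2$ are Jordan curves, and their two bounded complementary domains are the two pieces of $R\setminus\alpha$. This closes the induction.

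The step I expect to be the main obstacle is precisely this third paragraph: recognising $R$ as a complementary disc of its (already known to be circular) boundary and then invoking the crosscut theorem. The clopen argument makes the recognition painless once $\partial R$ is known to be a circle, and the crosscut fact is standard; all the rest — that $E_k$ is a network, that non‑$R$ components survive, and the arithmetic of the count — is routine bookkeeping. (Alternatively one could simply count the faces of the embedded graph $E$ in the sphere $\R^2\cup\{\infty\}$ via Euler's formula, but the crosscut induction is more self‑contained and additionally pins down the exact number of components.)
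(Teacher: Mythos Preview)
Your argument is correct. The paper itself offers no proof of Corollary~\ref{circle_again}: it is stated as an immediate consequence of the preceding corollary (a network is a locally connected continuum not separated by any point) together with Theorem~\ref{circle_factory} for the boundary statement, and implicitly of the Kuratowski reference \cite[61.IV]{Ku2} for finiteness, since that is where the notion of network comes from and where its elementary properties are developed.

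Your route is therefore not so much \emph{different} from the paper's as it is \emph{more explicit}. You reproduce the paper's derivation of $\partial R\cong\Sph_1$ verbatim, and for finiteness you replace the appeal to Kuratowski by a self-contained induction: each new arc $[a_n,b_n]_n$ is a crosscut of the Jordan domain $R$ that contains its interior, hence (via Sch\"onflies and the classical crosscut/$\theta$-curve theorem) increments the component count by exactly one. The clopen argument identifying $R$ with one of the two complementary domains of its boundary circle is clean, and the passage to $\Sph^2$ to handle the unbounded case is the standard manoeuvre. The only point worth flagging is that the crosscut theorem, while indeed classical, is of the same depth as Jordan--Sch\"onflies; but since the paper already invokes Sch\"onflies freely (Theorem~\ref{schonflies}), this is entirely in keeping with its level of rigour. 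Your closing remark about Euler's formula is apt: that is essentially what Kuratowski's treatment amounts to, so the two approaches converge.
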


\begin{corollary}\label{wrapper_open}
If $E\subset \R^2$ is a network, then there exist some natural number $m$, arcs $[x_i, y_i]_i'$ and open sets $U_i$ for $i=1,\dots,m$ such that $E = \bigcup\limits_{i=1}^m [x_i, y_i]_i'$ and $U_i\cap E = (x_i, y_i)_i'$ for $i=1, \dots, m$.
\end{corollary}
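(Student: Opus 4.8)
The plan is to refine the given presentation $E=\bigcup_{i=1}^n[a_i,b_i]_i$ of the network by cutting each arc $[a_i,b_i]_i$ at the finitely many ``vertices'' of $E$ that lie on it, and then to verify that every piece of the refinement admits an open wrapper of the required type.

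First I would set $V=\{a_i,b_i:i=1,\dots,n\}$. This is a finite subset of $E$, and for indices $j\neq i$ one has $[a_j,b_j]_j\cap[a_i,b_i]_i\subset V$: if $j<i$ this is condition (3) of Definition~\ref{network} applied to $i$, and if $j>i$ it is condition (3) applied to $j$. For each $i$, the arc $[a_i,b_i]_i$ is homeomorphic with $[0,1]$ and meets $V$ in a finite set containing its endpoints; writing these points in the linear order fixed on the arc as $a_i=t^i_0<t^i_1<\dots<t^i_{s_i}=b_i$, I declare the closed sub-arcs $[t^i_{l-1},t^i_l]$, for all $i$ and all $l$, to be the arcs $[x_k,y_k]_k'$, $k=1,\dots,m$. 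Each such piece is a closed sub-interval of an arc, hence itself an arc; its interior is disjoint from $V$ by construction; and $\bigcup_k[x_k,y_k]_k'=\bigcup_i[a_i,b_i]_i=E$.

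It remains to produce the open sets $U_k$. Fix $k$ and say $[x_k,y_k]_k'\subset[a_i,b_i]_i$. For a point $z\in(x_k,y_k)_k'$ we have $z\notin V$, so $z\notin[a_j,b_j]_j$ for every $j\neq i$, and $z$ lies in the subset $(x_k,y_k)_k'$ which is open in the relative topology of the compact arc $[a_i,b_i]_i$. Hence $[a_i,b_i]_i\setminus(x_k,y_k)_k'$ is compact and avoids $z$, so some ball about $z$ meets $[a_i,b_i]_i$ only within $(x_k,y_k)_k'$; shrinking its radius, using that the finitely many compact arcs $[a_j,b_j]_j$ with $j\neq i$ also avoid $z$, I obtain $r_z>0$ with $B(z,r_z)\cap E\subset(x_k,y_k)_k'$. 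Then $U_k:=\bigcup\{B(z,r_z):z\in(x_k,y_k)_k'\}$ is open, contains $(x_k,y_k)_k'$, and satisfies $U_k\cap E\subset(x_k,y_k)_k'$ by the choice of radii; therefore $U_k\cap E=(x_k,y_k)_k'$, as required.

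I expect no serious obstacle here; the one point requiring care is the bookkeeping that each refinement piece is a genuine (non-degenerate) arc, and the observation that the separation property $[a_j,b_j]_j\cap[a_i,b_i]_i\subset V$ is exactly what makes an interior point of a piece lie on no other arc. Note that the $U_k$ are not claimed to be pairwise disjoint, which is fortunate since in general they are not.
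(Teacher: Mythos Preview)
Your argument is correct and is exactly the natural proof: subdivide each defining arc at the finite vertex set $V=\{a_i,b_i\}$, use condition~(3) of Definition~\ref{network} (symmetrically for $j<i$ and $j>i$) to see that interior points of a piece lie on no other arc, and build each $U_k$ from small balls avoiding the remaining compact arcs. The paper states Corollary~\ref{wrapper_open} without proof, so there is nothing to compare against beyond noting that your write-up supplies precisely the routine details the author left implicit.
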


\begin{lemma}\label{almost_final}
If $f:\R^2\to[0,+\infty)$ has a closed and connected graph and $D_f$ is a network, then the graph of $f$ is locally connected and $f|D_f$ has at most finitely many points of discontinuity.
\end{lemma}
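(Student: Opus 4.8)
The plan is to prove the two assertions separately, the second building on the first. \emph{Finitely many discontinuities of $f|D_f$.} By Corollary~\ref{wrapper_open} we may write $D_f=\bigcup_{i=1}^{m}[x_i,y_i]_i'$ and pick open sets $U_i$ with $U_i\cap D_f=(x_i,y_i)_i'$. For each $i$, Theorem~\ref{only_two} (applied to $U_i$) shows that $f|D_f$ has at most two points of discontinuity on the open arc $(x_i,y_i)_i'$. Since $D_f$ is the union of these $m$ open arcs together with the at most $2m$ endpoints $x_i,y_i$, the set $D(f|D_f)$ is finite.

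\emph{Local connectedness of the graph.} Write $\Gamma$ for the graph of $f$. By Corollary~\ref{circle_again}, $\R^2\setminus D_f$ has finitely many connected components $R_1,\dots,R_N$, each with $\partial R_k$ homeomorphic with $\Sph_1$. As $D_f$ has empty interior, each point of $D_f$ lies on some $\partial R_k$, so $\bigcup_k\overline{R_k}=\R^2$ and hence $\Gamma=\bigcup_{k=1}^{N}\Gamma_k$, where $\Gamma_k=\Gamma\cap(\overline{R_k}\times\R)$ is the graph of $g_k:=f|\overline{R_k}$; each $\Gamma_k$ is closed in $\Gamma$ and $g_k$ has a closed graph. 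By Fact~\ref{clo_lc}, applied $N-1$ times, it suffices to show that every $\Gamma_k$ is locally connected.

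Fix $k$, put $R=R_k$, $C=\partial R$ and $g=g_k$, and note $D(g|C)\subseteq D(f|D_f)$ is finite. At a point $(u,g(u))$ with $u\in R$ the projection is a homeomorphism of a neighbourhood in $\Gamma_k$ onto a neighbourhood of $u$ in the open set $R$, so $\Gamma_k$ is locally connected there. Let now $u\in C$. By Theorem~\ref{schonflies} the point $u$ has a neighbourhood basis of open sets $V$ such that $C\cap V$ is a single sub-arc of $C$ through $u$ and $\overline{R}\cap V$ is connected. If $u\in R^f_0$ then $g$ is continuous at $u$ by Corollary~\ref{easy_con}; since $R^f_0$ is relatively open in $\overline{R}$ we may shrink $V$ so that $\overline{R}\cap V\subseteq R^f_0$ and $D(g|C)\cap V\subseteq\{u\}$, and then, again by Corollary~\ref{easy_con}, $g$ is continuous on all of $\overline{R}\cap V$; the graph of $g$ over $\overline{R}\cap V$ is then a connected neighbourhood of $(u,g(u))$ in $\Gamma_k$, and letting $V$ shrink gives local connectedness at $(u,g(u))$. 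If $u\in R^f_\infty$ then $u$ is locally connectedly accessible from $R$ (as $C\cong\Sph_1$), so $\lim_{R\ni v\to u}g(v)=+\infty$ by Theorem~\ref{exploding}; writing $C\cap V=\sigma_1\cup\sigma_2$ with $\sigma_1\cap\sigma_2=\{u\}$, for $V$ small $u$ is an isolated discontinuity of each $g|\sigma_j$, so either $g|\sigma_j$ is continuous at $u$ or $\lim_{\sigma_j\ni v\to u}g(v)=+\infty$ — this is Theorem~\ref{force_infinity_point} with domain $\sigma_j$, or, more elementarily, Theorem~\ref{fcon} together with a Darboux-type argument. Consequently, for $V$ small and any $\varepsilon>0$, the set $\Gamma_k\cap\bigl(V\times(g(u)-\varepsilon,g(u)+\varepsilon)\bigr)$ is the union of $\{(u,g(u))\}$ and the graphs of $g$ over those of $\sigma_1,\sigma_2$ along which $g$ is continuous at $u$ — at most two connected sets, each containing $(u,g(u))$ — and so is connected; letting $V,\varepsilon$ shrink gives local connectedness at $(u,g(u))$. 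Thus each $\Gamma_k$, and hence $\Gamma$, is locally connected.

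I expect the last paragraph to be the real work: near a boundary point $u$ of a complementary region $R$ one must combine the dichotomy ``$f$ stays bounded'' versus ``$f$ explodes'' governing the approach through $R$ (Corollary~\ref{easy_con} against Theorem~\ref{exploding}) with the behaviour of $f$ along the one-dimensional set $D_f$, where the finiteness from the first part makes the local discontinuity set discrete and the tameness of $C$ near $u$ supplied by Theorem~\ref{schonflies} keeps the local picture of $\Gamma_k$ under control.
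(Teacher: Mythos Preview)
Your argument is correct. The first part (finiteness of $D(f|D_f)$) is identical to the paper's. For local connectedness you and the paper both reduce, via Fact~\ref{clo_lc} and Corollary~\ref{circle_again}, to showing that each $\Gamma_k=\mathrm{graph}(f|\overline{R})$ is locally connected; from there the two arguments diverge. The paper applies Fact~\ref{clo_lc} once more, writing $\Gamma_k$ as the union of the graph of $f|R^f_0$ (locally connected because $f|\overline{R}$ is continuous on $R^f_0$ by Corollary~\ref{easy_con}, and closed in $\R^2\times[0,\infty)$ because any limit point over $R^f_\infty$ is ruled out by Corollary~\ref{col_exp}) and the graph of $f|\partial R$ (closed with finitely many discontinuities on a circle, hence locally connected). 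You instead verify local connectedness of $\Gamma_k$ point by point, splitting $u\in\partial R$ into the cases $u\in R^f_0$ and $u\in R^f_\infty$ and, in the latter, analysing the two boundary sub-arcs at $u$ separately. The paper's route is shorter and avoids the Sch\"onflies neighbourhood bookkeeping; your route is more explicit and makes visible exactly which small neighbourhoods in $\Gamma_k$ are connected.

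One minor point: Theorem~\ref{force_infinity_point} only produces \emph{some} semi-skyhook in $\sigma_j$, not the full one-sided limit $+\infty$; the dichotomy ``continuous or $\lim=+\infty$'' really comes from your Darboux-type remark (continuity of $g$ on $\sigma_j\setminus\{u\}$ plus the closed graph forces $\liminf=\limsup$), so you should cite that directly rather than Theorem~\ref{force_infinity_point}. Also, your final paragraph is commentary rather than proof and can be dropped.
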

\begin{proof}
By Corollary \ref{wrapper_open} and Theorem \ref{only_two}, $f|D_f$ has at most finitely many points of discontinuity.
Take any connected component $R$ of $\R^2\setminus D_f$. By Corollary \ref{circle_again}, $\partial R$ is homeomorphic with the unit circle. The local connectedness of the graph of $f|R^f_0$ is obvious (Corollary \ref{easy_con}). Note that by Corollary \ref{col_exp}, the graph of $f|R^f_0$ is closed. But since $f|D_f$ can have at most finitely many points of discontinuity, $f|\partial R$ has at most finitely many points of discontinuity. Since the graph of $f|\partial R$ is close, this implies that the graph of $f|\partial R$ is locally connected. So by Fact \ref{clo_lc}, the graph of $f|Clo(R)$ is locally connected. By Corollary \ref{circle_again}, closures of all (finitely many) connected components of $\R^2\setminus D_f$ sum up to the whole space $\R^2$. Then again by Fact \ref{clo_lc}, the graph of $f$ is locally connected.
\end{proof}

\begin{theorem}\label{discon}
If $f:\R^2\to[0,+\infty)$ has a closed graph and $D_f$ is a network, then the graph of $f$ is disconnected.
\end{theorem}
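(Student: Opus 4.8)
The plan is to argue by contradiction: suppose the graph of $f$ is connected. A network contains, by Definition \ref{network}, a subset homeomorphic with the unit circle, so $D_f\neq\emptyset$ and $f$ is not continuous; moreover $D_f$ is (by the Corollary after Definition \ref{network}) a locally connected continuum not separated by any point, hence compact, closed, and with empty interior. By Lemma \ref{almost_final} the graph of $f$ is locally connected and $f|D_f$ has at most finitely many discontinuity points. I would then split according to whether $f|D_f$ is discontinuous or continuous. If $f|D_f$ is discontinuous, Lemma \ref{too_many} applies directly — the graph is closed, connected (by assumption) and locally connected (Lemma \ref{almost_final}), $D_f$ is a non-empty locally connected continuum not separated by any point, and $f|D_f$ is discontinuous — so $f|D_f$ has infinitely many discontinuity points, contradicting Lemma \ref{almost_final}. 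Thus this case cannot occur.

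It remains to exclude the case that $f|D_f$ is continuous, and here I would obtain a contradiction by proving that $f$ is continuous on all of $\R^2$. Fix a connected component $R$ of the open set $\R^2\setminus D_f$. By Corollary \ref{circle_again} there are only finitely many such components and $\partial R$ is homeomorphic with the unit circle; a short argument (using that $R$ is open and relatively closed in the Jordan region cut out by $\partial R$, which is connected) shows $R$ equals the bounded, or the unbounded, complementary region of $\partial R$, so by the Corollary following Theorem \ref{schonflies} the set $\partial R$ is locally connectedly accessible from $R$. Since $\partial R\subset D_f$, the restriction $f|\partial R$ is continuous, so Theorem \ref{neither_or}, applied with $A=R\subset C_f$ and the connected set $E=\partial R$, yields $\partial R\subset R^f_0$ or $\partial R\subset R^f_\infty$.

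The subcase $\partial R\subset R^f_\infty$ is impossible. By Theorem \ref{exploding}, $\lim_{R\ni u\to x}f(u)=+\infty$ for every $x\in\partial R$; consequently the set $f\cap(R\times[0,+\infty))$ is not only open in the graph of $f$ but also closed in it, because a sequence in $R$ whose graph points converge in the graph of $f$ has a limit with a finite second coordinate, so its planar limit cannot lie on $\partial R$ and hence lies in $R$. This clopen subset of the graph is non-empty and proper (its complement contains $D_f$), contradicting connectedness of the graph. Therefore $\partial R\subset R^f_0$, so $R^f_\infty=\emptyset$, and by Corollary \ref{easy_con} the function $f|Clo(R)$ is continuous.

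Finally, as noted in the proof of Lemma \ref{almost_final}, the finitely many closed sets $Clo(R)$ (over all components $R$ of $\R^2\setminus D_f$) cover $\R^2$, and $f$ restricted to each is continuous; by the pasting lemma for a finite closed cover, $f$ is continuous on $\R^2$, contradicting $D_f\neq\emptyset$. Hence the assumption that the graph is connected is untenable, which is the claim. The step deserving the most care — indeed the only point that is not pure bookkeeping with earlier results — is the one in the third paragraph: leveraging Theorem \ref{exploding} to show that $f\cap(R\times[0,+\infty))$ is closed in the graph of $f$, which is what lets connectedness do the work.
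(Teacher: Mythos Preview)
Your argument is correct and follows essentially the same route as the paper's own proof: contradiction via Lemma \ref{almost_final} and Lemma \ref{too_many} to force $f|D_f$ continuous, then Theorem \ref{neither_or} on each component boundary, exclusion of the $R^f_\infty$ alternative by connectedness, and finally continuity of $f$ from Corollary \ref{easy_con} plus the finite closed cover of Corollary \ref{circle_again}. The only difference is cosmetic: you spell out in detail (via Theorem \ref{exploding} and a clopen-set argument) why $\partial R\subset R^f_\infty$ contradicts connectedness of the graph, and you justify explicitly that $R$ coincides with one of the two Jordan regions of $\partial R$ so that local connected accessibility applies --- points the paper leaves implicit.
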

\begin{proof}
Since $D_f$ is not empty, $f$ is discontinuous. Assume to the contrary that the graph of $f$ is connected. Then by Lemma \ref{almost_final}, the graph of $f$ is also locally connected and $f|D_f$ has at most finitely many points of discontinuity. So by Lemma \ref{too_many}, $f|D_f$ is continuous. Then for each connected component $R$ of $\R^2\setminus D_f$, by Theorem \ref{neither_or}, $\partial R \subset R^f_0$ or $\partial R \subset R^f_\infty$. But as we assumed, the graph of $f$ is connected, so only $\partial R \subset R^f_0$ is possible. But then, by Corollary \ref{easy_con}, $f|Clo(R)$ is continuous. This, by Corollary \ref{circle_again}, implies that $f$ is continuous.
\end{proof}

\section{Codomain generalisation}

We will prove that the result from the above section for $f:R^2\to [0,+\infty)$ can be easily extended to $f:R^2\to Y$, where $Y$ is a metrisable $\sigma$-locally compact space. To do this let me cite \cite[T1]{Wil}.

\begin{theorem}\label{wiliam}
Let $X$ be a metrisable topological space. Then $X$ is $\sigma$-locally compact, if and only if there exists a metric for which each bounded closed subset is compact. 
\end{theorem}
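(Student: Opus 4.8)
The plan is to prove the two implications separately; the substantive one is that $\sigma$-local compactness produces the desired metric, and the converse is a short observation. For the forward direction I would reduce everything to one clean task: \emph{produce a continuous proper map} $f\colon X\to[0,+\infty)$, ``proper'' meaning $f^{-1}([0,M])$ is compact for every $M$. Given such an $f$ and any compatible metric $\rho$ on $X$, put
\[
d(x,y)=\rho(x,y)+|f(x)-f(y)|.
\]
Then $d$ is a metric; it induces the original topology, since $\rho\le d$ while $\rho$-convergence forces $d$-convergence because $f$ is $\rho$-continuous; and it has the Heine--Borel property, because if $S$ is $d$-bounded with $d(x_0,x)\le R$ throughout $S$ then $f(S)\subset[0,f(x_0)+R]$, so $S\subset f^{-1}([0,f(x_0)+R])$, and a $d$-closed such $S$ is a closed subset of a compact set, hence compact. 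So the whole forward direction comes down to constructing $f$.

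For the converse, suppose $d$ is a compatible metric on $X$ under which closed bounded sets are compact. Fix $x_0\in X$. Each closed ball $\hat{B}(x_0,n)$ is closed and bounded, hence compact, and contains the open ball $B(x_0,n)\ni x_0$, so $X$ is locally compact; moreover $X=\bigcup_{n\in\N}\hat{B}(x_0,n)$ because $d$ is real-valued, so $X$ is $\sigma$-compact. A locally compact $\sigma$-compact space is in particular $\sigma$-locally compact.

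To build the proper $f$, I would first unpack ``metrisable and $\sigma$-locally compact'' into the form I can use: that $X$ carries a compact exhaustion, i.e. an increasing sequence of compact sets $K_1\subset K_2\subset\cdots$ with $K_n\subset\operatorname{int}K_{n+1}$ and $X=\bigcup_n K_n$. This is where the real strength of the hypothesis is spent -- such a space is locally compact \emph{and} $\sigma$-compact (a bare ``countable union of closed locally compact pieces'' is too weak, as an uncountable discrete space shows), and from $X=\bigcup_m C_m$ with $C_m$ compact one gets the exhaustion by repeatedly enlarging partial unions with finitely many relatively compact open sets. With the exhaustion fixed, Urysohn's lemma in the normal space $X$ yields continuous $g_n\colon X\to[0,1]$ with $g_n\equiv 0$ on $K_n$ and $g_n\equiv 1$ on $X\setminus\operatorname{int}K_{n+1}$; set $f=\sum_{n\ge 1}g_n$. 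On $\operatorname{int}K_{m+1}$ only $g_1,\dots,g_m$ are nonzero, so $f$ is a locally finite sum of continuous functions, hence continuous and finite-valued; and $x\notin K_{m+1}$ forces $g_1(x)=\cdots=g_m(x)=1$, so $f(x)\ge m$, whence $f^{-1}([0,M])\subset K_{m+1}$ as soon as $m>M$ and, being closed, is compact. Thus $f$ is continuous and proper, and by the reduction above we are done.

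The hard part, as this makes clear, is not the Urysohn construction -- routine once a compact exhaustion is in hand -- but pinning down exactly how much compactness is encoded in ``$\sigma$-locally compact'' for a metrisable space, that is, that it delivers such an exhaustion (equivalently, local compactness together with $\sigma$-compactness). I would take this from the standard structure theory of locally compact metrisable spaces, or else simply follow the construction in \cite{Wil}. The remaining items -- that $d$ is a compatible metric and that $f$ is finite-valued and proper -- are all short verifications.
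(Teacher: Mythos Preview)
The paper does not prove this theorem at all; it is simply quoted from \cite{Wil}. Your outline is essentially the Williamson--Janos construction: build a continuous proper map $f\colon X\to[0,\infty)$ from a compact exhaustion via Urysohn functions, then set $d(x,y)=\rho(x,y)+|f(x)-f(y)|$ for any compatible metric $\rho$. The short verifications you give (that $d$ is a compatible metric with the Heine--Borel property, that $f$ is finite-valued and proper, and the converse direction via closed balls) are all correct. The one substantive point you rightly isolate and defer to \cite{Wil}---that a metrisable ``$\sigma$-locally compact'' space admits a compact exhaustion---does depend on the intended definition: under the bare reading ``countable union of closed locally compact subspaces'' the forward implication is false (your uncountable discrete example), so the hypothesis must be read as locally compact together with $\sigma$-compact, which is the sense in which \cite{Wil} proves it and under which the exhaustion is routine.
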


We will prove the following generalisation.

\begin{theorem}\label{di_gen}
If $X$ is a Hausdorff topological space, $Y$ is a metrlisable $\sigma$-locally compact space, $f:X\to Y$ has a closed graph, then there exists a closed graph function $\hat{f}:X\to [0,+\infty)$ such that $D(f) = D(\hat{f})$ and the graph of $\hat{f}$ is homeomorphic with the graph of $f$.   
\end{theorem}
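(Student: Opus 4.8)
The idea is to build $\hat f$ by post-composing $f$ with a suitable continuous, closed-graph embedding $\Phi\colon Y\to[0,+\infty)$, and then argue that such a map transports the closed-graph property of $f$ into a closed-graph property of $\hat f=\Phi\circ f$ while preserving discontinuity points and the homeomorphism type of the graph. First I would invoke Theorem \ref{wiliam} to re-metrise $Y$ by a metric $d$ in which every bounded closed subset is compact; fix a basepoint $y_0\in Y$ (if $Y=\emptyset$ there is nothing to prove, and if $Y$ is a single point the claim is trivial) and consider the distance function $\rho(y)=d(y,y_0)$, which is continuous and proper in the sense that $\rho^{-1}([0,M])$ is compact for every $M$. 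The map $\rho$ is not injective, so by itself it cannot give a homeomorphism of graphs; the remedy is to use the \emph{pair} $(\mathrm{id}_Y,\rho)$, i.e. to work with the graph map $y\mapsto(y,\rho(y))$ and then re-encode that into a single nonnegative real coordinate.

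Concretely, the plan is: (i) show that the graph $G_f=\{(x,f(x)):x\in X\}\subset X\times Y$ is homeomorphic to $\tilde G=\{(x,f(x),\rho(f(x)))\}\subset X\times Y\times[0,+\infty)$ via the obvious projection, and that this passage changes nothing; (ii) observe that the set $\Gamma=\{(y,\rho(y)):y\in Y\}\subset Y\times[0,+\infty)$ is closed (it is the graph of the continuous $\rho$) and that $\mathrm{pr}_2\colon\Gamma\to[0,+\infty)$ restricted to $\Gamma$ is a proper continuous bijection-free map whose point-preimages are the spheres $S(y_0,r)$; (iii) use $\sigma$-local compactness more carefully: cover $Y$ by countably many sets $K_n$ with compact closure, and build an embedding $e\colon Y\hookrightarrow[0,1)$ composed with the bookkeeping coordinate $\rho$ to separate points — i.e. define $\hat f(x)=\rho(f(x))+\varphi(e(f(x)))$ where $\varphi$ encodes $e$ into a "lower-order" perturbation that never interferes with properness. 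The cleanest implementation: let $\hat f(x)=\rho(f(x))+\dfrac{e(f(x))}{1+\rho(f(x))^2}$ or, more robustly, interleave digits; the point is to get a single map $\Phi\colon Y\to[0,+\infty)$ that is continuous, \emph{injective}, has closed graph, and is proper onto its image. Then set $\hat f=\Phi\circ f$.

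With $\Phi$ in hand, the verifications are: $D(\hat f)=D(f)$ because $\Phi$ is a homeomorphism onto its image (injectivity plus continuity; for the inverse being continuous one uses properness of $\Phi$ together with $Y$ being, by construction, a countable union of compacta on which $\Phi$ is a closed map), so $f$ is continuous at $x$ iff $\Phi\circ f$ is; the graph of $\hat f$ is homeomorphic to the graph of $f$ via $(x,f(x))\mapsto(x,\Phi(f(x)))$, whose inverse is $(x,t)\mapsto(x,\Phi^{-1}(t))$ and is continuous on the graph because $\Phi^{-1}$ is continuous on $\Phi(Y)$; and $\hat f$ has closed graph because if $(x_\alpha,\hat f(x_\alpha))\to(x,t)$ in $X\times[0,+\infty)$, then $\{t\}\cup\{\hat f(x_\alpha)\}$ is bounded, hence $\{f(x_\alpha)\}$ eventually lies in the compact set $\Phi^{-1}([0,t+1])$, so a subnet of $f(x_\alpha)$ converges to some $y$ with $(x,y)\in G_f$ (closedness of $G_f$), giving $f(x)=y$ and $t=\Phi(y)=\hat f(x)$.

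\textbf{Main obstacle.} The delicate point is constructing $\Phi$ so that it is simultaneously \emph{injective}, \emph{continuous}, \emph{proper onto its image}, and a \emph{closed map onto its image} (equivalently $\Phi^{-1}$ continuous). A bare proper continuous map need not be injective, and a bare injective continuous map into $\R$ need not be an embedding of a non-compact space; reconciling the two while keeping values in $[0,+\infty)$ is where the real work lies. I expect to handle it by writing $Y=\bigcup_n L_n$ with $L_n$ open, $\clo{L_n}$ compact, $\clo{L_n}\subset L_{n+1}$, choosing on each compact annulus $\clo{L_{n}}\setminus L_{n-1}$ an embedding into a bounded interval (compact metric spaces embed in the Hilbert cube, but we need $\R$; use instead that we only need to separate points of $Y$, and a countable family of continuous $[0,1]$-valued functions separating points exists by metrisability, then combine one such function with $\rho$ to get injectivity since $\rho$ already separates different spheres), and gluing these with weights decaying fast enough (relative to $n$) that the sum converges to a continuous function whose sublevel sets remain compact because the dominant term $\rho$ controls them. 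The bookkeeping to check that this combination is injective and that $\Phi^{-1}$ is continuous on $\Phi(Y)$ — using that on each $\clo{L_n}$, $\Phi$ is a continuous injection from a compact Hausdorff space, hence an embedding, and these patch — is the technical heart of the proof; everything else is formal.
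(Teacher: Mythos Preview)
Your plan contains a genuine gap: you try to construct an \emph{injective} continuous proper map $\Phi\colon Y\to[0,+\infty)$, but no such map exists in general. Take $Y=\R^2$ (metrisable, $\sigma$-locally compact): an injective continuous $\Phi\colon\R^2\to\R$ is impossible by invariance of domain (or simply because $\R^2$ minus a point is connected while an interval minus an interior point is not). Your attempted ``perturbation of $\rho$ by an embedding $e\colon Y\hookrightarrow[0,1)$'' does not escape this, since such an $e$ itself need not exist; and ``countably many $[0,1]$-valued separating functions'' gives you an embedding into the Hilbert cube, not into $\R$.

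The misconception that leads you there is the sentence ``$\rho$ is not injective, so by itself it cannot give a homeomorphism of graphs.'' That inference is wrong. The paper simply takes $\hat f(x)=d(y_0,f(x))=\rho(f(x))$ with no correction term. The graph map $H\colon(x,f(x))\mapsto(x,\rho(f(x)))$ is automatically a bijection because the first coordinate already determines the point of $G_f$; injectivity of $\rho$ is irrelevant. Continuity of $H$ is clear since $\rho$ is continuous. Continuity of $H^{-1}$ is exactly the argument you wrote out in your last paragraph and which uses only properness of $\rho$ (i.e.\ $\rho^{-1}([0,M])$ compact) together with the closed graph of $f$: if $(x_\alpha,\rho(f(x_\alpha)))\to(x,t)$ in $G_{\hat f}$, then $f(x_\alpha)$ is eventually in the compact ball $\hat B(y_0,t+1)$, and the closed-graph subnet argument forces $f(x_\alpha)\to f(x)$. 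The same reasoning gives $D(f)=D(\hat f)$ and that $\hat f$ has closed graph. So drop the embedding idea entirely; $\rho$ alone already does the job, and you have in fact already written the key computation.
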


\begin{proof}
By Theorem \ref{wiliam}, for $Y$ we can choose such a metric $d$ for which each bounded and closed subset of $Y$ is compact. Choose some $y_0\in Y$. Let $\hat{f}(x) = d(y_0,f(x))$.
First we will show that the graph of $\hat{f}$ is closed. Take any $x\in X$ and net $x_\alpha$ such that $(x_\alpha, \hat{f}(x_\alpha))\to (x,\hat{y})$. Since the graph of $f$ is closed and $f(x_\alpha)$ without perhaps some initial set of indices is bounded, $f(x_\alpha)\to f(x)$. Thus $\hat{f}(x_\alpha)\to \hat{f}(x)$. We showed that $\hat{f}$ has a closed graph.\\
It's trivial that if $f$ is continuous at $x$, then $\hat{f}$ is also continuous at $x$.\\
Assume that $\hat{f}$ is continuous at $x$. Take any net $x_\alpha\to x$. We have $\hat{f}(x_\alpha)\to\hat{f}(x)$. Then again since the graph of $f$ is closed and $f(x_\alpha)$ without perhaps some initial set of indices is bounded, we have $f(x_\alpha)\to f(x)$. So, we showed that $D(f) = D(\hat{f})$.\\
Let $H:f\to \hat{f}$ where $H(x,f(x)) = (x, \hat{f}(x))$. By the reasoning similar to the above it can be easily shown that $H$ is continuous and its inverse is also continuous, so $H$ is a homeomorphism.
\end{proof}

Now by applying Theorem \ref{di_gen} to Theorem \ref{discon} we can formulate the following corollary. 

\begin{corollary}\label{big_hit}
If $Y$ is a metrisable $\sigma$-locally compact space, $f:\R^2\to Y$ has a closed graph and $D_f$ is a network, then the graph of $f$ is disconnected.
\end{corollary}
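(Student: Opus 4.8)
The plan is to deduce the corollary from Theorem~\ref{discon} by replacing $f$ with the non-negative real-valued function constructed in Theorem~\ref{di_gen}. First I would apply Theorem~\ref{di_gen}: since $\R^2$ is Hausdorff and $Y$ is metrisable and $\sigma$-locally compact, there exists a function $\hat f:\R^2\to[0,+\infty)$ with a closed graph such that $D(f)=D(\hat f)$ and the graph of $\hat f$ is homeomorphic with the graph of $f$. (Concretely, $\hat f=d(y_0,f(\cdot))$ for a suitable metric $d$ and a fixed $y_0\in Y$, but for the corollary only the three listed properties matter.)

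Next I would transport the hypothesis on discontinuity sets. Because $D(\hat f)=D(f)$, the set of discontinuity points of $\hat f$ is precisely the network $D_f\subset\R^2$. Thus $\hat f$ meets every hypothesis of Theorem~\ref{discon} — it has a closed graph, its codomain is $[0,+\infty)$, and its discontinuity set is a network — so by Theorem~\ref{discon} the graph of $\hat f$ is disconnected.

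Finally I would invoke topological invariance of disconnectedness: since the graph of $f$ is homeomorphic with the graph of $\hat f$, and the latter is disconnected, the graph of $f$ is disconnected as well. This finishes the proof.

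I do not expect a genuine obstacle here: the entire content of the corollary is carried by Theorem~\ref{di_gen}, and the remaining steps are a verbatim transfer of hypotheses and an appeal to a topological invariant. The only points that warrant a moment's care are that $\hat f$ really takes values in $[0,+\infty)$ (immediate, as a metric is non-negative) and that ``$D_f$ is a network'' is a statement purely about the subset $D_f$ of $\R^2$, hence unaffected by the change from $f$ to $\hat f$ once $D(f)=D(\hat f)$ is known.
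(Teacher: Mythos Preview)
Your proof is correct and follows exactly the approach the paper indicates: it simply applies Theorem~\ref{di_gen} to obtain $\hat f$ and then invokes Theorem~\ref{discon}, with disconnectedness transferred back along the homeomorphism of graphs. The paper does not spell out any details beyond ``applying Theorem~\ref{di_gen} to Theorem~\ref{discon}'', so your write-up is in fact more explicit than the original.
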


\end{document}